\newcommand{\vertiii}[1]{{\left\vert\kern-0.25ex\left\vert\kern-0.25ex\left\vert #1
		\right\vert\kern-0.25ex\right\vert\kern-0.25ex\right\vert}}
\theoremstyle{plain}
\newtheorem{Thm}{Theorem}[section]
\newtheorem{Prop}[Thm]{Proposition}
\newtheorem{Lem}[Thm]{Lemma}
\newtheorem{Cor}[Thm]{Corollary}
\newtheorem*{Main results}{Main results about the gauge}
\theoremstyle{definition}
\newtheorem{Def}[Thm]{Definition}
\newtheorem{Rmk}[Thm]{Remark}
\renewcommand{\epsilon}{\varepsilon}
\title{Noncommutative Ergodic Optimization and Unique Ergodicity}
\author{Aidan Young$^1$}
\address{University of North Carolina at Chapel Hill}
\email{$^1$\url{aidanjy@live.unc.edu}}
\begin{document}
	
	\maketitle
	
	\begin{abstract}
		We extend the theory of ergodic optimization and maximizing measures to the non-commutative field of C*-dynamical systems. We then employ this ergodic optimization machinery to provide an alternate characterization of unique erogdicity of C*-dynamical systems when the resident group action satisfies certain Choquet-theoretic assumptions.
	\end{abstract}
	
	One of the guiding questions of the field of ergodic optimization is the following: Given a topological dynamical system $(X, G, U)$, and a real-valued continuous function $f \in C(X)$, what values can $\int f \mathrm{d} \mu$ take when $\mu$ is an invariant Borel probability measure on $X$, and in particular, what are the extreme values it can take? In a joint work with I. Assani \cite[Section 3]{Assani-Young}, we noticed that the field of ergodic optimization was relevant to the study of certain differentiation problems dubbed \emph{spatial-temporal differentiation problems}. Hoping to extend these tools to the study of spatial-temporal differentiation problems in the setting of operator-algebraic dynamical systems, this paper develops an operator-algebraic formalization of this question of ergodic optimization, re-interpreting it as a question about the values of invariant states on a C*-dynamical system. This framework is then applied to provide a characterization of certain uniquely ergodic C*-dynamical systems with respect to ergodic optimizations.
	
	Section \ref{Ergodic optimization} develops the theory of ergodic optimization in the context of C*-dynamical systems, where the role of "maximizing measures" is instead played by invariant states on a C*-algebra. The framework we adopt is in fact somewhat more general than the classical framework of maximizing measures, even in the case where the underlying C*-algebra is commutative; however, the classical theory of ergodic optimization is still contained as a special case of our framework. We also demonstrate that some of the basic results of that classical theory of ergodic optimization extend to the C*-dynamical setting.
	
	In Section \ref{Singly generated UE}, we define a functional called the \emph{gauge} of a singly generated C*-dynamical system, a non-commutative generalization of the functional of the same name defined in \cite{Assani-Young}, and describe its connections to questions of ergodic optimization, as well as the ways in which it can be used to "detect" the unique ergodicity of C*-dynamical systems under certain Choquet-theoretic assumptions.
	
	In Section \ref{Amenable UE}, we extend the results of the previous section to the case where the phase group is a countable discrete amenable group. We also provide a characterization of uniquely ergodic C*-dynamical systems of countable discrete amenable groups, and in particular provide a new characterization of uniquely ergodic \emph{topological} dynamical systems of countable discrete amenable groups in terms of the convergence behaviors of ergodic averages.
	
	\section{Ergodic Optimization in C*-Dynamical Systems}\label{Ergodic optimization}
	
	Given a unital C*-algebra $\mathfrak{A}$, let $\operatorname{Aut}(\mathfrak{A})$ denote the family of all *-automorphisms of $\mathfrak{A}$. We endow $\operatorname{Aut}(\mathfrak{A})$ with the \emph{point-norm topology}, i.e. the topology induced by the pseudometrics
	\begin{align*}
		(\Phi, \Psi)	& \mapsto \left\| \Phi(a) - \Psi(a) \right\|	& (a \in \mathfrak{A}) .
	\end{align*}
	This topology makes $\operatorname{Aut}(\mathfrak{A})$ a topological group \cite[II.5.5.4]{Blackadar}.
	
	We define a \emph{C*-dynamical system} to be a triple $(\mathfrak{A}, G, \Theta)$ consisting of a C*-algebra $\mathfrak{A}$, a topological group $G$ (called the \emph{phase group}), and a point-continuous group action $\Theta : G \to \operatorname{Aut}(\mathfrak{A})$.
	
	Denote by $\mathcal{S}$ the family of all states on $\mathfrak{A}$ endowed with the weak*-topology, and by $\mathcal{T}$ the subfamily of all tracial states on $\mathfrak{A}$. A state $\phi$ on $\mathfrak{A}$ is called \emph{$\Theta$-invariant} (or simply \emph{invariant} if the action $\Theta$ is understood in context) if $\phi = \phi \circ \Theta_g$ for all $g \in G$. Denote by $\mathcal{S}^G \subseteq \mathcal{S}$ the family of all $\Theta$-invariant states on $\mathfrak{A}$, and by $\mathcal{T}^G \subseteq \mathcal{T}$ the family of all $\Theta$-invariant tracial states on $\mathfrak{A}$. The set $\mathcal{S}^G$ (resp. $\mathcal{T}^G$) is weak*-closed in $\mathcal{S}$ (resp. in $\mathcal{T}$). Unless otherwise stated, whenever we deal with subspaces of $\mathcal{S}$, we consider these subspaces in the weak*-topology.
	
	We will assume for the remainder of this section that $(\mathfrak{A}, G, \Theta)$ is a C*-dynamical system such that $\mathfrak{A}$ is separable, and also that $\mathcal{S}^G \neq \emptyset$. The separability of $\mathfrak{A}$ means that $\mathcal{S}$ is compact metrizable, and the assumption that $\mathcal{S}^G \neq \emptyset$ means that we can speak non-vacuously of invariant states. These assumptions are not especially burdensome. If the phase group $G$ is countable, discrete, and amenable, then we automatically have that $\mathcal{S}^G \neq \emptyset$ (see Theorem \ref{K-B}). In particular, this framework will include every system of the form $(C(Y), G, \Theta)$, where $Y$ is a compact metrizable topological space, the group $G$ is countable discrete amenable, and $\Theta_g : f \mapsto f \circ U_g$ for all $g \in G$, where $U_g$ is a homeomorphism of $Y$. More generally, if $(\mathfrak{M}, \rho, G, \Xi)$ is a W*-dynamical system with $G$ amenable, and $\mathcal{L}^2(\mathfrak{M}, \rho)$ is separable, and $(\mathfrak{A}, G, \Theta; \iota)$ is a separable C*-model of $(\mathfrak{M}, \rho, G, \Xi)$ (see Section \ref{Singly generated UE} for definitions), then $(\mathfrak{A}, G, \Theta)$ will also satisfy these hypotheses. However, both of these assumptions, i.e. that $\mathfrak{A}$ is separable and that $\mathcal{S}^G \neq \emptyset$, can fail even when $\mathfrak{A}$ is abelian.
	
	Before proceeding, we prove the following Krylov–Bogolyubov-type theorem.
	
	\begin{Thm}[Krylov–Bogolyubov Theorem for C*-dynamical systems]\label{K-B}
		Let $(\mathfrak{A}, G, \Theta)$ be a C*-dynamical system, and let $G$ be an amenable group. Let $K$ be a nonempty weak*-compact convex subset of $\mathcal{S}$ such that if $\psi \in K$, then $\psi \circ \Theta_g \in K$ for all $g \in G$. Then $K \cap \mathcal{S}^G \neq \emptyset$.
	\end{Thm}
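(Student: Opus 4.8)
The plan is to fix an arbitrary $\psi \in K$, to average $\psi$ over its orbit $\{\psi \circ \Theta_g : g \in G\}$ under the dual action by means of an invariant mean on $G$, and then to exploit the convexity and weak*-closedness of $K$ to guarantee that the resulting averaged state again lies in $K$. Since $G$ is amenable it carries a mean $m$ that is invariant under right translation — on $\ell^\infty(G)$ when $G$ is discrete, and in general on the space of bounded right-uniformly continuous functions on $G$, on which the functions below will be seen to lie. For each $a \in \mathfrak{A}$ let $F_a \colon G \to \mathbb{C}$ be given by $F_a(g) = \psi(\Theta_g(a))$. Then $F_a$ is bounded by $\|a\|$, it is continuous because $\Theta$ is point-continuous and $\psi$ is norm-continuous, and it is right-uniformly continuous because $\sup_{g \in G} |F_a(gs) - F_a(g)| \le \|\Theta_s(a) - a\| \to 0$ as $s \to e$; hence $m(F_a)$ is defined, and I would set $\phi(a) := m(F_a)$.

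First I would verify that $\phi$ is a $\Theta$-invariant state. Linearity is immediate from the linearity of $m$ and of $a \mapsto F_a$; we have $\phi(1) = m(1) = 1$; and $\phi(a^*a) = m(F_{a^*a}) \ge 0$ since $F_{a^*a}(g) = \psi(\Theta_g(a)^* \Theta_g(a)) \ge 0$ for all $g$ and $m$ is positive. Thus $\phi \in \mathcal{S}$. For invariance, observe that $F_{\Theta_h(a)}(g) = \psi(\Theta_g \Theta_h(a)) = \psi(\Theta_{gh}(a)) = F_a(gh)$, so $F_{\Theta_h(a)}$ is the right translate of $F_a$ by $h$, and right-invariance of $m$ yields $\phi(\Theta_h(a)) = \phi(a)$ for all $h \in G$ and $a \in \mathfrak{A}$. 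Hence $\phi \in \mathcal{S}^G$.

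The one genuinely substantive point — and the step I expect to be the main obstacle — is to show that $\phi$ lies in $K$ and not merely in $\mathcal{S}^G$. Here I would invoke the fact that the finitely supported probability measures on $G$, viewed as means, are weak*-dense in the set of all means (this follows from the Krein–Milman theorem together with the density of the point masses $\{\delta_g : g \in G\}$ among the characters of $\ell^\infty(G)$, or directly from a Hahn–Banach separation argument). Choosing a net of finite convex combinations $m_\lambda = \sum_j t_j^{(\lambda)} \delta_{g_j^{(\lambda)}}$ with $m_\lambda \to m$ weak*, one obtains for every $a \in \mathfrak{A}$
\[
\phi(a) = m(F_a) = \lim_\lambda m_\lambda(F_a) = \lim_\lambda \sum_j t_j^{(\lambda)}\, \psi\bigl(\Theta_{g_j^{(\lambda)}}(a)\bigr) = \lim_\lambda \phi_\lambda(a), \qquad \phi_\lambda := \sum_j t_j^{(\lambda)}\, \psi \circ \Theta_{g_j^{(\lambda)}} .
\]
By hypothesis each $\psi \circ \Theta_g$ belongs to $K$, so each $\phi_\lambda$ belongs to $K$ by convexity, and since $K$ is weak*-closed its weak*-limit $\phi$ belongs to $K$ as well. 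Therefore $\phi \in K \cap \mathcal{S}^G$, which is exactly the claim.

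I would close with a remark that this argument is really an instance of Day's fixed-point theorem applied to the affine, separately weak*-continuous action $g \cdot \psi := \psi \circ \Theta_{g^{-1}}$ of the amenable group $G$ on the nonempty weak*-compact convex set $K$; the explicit mean construction above is essentially a proof of that special case. When $G$ is discrete — the case relevant to Sections \ref{Singly generated UE} and \ref{Amenable UE} — the uniform-continuity considerations become vacuous, and one may equivalently take $\phi$ to be a weak*-cluster point of the Følner averages $\frac{1}{|F_i|} \sum_{g \in F_i} \psi \circ \Theta_g$, with the Følner condition forcing $\phi \circ \Theta_h = \phi$.
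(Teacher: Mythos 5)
Your proof is correct, but it takes a genuinely different route from the paper's. The paper fixes $\psi \in K$, forms the F\o{}lner averages $\psi_k = \frac{1}{|F_k|}\sum_{g \in F_k} \psi \circ \Theta_g$ (which lie in $K$ by convexity), extracts a weak*-convergent subsequence, and verifies invariance of the limit directly from the F\o{}lner condition $|g_0 F_k \Delta F_k|/|F_k| \to 0$ --- i.e.\ exactly the argument you relegate to your closing remark about the discrete case. Your primary argument instead builds the invariant state in one step as $\phi(a) = m(F_a)$ for an invariant mean $m$, and then recovers membership in $K$ via the weak*-density of finitely supported probability measures among means. Each approach has its advantages. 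Yours is cleaner on two points: it needs no compactness extraction at all (the paper's passage to a convergent \emph{subsequence} tacitly uses metrizability of $\mathcal{S}$, which the paper elsewhere claims is not needed for this theorem --- strictly one should take a subnet there), and it applies verbatim to any amenable topological group once the right-uniform-continuity of $g \mapsto \psi(\Theta_g a)$ is checked, whereas a F\o{}lner sequence presupposes more about $G$ than the theorem's statement does. The paper's approach is more elementary --- it avoids invoking the density of finitely supported means, which is the one nontrivial external fact your argument leans on --- and it is the version that integrates with the rest of the paper, where the same F\o{}lner-averaging computation reappears in the construction of the gauge (Theorems \ref{Singly generated gauge} and \ref{Gauge exists for C*-dynamical systems}). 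The substantive content is the same in both cases: averaging the orbit of $\psi$ and using convexity and closedness of $K$ to keep the limit inside $K$.
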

	
	\begin{proof}
		First, fix a state $\psi \in K$, and let $(F_k)_{k = 1}^\infty$ be a left Følner sequence for $G$. Let
		$$\psi_k = \frac{1}{|F_k|} \sum_{g \in F_k} \psi \circ \Theta_g .$$
		Then each $\psi \circ \Theta_g$ is in $K$, and since $K$ is convex, it follows that $\frac{1}{|F_k|} \sum_{g \in F_k} \psi \circ \Theta_g \in K$. Let $(\psi_{k_n})_{n = 1}^\infty$ be a sub-sequence converging in the weak*-topology to some $\phi$. Then $\phi \in K$, since $K$ is compact.
		
		Now, we prove that $\phi \in \mathcal{S}^G$. Fix some $g_0 \in G, x \in \mathfrak{A}$. Then
		\begin{align*}
			\left| \phi \left( \Theta_{g_0} x \right) - \phi(x) \right|	& = \lim_{n \to \infty} \left| \psi_{k_n} \left( \Theta_{g_0} x \right) - \psi_{k_n} (x) \right| \\
			& = \lim_{n \to \infty} \frac{1}{|F_{k_n}|} \left| \left( \sum_{g \in F_{k_n}} \psi(\Theta_{g_0 g} x) \right) - \left( \sum_{g \in F_{k_n}} \psi \left( \Theta_{g} x \right) \right) \right| \\
			& = \lim_{n \to \infty} \frac{1}{|F_{k_n}|} \left| \left( \sum_{g \in g_0 F_{k_n}} \psi(\Theta_{g} x) \right) - \left( \sum_{g \in F_{k_n}} \psi \left( \Theta_{g} x \right) \right) \right| \\
			& = \lim_{n \to \infty} \frac{1}{|F_{k_n}|} \left| \left( \sum_{g \in g_0 F_{k_n} \setminus F_{k_n}} \psi(\Theta_{g} x) \right) - \left( \sum_{g \in F_{k_n} \setminus g_0 F_{k_n}} \psi \left( \Theta_{g} x \right) \right) \right| \\
			& \leq \limsup_{n \to \infty} \left(  \frac{1}{|F_{k_n}|} \left| \sum_{g \in g_0 F_{k_n}\setminus F_{k_n}} \psi(\Theta_{g} x) \right| + \frac{1}{|F_{k_n}|} \left| \sum_{g \in F_{k_n} \setminus g_0 F_{k_n}} \psi \left( \Theta_{g} x \right) \right| \right) \\
			& \leq \limsup_{n \to \infty} \frac{|g_0 F_{k_n}\Delta F_{k_n}|}{|F_{k_n}|} \| x \| \\
			& = 0 .
		\end{align*}
		Therefore $\phi \in K \cap \mathcal{S}^G$.
	\end{proof}

	Although our manner of proof of Theorem \ref{K-B} is scarcely novel, the result as we have stated it here can be used to ensure the existence of invariant states with specific properties that might interest us, as seen for example in Corollary \ref{Tracial K-B} and Proposition \ref{Nonempty annihilators}. Our standing hypothesis that $\mathfrak{A}$ be separable is not necessary for this proof of Theorem \ref{K-B}.
	
	\begin{Cor}\label{Tracial K-B}
		If $\mathcal{T} \neq \emptyset$, and $G$ is amenable, then $\mathcal{T}^G \neq \emptyset$.
	\end{Cor}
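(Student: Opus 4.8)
The plan is to deduce Corollary \ref{Tracial K-B} directly from Theorem \ref{K-B} by choosing the set $K$ appropriately. The natural candidate is $K = \mathcal{T}$, the set of all tracial states on $\mathfrak{A}$. Since $\mathcal{T} \neq \emptyset$ by hypothesis, and $G$ is amenable by hypothesis, the only things left to verify are that $\mathcal{T}$ is a nonempty weak*-compact convex subset of $\mathcal{S}$, and that it is invariant under precomposition with each $\Theta_g$; then Theorem \ref{K-B} yields $\mathcal{T} \cap \mathcal{S}^G \neq \emptyset$, and since $\mathcal{T} \cap \mathcal{S}^G = \mathcal{T}^G$ by definition, we are done.

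First I would recall that $\mathcal{T}$ is convex (a convex combination of tracial states is tracial, since the trace identity $\phi(ab) = \phi(ba)$ is preserved under convex combinations) and weak*-closed in $\mathcal{S}$: if $\phi_\lambda \to \phi$ weak* with each $\phi_\lambda$ tracial, then for any $a, b \in \mathfrak{A}$ we have $\phi(ab) = \lim \phi_\lambda(ab) = \lim \phi_\lambda(ba) = \phi(ba)$. Since $\mathcal{S}$ is weak*-compact (the state space of a unital C*-algebra is weak*-compact by Banach–Alaoglu) and $\mathcal{T}$ is weak*-closed, $\mathcal{T}$ is itself weak*-compact. This is already noted in the text preceding the statement, where $\mathcal{T}$ is introduced and described as weak*-closed in $\mathcal{S}$, so I would simply cite that.

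Next I would check the invariance condition: if $\psi \in \mathcal{T}$ and $g \in G$, then $\psi \circ \Theta_g \in \mathcal{T}$. Indeed $\psi \circ \Theta_g$ is clearly a state (it is positive, linear, and unital since $\Theta_g$ is a unital *-homomorphism), and for $a, b \in \mathfrak{A}$ we have $(\psi \circ \Theta_g)(ab) = \psi(\Theta_g(a) \Theta_g(b)) = \psi(\Theta_g(b)\Theta_g(a)) = (\psi \circ \Theta_g)(ba)$, using that $\Theta_g$ is multiplicative and that $\psi$ is tracial. So $\mathcal{T}$ satisfies all the hypotheses imposed on $K$ in Theorem \ref{K-B}.

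There is no real obstacle here; the only mild subtlety is that Theorem \ref{K-B} was proved without the standing separability hypothesis on $\mathfrak{A}$ (as the remark following its proof points out), so this corollary holds in that generality as well — no metrizability of $\mathcal{S}$ is needed, since the Følner-averaging argument in the proof of Theorem \ref{K-B} only uses weak*-compactness of $K$, not sequential compactness. I would phrase the proof in two or three lines: apply Theorem \ref{K-B} with $K = \mathcal{T}$, noting the three required properties (nonempty by hypothesis, weak*-compact and convex as just discussed, $\Theta_g$-invariant by the trace-identity computation), and conclude $\mathcal{T}^G = \mathcal{T} \cap \mathcal{S}^G \neq \emptyset$.
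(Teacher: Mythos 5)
Your proposal is correct and takes essentially the same route as the paper: apply Theorem \ref{K-B} with $K = \mathcal{T}$, verifying that $\mathcal{T}$ is weak*-compact, convex, and stable under precomposition with each $\Theta_g$ via the same trace-identity computation $\psi(\Theta_g(x)\Theta_g(y)) = \psi(\Theta_g(y)\Theta_g(x))$. Your additional remarks on compactness and the dispensability of separability are accurate but not needed beyond what the paper already records.
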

	
	\begin{proof}
		If $\phi \in \mathcal{T}$, and $x, y \in \mathfrak{A}, g \in G$, then
		\begin{align*}
			\phi(\Theta_g(x y))	& = \phi ((\Theta_g x )(\Theta_g y)) \\
			& = \phi ((\Theta_g y )(\Theta_g x)) \\
			& = \phi(\Theta_g(y x)) .
		\end{align*}
		Therefore $\phi \circ \Theta_g \in \mathcal{T}$. Thus $K = \mathcal{T}$ satisfies the hypotheses of Theorem \ref{K-B}.
	\end{proof}
	
	\begin{Def}
		We denote by $\mathfrak{R}$ the real Banach space of all self-adjoint elements of $\mathfrak{A}$, and denote by $\mathfrak{R}^\natural$ the space of all real self-adjoint bounded linear functionals on $\mathfrak{A}$.
	\end{Def}
	
	\begin{Def}\label{Simplex definition}
		Let $V$ be a locally convex topological real vector space, and let $K$ be a compact subset of $V$ which is contained in a hyperplane that does not contain the origin. We call $K$ a \emph{simplex} if the positive cone $P = \left\{ c k : c \in \mathbb{R}_{\geq 0}, k \in K \right\}$ defines a lattice ordering on $P - P = \{ p_1 - p_2 : p_1, p_2 \in P \} \subseteq V$ with respect to the partial order $a \leq b \iff b - a \in P$.
	\end{Def}
	
	\begin{Rmk}
		In Definition \ref{Simplex definition}, the assumption that $K$ lives in a hyperplane that does not contain the origin is technically superfluous, but simplifies the theory somewhat (see \cite[Section 10]{Phelps}), and is satisfied by all the simplices that interest us here. Specifically, we know that $\mathcal{S}$ (and by extension $\mathcal{S}^G, \mathcal{T}, \mathcal{T}^G$) lives in the real hyperplane $\left\{ \phi \in \mathfrak{R}^\natural : \phi(1) = 1 \right\}$ defined by the evaluation at $1$.
	\end{Rmk}
	
	We begin with the following lemma.
	
	\begin{Lem}\label{Simplices}
		\begin{enumerate}[label=(\roman*)]
			\item The spaces $\mathcal{S}, \mathcal{S}^G, \mathcal{T}, \mathcal{T}^G$ are compact and metrizable.
			\item If $\mathcal{T} \neq \emptyset$, then the space $\mathcal{T}^G$ is a simplex.
		\end{enumerate}
	\end{Lem}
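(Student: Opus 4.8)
For part (i), I would simply assemble facts already recorded. Banach--Alaoglu together with the separability of $\mathfrak{A}$ makes $\mathcal{S}$ weak*-compact and metrizable; the tracial identity $\phi(ab)=\phi(ba)$ (ranging over $a,b\in\mathfrak{A}$) and the invariance identities $\phi=\phi\circ\Theta_g$ (ranging over $g\in G$) each carve out a weak*-closed subset of $\mathcal{S}$, so $\mathcal{T}$, $\mathcal{S}^G$, and $\mathcal{T}^G=\mathcal{T}\cap\mathcal{S}^G$ are weak*-closed in $\mathcal{S}$; and a closed subset of a compact metrizable space is compact metrizable. Thus (i) costs nothing beyond bookkeeping.

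The content is (ii), and I would break it into two facts. \textbf{Fact A:} the tracial state space of a unital C*-algebra, when nonempty, is a simplex in the sense of Definition~\ref{Simplex definition}; this is classical (see \cite{Blackadar}). \textbf{Fact B:} if $K$ is a simplex and a group $G$ acts on $K$ by affine homeomorphisms, then the fixed-point set $K^G$ is again a simplex (possibly empty). Granting both, (ii) is immediate: since each $\Theta_g$ is a $*$-automorphism, $\tau\mapsto\tau\circ\Theta_g$ is an affine weak*-homeomorphism of $\mathcal{T}$, and $\mathcal{T}^G$ is by definition the set of common fixed points of these maps, so Fact B applies. (When $G$ is amenable, Corollary~\ref{Tracial K-B} moreover guarantees $\mathcal{T}^G\neq\emptyset$; in general I would treat the empty set as a degenerate simplex, which Definition~\ref{Simplex definition} permits.)

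For Fact B, let $P=\mathbb{R}_{\geq 0}K$ be the cone over $K$ and $V_0=P-P$ its linear span, so $(V_0,\leq_P)$ is a vector lattice. Because $K$ lies in a hyperplane $\{\ell=1\}$ off the origin, every affine homeomorphism of $K$ extends uniquely to a linear automorphism of $V_0$ taking $P$ onto $P$; as this map and its inverse are order-preserving, it is a lattice automorphism (an order isomorphism of a lattice preserves $\vee$ and $\wedge$). Hence the common fixed-point subspace $V_0^G$ is closed under $\vee$ and $\wedge$, and for $v\in V_0^G$ the decomposition $v=(v\vee 0)-((-v)\vee 0)$ has both summands in $P^G:=P\cap V_0^G$ (each $g$ is linear, hence fixes $0$ and thus fixes $v\vee 0$), giving $V_0^G=P^G-P^G$ with $P^G$ inducing the restriction of $\leq_P$, a lattice order. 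If $K^G=\emptyset$ we invoke the convention above; otherwise one checks that $K^G=K\cap V_0^G$ is compact, lies in the $G$-invariant hyperplane $\{\ell=1\}$ (for $\mathcal{T}$ this is $\phi\mapsto\phi(1)$, fixed since $\Theta_g(1)=1$), and satisfies $P^G=\mathbb{R}_{\geq 0}K^G$ (if $v\in P^G\setminus\{0\}$ then $\ell(v)>0$ and $v/\ell(v)\in K^G$). So $\mathcal{T}^G=K^G$ is a simplex.

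If one prefers not to cite Fact A, I would prove it by passing to the enveloping von Neumann algebra $M=\mathfrak{A}^{**}$: normal extension identifies $\mathcal{T}$ affinely with the normal tracial states on $M$, all of which live on the finite summand $N=zM$; the center-valued trace $T\colon N\to Z(N)$ furnishes mutually inverse affine bijections $\omega\mapsto\omega\circ T$ and $\tau\mapsto\tau|_{Z(N)}$ between normal tracial states on $N$ and normal states on the abelian algebra $Z(N)$; and the normal state space of $Z(N)\cong L^\infty(\Omega,\mu)$ is the base of the positive cone of the Banach lattice $L^1(\Omega,\mu)$, hence a simplex. The point requiring care, and the chief obstacle in writing this up, is the verification in Fact B that the fixed-point subspace retains both the base/hyperplane structure and the sublattice property; and, if one instead proves Fact A from scratch and runs the argument directly in the $L^1$ picture, the extra step that the normal extensions of the $\Theta_g$ fix the finite-part projection $z$ (by its maximality) and commute with $T$ (by uniqueness of $T$), making the whole chain $G$-equivariant. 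The two-step route via Facts A and B sidesteps that and is the version I would present.
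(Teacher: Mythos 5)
Your part (i) is the same bookkeeping as the paper's. For part (ii) you reach the same conclusion by a correct but genuinely different mechanism. The paper, like you, starts from the classical fact that $\mathcal{T}$ is a simplex, but then works concretely with the Jordan decomposition: it first proves an auxiliary lemma (Lemma \ref{Tracial Jordan}) showing that the Jordan components $\phi^{\pm}$ of a tracial self-adjoint functional are again tracial, then checks that $\left(\phi^{+}\circ\Theta_g\right)\perp\left(\phi^{-}\circ\Theta_g\right)$ and invokes uniqueness of the Jordan decomposition to conclude that the components of an invariant functional are invariant; closure of the invariant subspace under the lattice operations is then extracted from the identities $\phi\lor\psi=(\phi-\psi)^{+}+\psi$ and $\phi\land\psi=-(\psi-\phi)^{+}+\psi$. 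You instead isolate the purely Choquet-theoretic statement (your Fact B) that the fixed-point set of a group acting by affine homeomorphisms on a simplex is again a simplex, and prove it order-theoretically: each $\tau\mapsto\tau\circ\Theta_g$ is a lattice automorphism of $P-P$, so the fixed subspace is a sublattice and the lattice decomposition $v=(v\lor 0)-\left((-v)\lor 0\right)$ of a fixed element has fixed summands. This buys modularity and generality, and it bypasses Lemma \ref{Tracial Jordan} and the orthogonality computation entirely --- you never need to identify the abstract lattice positive part with the Jordan positive part --- whereas the paper's route keeps everything at the level of explicit C*-algebra functionals. Two minor points in your favor: you make explicit that traciality is preserved under $\tau\mapsto\tau\circ\Theta_g$ (the computation already appearing in Corollary \ref{Tracial K-B}), and you flag the degenerate possibility $\mathcal{T}^G=\emptyset$, which the paper's proof leaves unaddressed.
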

	
	Before proving this lemma, we need to introduce some terminology. Let $\phi, \psi$ be two positive linear functionals on a unital C*-algebra $\mathfrak{A}$. We say that the two positive functionals are \emph{orthogonal}, notated $\phi \perp \psi$, if they satisfy either of the following two equivalent conditions:
	\begin{enumerate}[label=(\alph*)]
		\item $\| \phi + \psi \| = \| \phi \| + \| \psi \|$.
		\item For every $\epsilon > 0$ exists positive $z \in \mathfrak{A}$ of norm $\leq 1$ such that $\phi(1 - z) < \epsilon, \psi(z) < \epsilon$.
	\end{enumerate}
	It is well-know that these conditions are equivalent \cite[Lemma 3.2.3]{Pedersen}. For every $\phi \in \mathfrak{R}^\natural$, there exist unique positive linear functionals $\phi^+, \phi^-$ such that $\phi = \phi^+ - \phi^-$, and $\phi^+ \perp \phi^-$, called the \emph{Jordan decomposition} of $\phi$ \cite[II.6.3.4]{Blackadar}.
	
	Before proving Lemma \ref{Simplices}, we demonstrate the following property of the Jordan decomposition of a tracial functional.
	
	\begin{Lem}\label{Tracial Jordan}
		Let $\mathfrak{A}$ be a unital C*-algebra, and $\phi \in \mathfrak{R}^\natural$. Suppose that $\phi(x y) = \phi(y x)$ for all $x, y \in \mathfrak{A}$. Then $\phi^\pm(xy) = \phi^\pm(yx)$ for all $x, y \in \mathfrak{A}$.
	\end{Lem}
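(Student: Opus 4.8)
The plan is to reduce the trace property from $\phi$ to $\phi^\pm$ by exploiting the \emph{uniqueness} of the Jordan decomposition together with the fact that inner automorphisms preserve positivity and orthogonality of functionals. For a unitary $u \in \mathfrak{A}$, write $\operatorname{Ad}_u$ for the $*$-automorphism $x \mapsto u x u^*$. Being a $*$-automorphism of a C*-algebra, $\operatorname{Ad}_u$ is an isometric bijection, so $\| \omega \circ \operatorname{Ad}_u \| = \| \omega \|$ for every bounded linear functional $\omega$ on $\mathfrak{A}$, and $\omega \circ \operatorname{Ad}_u$ is positive whenever $\omega$ is.

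First I would observe that the hypothesis on $\phi$ says precisely that $\phi \circ \operatorname{Ad}_u = \phi$ for every unitary $u$, since $\phi(u x u^*) = \phi(x u^* u) = \phi(x)$. Composing the Jordan decomposition $\phi = \phi^+ - \phi^-$ with $\operatorname{Ad}_u$ then gives another decomposition $\phi = (\phi^+ \circ \operatorname{Ad}_u) - (\phi^- \circ \operatorname{Ad}_u)$ into positive functionals, and these two pieces remain orthogonal: using characterization (a) of orthogonality together with the isometry just noted,
\begin{align*}
	\bigl\| \phi^+ \circ \operatorname{Ad}_u + \phi^- \circ \operatorname{Ad}_u \bigr\|
	& = \bigl\| (\phi^+ + \phi^-) \circ \operatorname{Ad}_u \bigr\| = \| \phi^+ + \phi^- \| \\
	& = \| \phi^+ \| + \| \phi^- \| = \bigl\| \phi^+ \circ \operatorname{Ad}_u \bigr\| + \bigl\| \phi^- \circ \operatorname{Ad}_u \bigr\| .
\end{align*}
By the uniqueness of the Jordan decomposition, $\phi^\pm \circ \operatorname{Ad}_u = \phi^\pm$ for every unitary $u$; that is, $\phi^+(u x u^*) = \phi^+(x)$ and $\phi^-(u x u^*) = \phi^-(x)$ for all unitary $u$ and all $x \in \mathfrak{A}$.

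To finish, I would specialize $x = b u$ in $\phi^+(u x u^*) = \phi^+(x)$: since $u(bu)u^* = ub$, this yields $\phi^+(ub) = \phi^+(bu)$ for every unitary $u$ and every $b \in \mathfrak{A}$, and likewise for $\phi^-$. Because $\mathfrak{A}$ is unital, every element is a finite linear combination of unitaries (each self-adjoint contraction $h$ equals $\frac{1}{2}(u + u^*)$ with $u = h + i(1 - h^2)^{1/2}$ unitary, and every element is a combination of two self-adjoints), so linearity of $\phi^\pm$ in the first argument upgrades this to $\phi^\pm(ab) = \phi^\pm(ba)$ for all $a, b \in \mathfrak{A}$. (Alternatively, once $\phi^+$ is known to be tracial, $\phi^- = \phi^+ - \phi$ is tracial as a difference of tracial functionals.) I do not anticipate a serious obstacle; the only point needing a moment's care is verifying that orthogonality of $\phi^+$ and $\phi^-$ is inherited by $\phi^\pm \circ \operatorname{Ad}_u$, which is exactly where the isometry of $\operatorname{Ad}_u$ and characterization (a) are used — everything else is bookkeeping with the defining properties of the Jordan decomposition and the standard fact that the unitaries span a unital C*-algebra.
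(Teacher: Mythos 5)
Your proposal is correct and follows essentially the same route as the paper's proof: translate traciality into invariance under every inner automorphism $\operatorname{Ad}_u$, note that composing the Jordan decomposition with the isometric automorphism $\operatorname{Ad}_u$ yields another orthogonal decomposition of $\phi$, invoke uniqueness to conclude $\phi^\pm \circ \operatorname{Ad}_u = \phi^\pm$, and recover traciality of $\phi^\pm$ from the fact that unitaries span $\mathfrak{A}$. The only cosmetic difference is that you verify the preserved orthogonality via the norm identity directly on the sum, whereas the paper phrases it through $\|\phi\| = \|\phi^+ \circ \operatorname{Ad}_u\| + \|\phi^- \circ \operatorname{Ad}_u\|$; both rest on the same isometry observation.
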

	
	\begin{proof}
		Let $\mathcal{U}(\mathfrak{A})$ denote the group of unitary elements in $\mathfrak{A}$. For a unitary element $u \in \mathcal{U}(\mathfrak{A})$, let $\operatorname{Ad}_u \in \operatorname{Aut}(\mathfrak{A})$ denote the inner automorphism
		$$\operatorname{Ad}_u x = u x u^* .$$
		Let $\psi \in \mathfrak{A}'$. We claim that $\psi$ is tracial if and only if $\psi \circ \operatorname{Ad}_u = \psi$ for all unitaries $u \in \mathcal{U}(\mathfrak{A})$.
		
		Let $u \in \mathcal{U}(\mathfrak{A})$ be unitary, and $x \in \mathfrak{A}$ an arbitrary element. Then
		\begin{align*}
			\phi(u x)	& = \psi \left(u (x u) u^* \right) \\
			& = \psi(\operatorname{Ad}_u (xu) ) .
		\end{align*}
		So $\psi(ux) = \psi(xu)$ if and only if $\psi(\operatorname{Ad}_u(xu)) = \psi(xu)$.
		
		In one direction, suppose that $\psi = \psi \circ \operatorname{Ad}_u$ for all $u \in \mathcal{U}(\mathfrak{A}).$ Fix $x, y \in \mathfrak{A}$. Then we can write $y = \sum_{j = 1}^4 c_j u_j$ for some $c_1, \ldots, c_4 \in \mathbb{C}$ and unitaries $u_1, \ldots, u_4 \in \mathcal{U} (\mathfrak{A})$ unitary. Then
		\begin{align*}
			\psi(xy)	& = \psi \left( x \sum_{j = 1}^4 c_j u_j \right) \\
			& = \sum_{j = 1}^4 c_j \psi(x u_j) \\
			& = \sum_{j = 1}^4 c_j \psi(\operatorname{Ad}_{u_j} (x u_j)) \\
			& = \sum_{j = 1}^4 c_j \psi(u_j x) \\
			& = \psi \left( \left( \sum_{j = 1}^4 c_j u_j \right) x \right) \\
			& = \psi(y x).
		\end{align*}
		Thus $\psi$ is tracial.
		
		In the other direction, suppose there exists $u \in \mathcal{U}(\mathfrak{A})$ such that $\psi \circ \operatorname{Ad}_u \neq \psi$. Let $y \in \mathfrak{A}$ such that $\psi(y) \neq \psi (\operatorname{Ad}_u y)$, and let $x = y u^*$. Then
		\begin{align*}
			\psi(x u)	& = \psi(y) \\
			& \neq \psi(\operatorname{Ad}_u y) \\
			& = \psi\left( u y u^* \right) \\
			& = \psi(u x) .
		\end{align*}
		Therefore $\psi$ is not tracial.
		
		Now, if $\phi \in \mathfrak{R}^\natural$ is tracial, then $\phi \circ \operatorname{Ad}_u = \phi$ for all $u \in \mathcal{U}(\mathfrak{A})$. Then $\phi = \phi \circ \operatorname{Ad}_u = \left(\phi^+ \circ \operatorname{Ad}_u \right) - \left(\phi^- \circ \operatorname{Ad}_u\right)$. But $\left\| \phi^\pm \circ \operatorname{Ad}_u \right\| = \left\| \phi^\pm \right\|$, so it follows that $\left\| \phi \right\| = \left\| \phi^+ \circ \operatorname{Ad}_u \right\| + \left\| \phi^- \circ \operatorname{Ad}_u \right\|$. Therefore $\phi = \left(\phi^+ \circ \operatorname{Ad}_u\right) - \left(\phi^- \circ \operatorname{Ad}_u\right)$ is an orthogonal decomposition of $\phi$, and so it is \emph{the} Jordan decomposition. This means that $\phi^\pm = \phi^\pm \circ \operatorname{Ad}_u$. Since this is true for all $u \in \mathcal{U}(\mathfrak{A})$, it follows that $\phi^\pm$ are tracial.
	\end{proof}
	
	\begin{proof}[Proof of Lemma \ref{Simplices}]
		\begin{enumerate}[label=(\roman*)]
			\item This all follows because $\mathcal{S}$ is a weak*-closed real subspace of the unit ball in the continuous dual of the separable Banach space $\mathfrak{R}$, and the spaces $\mathcal{S}^G, \mathcal{T}, \mathcal{T}^G$ are all closed subspaces of $\mathcal{S}$.
			
			\item It is a standard fact that if $\mathcal{T} \neq \emptyset$, then $\mathcal{T}$ is a simplex \cite[II.6.8.11]{Blackadar}. Let
			$$C^G = \left\{ c \phi : c \in \mathbb{R}_{\geq 0} , \phi \in \mathcal{T}^G \right\}$$ be the positive cone of $\mathcal{T}^G$, and let $\mathfrak{R}^\natural$ denote the (real) space of all bounded self-adjoint tracial linear functionals on $\mathfrak{A}$. Let $E^G$ denote the (real) space of all bounded self-adjoint $\Theta$-invariant linear functionals on $\mathfrak{A}$. We already know that $\mathcal{T}$ lives in a hyperplane of $\mathfrak{R}^\natural$ defined by the evaluation functional $\phi \mapsto \phi(1)$. It will therefore suffice to show that $E^G = C^G - C^G$, and that $E^G$ is a sub-lattice of $\mathfrak{R}^\natural$.
			
			Let $\phi^+, \phi^- \geq 0$ be positive functionals on $\mathfrak{A}$ such that $\phi = \phi^+ - \phi^-$ is tracial, and $\phi^+ \perp \phi^-$. By Lemma \ref{Tracial Jordan}, we know that $\phi^+, \phi^-$ are tracial. We claim that if $\phi \in E^G$, then $\phi^+, \phi^- \in C^G$. To prove this, let $g \in G$, and consider that $\phi^+ \circ \Theta_g, \phi^- \circ \Theta_g$ are both positive linear functionals such that $\phi = \left( \phi^+ \circ \Theta_g \right) - \left( \phi^- \circ \Theta_g \right)$.
			
			We claim that $\left( \phi^+ \circ \Theta_g \right) \perp \left( \phi^- \circ \Theta_g \right)$. Fix $\epsilon > 0$. We know that there exists $z \in \mathfrak{A}$ such that $\| z \| \leq 1, 0 \leq z$, and such that $\phi^+ \left( 1 - z \right) < \epsilon, \phi^- (z) < \epsilon$. Then $\Theta_{g^{-1}} (z)$ is a positive element of norm $\leq 1$ such that
			\begin{align*}
				\phi^+ \left( \Theta_g \left( \Theta_{g^{-1}} (1 - z) \right) \right)	& = \phi^+ (1 - z)	& < \epsilon, \\
				\phi^- \left( \Theta_g \left( \Theta_{g^{-1}} (z) \right) \right)	& = \phi^{-} (z)	& < \epsilon .
			\end{align*}
			Therefore $\left( \phi^+ \circ \Theta_g \right) - \left( \phi^- \circ \Theta_g \right)$ is a Jordan decomposition of $\phi$, and since the Jordan decomposition is unique, it follows that $\phi^+ = \phi^+ \circ \Theta_g, \phi^- = \phi^- \circ \Theta_g$, i.e. that $\phi^+, \phi^- \in C^G$. This means that $E^G = C^G - C^G$.
			
			We now want to show that $E^G = C^G - C^G$ is a sublattice of $E$, i.e. that it is closed under the lattice operations. Let $\phi, \psi \in E^G$. For this calculation, we draw on the identities listed in \cite[Theorem 1.3]{PositiveOperators}. Then
			\begin{align*}
				\phi \lor \psi	& = \left( \left( (\phi - \psi) + \psi \right) \lor (0 + \psi) \right) \\
				& = \left( (\phi - \psi) \lor 0 \right) + \psi \\
				& = (\phi - \psi)^+ + \psi , \\
				\phi \land \psi	& = \left( (\phi - \psi) + \psi \right) \land (0 + \psi) \\
				& = \left( (\phi - \psi) \land 0 \right) + \psi \\
				& = - \left( (- (\phi - \psi)) \lor 0 \right) + \psi \\
				& = - (\psi - \phi)^+ + \psi .
			\end{align*}
			Therefore, if $E^G$ is a real linear space and is closed under the operations $\phi \mapsto \phi^+, \phi \mapsto \phi^-$, then it is also closed under the lattice operations. Thus $E^G$ is a sublattice of $\mathfrak{R}^\natural$.
			
			Hence, the subset $\mathcal{T}^G$ is a compact metrizable simplex.
		\end{enumerate}
	\end{proof}
	
	In order to keep our treatment relatively self-contained, we define here several elementary concepts from Choquet theory that will be relevant in this section.
	
	\begin{Def}
		Let $S_1, S_2$ be convex spaces. We call a map $T : S_1 \to S_2$ an \emph{affine map} if for every $v, w \in S_1 ; \; t \in [0, 1]$, we have
		$$T(t v + (1 - t) w) = t T(v) + (1 - t) T(w) .$$
		In the case where $S_2 \subseteq \mathbb{R}$, we call $T$ an \emph{affine functional}.
	\end{Def}
	
	\begin{Def}
		Throughout, let $K$ be a convex subset of a locally convex real topological vector space $V$.
		\begin{enumerate}[label=(\alph*)]
			\item A point $k \in K$ is called an \emph{extreme point} of $K$ if for every pair of points $k_1, k_2 \in K$ and parameter $t \in [0, 1]$ such that $k = t k_1 + (1 - t) k_2$, either $k_1 = k_2$ or $t \in \{0, 1\}$. In other words, we call $k$ extreme if there is no nontrivial way of expressing $k$ as a convex combination of elements of $K$.
			\item The set of all extreme points of $K$ is denoted $\partial_e K$.
			\item A subset $F$ of $K$ is called a \emph{face} if for every pair $k_1, k_2 \in K, t \in (0, 1)$ such that $t k_1 + (1 - t) k_2 \in F$, we have that $k_1, k_2 \in F$.
			\item A face $F$ of $K$ is called an \emph{exposed face} of $K$ if there exists a continuous affine functional $\ell : K \to \mathbb{R}$ such that $\ell(x) = 0$ for all $x \in F$, and $\ell(y) < 0$ for all $y \in K \setminus F$.
			\item A point $k \in K$ is called an \emph{exposed point} of $K$ if $\{k\}$ is an exposed face of $K$.
			\item Given a subset $\mathcal{E}$ of $K$, the closed convex hull of $\mathcal{E}$ is written as $\overline{\operatorname{co}}(\mathcal{E})$.
		\end{enumerate}
	\end{Def}
	
	We now introduce the basic concepts in our treatment of ergodic optimization.
	
	\begin{Def}
		Let $x \in \mathfrak{R}$ be a self-adjoint element, and let $K \subseteq \mathcal{S}^G$ be a compact convex subset of $\mathcal{S}^G$. Define a value $m \left( x \vert K \right)$ by
		$$m \left( x \vert K \right) : = \sup_{\psi \in K} \psi(x) .$$
		We say a state $\phi \in K$ is \emph{$(x \vert K)$-maximizing} if $\phi(x) = m (x \vert K)$. Let $K_\mathrm{max}(x) \subseteq K$ denote the set of all $(x \vert K)$-maximizing states. A state $\phi \in K$ is called \emph{uniquely $(x \vert K)$-maximizing} if $K_\mathrm{max}(x) = \{\phi\}$.
	\end{Def}
	
	\begin{Rmk}
		We note here the following, somewhat obvious inequality: If $K_1 \subseteq K_2$ are compact convex subsets of $\mathcal{S}^G$, then $m \left( x \vert K_1 \right) \leq m \left( x \vert K_2 \right)$, and in particular, we will always have $m \left( x \vert K_1 \right) \leq m \left( x \vert \mathcal{S}^G \right)$.
	\end{Rmk}
	
	We will single out one type of compact convex subset of $\mathcal{S}^G$ which will prove important later. Given a subset $A \subseteq \mathfrak{A}$, set $$\operatorname{Ann}(A) : = \left\{ \phi \in \mathcal{S}^G : A \subseteq \ker \phi \right\} .$$
	
	When $\mathfrak{I} \subseteq \mathfrak{A}$ is a $\Theta$-invariant closed ideal of $\mathfrak{A}$, we have a bijective correspondence between the states in $\operatorname{Ann}(\mathfrak{I})$ and the states on $\mathfrak{A} / \mathfrak{I}$ invariant under the action induced by $\Theta$. We will be referring to this set again in Sections \ref{Singly generated UE} and \ref{Amenable UE}, when values of the form $m \left( a \vert \operatorname{Ann}(A) \right)$ come up in reference to certain ergodic averages. We observe that $\operatorname{Ann}(\{0\}) = \mathcal{S}^G$, and that $A \subseteq B \subseteq \mathfrak{A} \Rightarrow \operatorname{Ann}(A) \supseteq \operatorname{Ann}(B)$. There is also no a priori guarantee that $\operatorname{Ann}(A) \neq \emptyset$, since for example $\operatorname{Ann}(\{1\}) = \emptyset$. However, Proposition \ref{Nonempty annihilators} gives sufficient conditions for $\operatorname{Ann}(A)$ to be nonempty.
	
	\begin{Prop}\label{Nonempty annihilators}
	Let $A \subseteq \mathfrak{A}$ be such that $\Theta_g A \subseteq A$ for all $g \in G$. Suppose there exists a state on $\mathfrak{A}$ which vanishes on $A$. Then $\operatorname{Ann}(A) \neq \emptyset$. In particular, if $\mathfrak{I} \subsetneq \mathfrak{A}$ is a proper closed two-sided ideal of $\mathfrak{A}$ for which $\Theta_g \mathfrak{I} = \mathfrak{I}$ for all $g \in G$, then $\operatorname{Ann}(\mathfrak{I}) \neq \emptyset$.
	\end{Prop}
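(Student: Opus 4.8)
The plan is to exhibit $\operatorname{Ann}(A)$ as the intersection of $\mathcal{S}^G$ with a suitable weak*-compact convex $\Theta$-stable subset of $\mathcal{S}$ and then apply Theorem \ref{K-B}. Set
\[
K \;=\; \{\phi \in \mathcal{S} : \phi(a) = 0 \text{ for all } a \in A\},
\]
so that, by definition, $\operatorname{Ann}(A) = K \cap \mathcal{S}^G$. It then suffices to check that $K$ satisfies the hypotheses of Theorem \ref{K-B}.

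The verifications are routine: $K$ is nonempty precisely because, by hypothesis, some state vanishes on $A$; it is convex, since a convex combination of states each vanishing on every $a \in A$ is again such a state; and it is weak*-compact, being the intersection of the weak*-compact set $\mathcal{S}$ with the weak*-closed sets $\{\phi \in \mathfrak{R}^\natural : \phi(a) = 0\}$, $a \in A$ (evaluation at a fixed element is weak*-continuous). The only point that uses a genuine hypothesis is $\Theta$-stability: if $\psi \in K$ and $g \in G$, then for every $a \in A$ we have $\Theta_g a \in A$ (this is where $\Theta_g A \subseteq A$ enters), so $(\psi \circ \Theta_g)(a) = \psi(\Theta_g a) = 0$, whence $\psi \circ \Theta_g \in K$. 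Theorem \ref{K-B} then gives $K \cap \mathcal{S}^G = \operatorname{Ann}(A) \neq \emptyset$.

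For the ``in particular'' clause, suppose $\mathfrak{I} \subsetneq \mathfrak{A}$ is a proper closed two-sided ideal with $\Theta_g \mathfrak{I} = \mathfrak{I}$ for every $g$. Then $\mathfrak{A}/\mathfrak{I}$ is a C*-algebra, unital with unit the class of $1$, and nonzero because $\mathfrak{I}$ is proper; hence it carries a state $\bar\rho$. Composing with the quotient map $\pi_\mathfrak{I} : \mathfrak{A} \to \mathfrak{A}/\mathfrak{I}$ produces a state $\rho = \bar\rho \circ \pi_\mathfrak{I}$ on $\mathfrak{A}$ with $\mathfrak{I} \subseteq \ker \rho$. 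Since $\Theta_g \mathfrak{I} = \mathfrak{I} \subseteq \mathfrak{I}$, the set $A = \mathfrak{I}$ meets the hypotheses of the first part, and we conclude $\operatorname{Ann}(\mathfrak{I}) \neq \emptyset$.

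The one thing to watch is that the argument rests on Theorem \ref{K-B}, hence on amenability of $G$, and amenability does appear genuinely necessary here: for a non-amenable $G$ with a compact action $G \curvearrowright Y$ admitting no invariant Borel probability measure, taking $\mathfrak{A} = C(Y \sqcup \{\ast\})$ with $\ast$ a fixed point (so that $\mathcal{S}^G \neq \emptyset$) and $A = \{f : f|_Y = 0\}$ satisfies all the stated hypotheses while $\operatorname{Ann}(A) = \emptyset$. So I would carry out the above under the standing assumption that $G$ is amenable; granting that, there is no real obstacle, as every step is a short verification.
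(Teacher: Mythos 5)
Your argument for the main claim is exactly the paper's: take $K$ to be the set of all states vanishing on $A$, check that it is nonempty, convex, weak*-compact, and stable under precomposition with each $\Theta_g$, and apply Theorem \ref{K-B}; the paper only spells out the stability check, so your additional verifications are elaboration rather than a departure. For the ``in particular'' clause you take a mildly different route: the paper pulls back a $\tilde{\Theta}$-invariant state on $\mathfrak{A}/\mathfrak{I}$ along the quotient map (leaving the existence of such an invariant state implicit), whereas you pull back an \emph{arbitrary} state on the nonzero unital quotient to obtain a not-necessarily-invariant state vanishing on $\mathfrak{I}$ and then invoke the first part. Your version is the cleaner of the two, since it genuinely reduces the second claim to the first and needs nothing beyond the fact that a nonzero unital C*-algebra has a state. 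Your closing caveat is also correct and worth keeping: the proposition as printed carries no amenability hypothesis, yet the proof runs through Theorem \ref{K-B}, which requires it, and your $C(Y \sqcup \{\ast\})$ example shows the conclusion can genuinely fail for non-amenable $G$ even when $\mathcal{S}^G \neq \emptyset$; so amenability must be read as a standing assumption for this proposition.
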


	\begin{proof}
	Let $K \subseteq \mathcal{S}$ denote the family of all (not necessarily invariant) states on $\mathfrak{A}$ which vanish on $A$. Then if $\phi \in K$ and $a \in A$, then $\Theta_g a \in A$, so $\phi \circ \Theta_g$ vanishes on $A$. Therefore $\Theta_g K \subseteq K$ for all $g \in G$. It follows from Theorem \ref{K-B} that $K \cap \mathcal{S}^G = \operatorname{Ann}(A) \neq \emptyset$.
	
	Suppose $\mathfrak{I} \subsetneq \mathfrak{A}$ is a proper closed two-sided ideal of $\mathfrak{A}$ for which $\Theta_g \mathfrak{I} = \mathfrak{I}$ for all $g \in G$, and let $\pi : \mathfrak{A} \twoheadrightarrow \mathfrak{A} / \mathfrak{I}$ be the canonical quotient map. Let $\tilde{\Theta} : G \to \operatorname{Aut}(\mathfrak{A} / \mathfrak{I})$ be the induced action of $G$ on $\mathfrak{A} / \mathfrak{I}$ by $\tilde{\Theta}_g(a + \mathfrak{I}) = \Theta_g a + \mathfrak{I}$. Let $\psi$ be a $\tilde{\Theta}$-invariant state on $\mathfrak{A} / \mathfrak{I}$. Then $\psi \circ \pi$ is a $\Theta$-invariant state on $\mathfrak{A}$ which vanishes on $\mathfrak{I}$, i.e. $\psi \circ \pi \in \operatorname{Ann}(\mathfrak{I})$.
	\end{proof}
	
	\begin{Prop}\label{Nonempty maximizing states}
		Let $K \subseteq \mathcal{S}^G$ be a nonempty compact convex subset of $\mathcal{S}^G$, and let $x \in \mathfrak{R}$. Then $K_\mathrm{max}(x)$ is a nonempty, compact, exposed face of $K$.
	\end{Prop}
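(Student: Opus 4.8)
The plan is to argue directly from the definitions, with no machinery beyond weak*-continuity of point evaluations. First I would record that the map $\psi \mapsto \psi(x)$ is weak*-continuous on $\mathcal{S}$ and affine (indeed linear) in $\psi$. Since $K$ is nonempty and weak*-compact, this map attains its supremum $m(x \vert K)$ on $K$, so $K_{\mathrm{max}}(x) \neq \emptyset$. Moreover $K_{\mathrm{max}}(x)$ is the intersection of $K$ with the preimage of the closed singleton $\{m(x \vert K)\}$ under a continuous map, hence weak*-closed, hence weak*-compact.

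Next I would verify that $K_{\mathrm{max}}(x)$ is a face of $K$. Suppose $k_1, k_2 \in K$ and $t \in (0,1)$ satisfy $k := t k_1 + (1-t) k_2 \in K_{\mathrm{max}}(x)$. By affineness of the evaluation, $m(x \vert K) = k(x) = t\, k_1(x) + (1-t)\, k_2(x)$, while $k_i(x) \leq m(x \vert K)$ for $i = 1, 2$ by definition of the supremum. Since $t$ and $1-t$ are both strictly positive, this forces $k_1(x) = k_2(x) = m(x \vert K)$, i.e. $k_1, k_2 \in K_{\mathrm{max}}(x)$.

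Finally, to upgrade this to an \emph{exposed} face, I would exhibit the continuous affine functional $\ell : K \to \mathbb{R}$ given by $\ell(\psi) = \psi(x) - m(x \vert K)$; its continuity and affineness are inherited from the evaluation map. By construction $\ell \equiv 0$ on $K_{\mathrm{max}}(x)$, and for any $\psi \in K \setminus K_{\mathrm{max}}(x)$ we have $\psi(x) < m(x \vert K)$ — because $\psi(x) \leq m(x \vert K)$ always, with equality exactly on $K_{\mathrm{max}}(x)$ — so $\ell(\psi) < 0$. This is precisely the defining condition for $K_{\mathrm{max}}(x)$ to be an exposed face of $K$.

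I do not anticipate a genuine obstacle; the proposition is essentially a formal consequence of the definitions. The one spot calling for mild care is the face condition, where one must use that \emph{both} convex coefficients are strictly positive to pass from ``the average equals the maximum'' to ``each term equals the maximum.'' (In fact, once the functional $\ell$ above is in hand, facedness follows automatically from $\ell \leq 0$ on $K$, but it is cleaner to establish it separately so that the conclusion lines up verbatim with the stated definition of an exposed face.)
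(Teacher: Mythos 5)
Your proposal is correct and follows essentially the same route as the paper: nonemptiness and compactness from weak*-continuity of $\psi \mapsto \psi(x)$ together with compactness of $K$, and exposedness via the functional $\ell(\psi) = \psi(x) - m(x \vert K)$. Your explicit verification of the face condition is a small addition the paper leaves implicit, but it is the same argument.
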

	
	\begin{proof}
		To see that $K_\mathrm{max}(x)$ is nonempty, for each $n \in \mathbb{N}$, let $\phi_n \in K$ such that $\phi_n(x) \geq m(x \vert K) - \frac{1}{n}$. Then since $K$ is compact, the sequence $(\phi_n)_{n = 1}^\infty$ has a convergent subsequence. Let $\phi$ be the limit of a convergent subsequence of $(\phi_n)_{n = 1}^\infty$. Then $\phi$ is $(x \vert K)$-maximizing.
		
		To see that $K_\mathrm{max}(x)$ is compact, consider that
		$$K_\mathrm{max}(x) = \left\{ \phi \in K : \phi(x) = m (x \vert K) \right\} ,$$
		which is a closed subset of $K$. As for being an exposed face, consider the continuous affine functional $\ell : K \to \mathbb{R}$ given by
		$$\ell(\phi) = \phi(x) - m(x \vert K) .$$
		Then the functional $\ell$ exposes $K_\mathrm{max}(x \vert K)$, since it is nonpositive on all of $K$ and vanishes exactly on $K_\mathrm{max}(x)$.
	\end{proof}

	The following result describes the ways in which some ergodic optimizations interact with equivariant *-homomorphisms of C*-dynamical systems.
	
	\begin{Thm}\label{Ergodic optimization through *-homomorphisms}
		Let $\left( \mathfrak{A} , G , \Theta \right) , \left( \tilde{\mathfrak{A}} , G , \tilde{\Theta} \right)$ be two C*-dynamical systems, and let $\pi : \mathfrak{A} \to \tilde{\mathfrak{A}}$ be a surjective *-homomorphism such that
		\begin{align*}
			\tilde{\Theta}_g \circ \pi	& = \pi \circ \Theta_g	& (\forall g \in G) .
		\end{align*}
		Let $\tilde{\mathcal{S}}^G$ denote the space of $\tilde{\Theta}$-invariant states on $\tilde{\Theta}$. Then $m \left( \pi(a) \vert \tilde{\mathcal{S}}^G \right) = m \left( a \vert \operatorname{Ann}(\ker \pi) \right)$.
	\end{Thm}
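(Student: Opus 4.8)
The plan is to exhibit a bijective, affine, weak*-homeomorphic correspondence between $\operatorname{Ann}(\ker \pi) \subseteq \mathcal{S}^G$ and $\tilde{\mathcal{S}}^G$, and then check that under this correspondence the two optimization problems $\sup \psi(a)$ and $\sup \tilde\psi(\pi(a))$ coincide term-by-term. Concretely, since $\pi$ is surjective, the first isomorphism theorem gives a $*$-isomorphism $\bar\pi : \mathfrak{A}/\ker\pi \xrightarrow{\sim} \tilde{\mathfrak{A}}$. A state $\tilde\phi$ on $\tilde{\mathfrak{A}}$ pulls back to the state $\tilde\phi \circ \pi$ on $\mathfrak{A}$, which annihilates $\ker\pi$; conversely any state $\phi$ on $\mathfrak{A}$ with $\ker\pi \subseteq \ker\phi$ factors as $\phi = \tilde\phi \circ \pi$ for a unique state $\tilde\phi$ on $\tilde{\mathfrak{A}}$ (this is exactly the content of the remark preceding Proposition \ref{Nonempty annihilators}, via $\bar\pi$). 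So $\tilde\phi \mapsto \tilde\phi\circ\pi$ is a bijection between the state space of $\tilde{\mathfrak{A}}$ and the states on $\mathfrak{A}$ vanishing on $\ker\pi$.

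Next I would verify this bijection respects invariance. The intertwining hypothesis $\tilde\Theta_g \circ \pi = \pi \circ \Theta_g$ shows $(\tilde\phi\circ\pi)\circ\Theta_g = \tilde\phi\circ\tilde\Theta_g\circ\pi$, so $\tilde\phi\circ\pi$ is $\Theta$-invariant iff $\tilde\phi$ is $\tilde\Theta$-invariant (using surjectivity of $\pi$ for the ``only if''). Hence the bijection restricts to a bijection between $\tilde{\mathcal{S}}^G$ and $\operatorname{Ann}(\ker\pi)$. It is clearly affine, and it is weak*-continuous (if $\tilde\phi_n \to \tilde\phi$ then for each fixed $b = \pi(a)$ we have $\tilde\phi_n(\pi(a)) \to \tilde\phi(\pi(a))$, i.e.\ $\tilde\phi_n\circ\pi \to \tilde\phi\circ\pi$); being a continuous bijection between compact Hausdorff spaces it is a homeomorphism, though in fact for the equality of suprema I only need the bijection and continuity in one direction.

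With the correspondence in hand, the final equality is a direct computation: for $\tilde\phi \in \tilde{\mathcal{S}}^G$ with corresponding $\phi = \tilde\phi\circ\pi \in \operatorname{Ann}(\ker\pi)$, we have $\phi(a) = \tilde\phi(\pi(a))$, so
\begin{align*}
m\left(a \mid \operatorname{Ann}(\ker\pi)\right) = \sup_{\phi \in \operatorname{Ann}(\ker\pi)} \phi(a) = \sup_{\tilde\phi \in \tilde{\mathcal{S}}^G} \tilde\phi(\pi(a)) = m\left(\pi(a) \mid \tilde{\mathcal{S}}^G\right),
\end{align*}
where the middle equality is just reindexing the supremum along the bijection. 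One should also note $\pi(a)$ is self-adjoint since $a$ is and $\pi$ is a $*$-homomorphism, so the right-hand side is a legitimate instance of the notation $m(\cdot \mid \cdot)$; and one could invoke Proposition \ref{Nonempty maximizing states} or \ref{Nonempty annihilators} to remark that both sides are suprema over nonempty sets when $\ker\pi$ is a proper ideal, though the stated equality holds even in the degenerate case (both sides being a supremum over the empty set, hence $-\infty$, if $\operatorname{Ann}(\ker\pi) = \emptyset$). The only mild subtlety — and the step most worth spelling out — is the factorization $\phi = \tilde\phi\circ\pi$ for states vanishing on $\ker\pi$; everything else is formal. I expect no real obstacle here: this is essentially the observation that pulling states back along a surjection identifies the invariant state space of the quotient system with a face of the invariant state space of the original, and the ergodic optimization values transport accordingly.
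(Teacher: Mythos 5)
Your proposal is correct and follows essentially the same route as the paper's proof: both establish the pullback bijection $\tilde\phi \mapsto \tilde\phi \circ \pi$ between $\tilde{\mathcal{S}}^G$ and $\operatorname{Ann}(\ker\pi)$ and then transport the optimization problem across it (the paper phrases the final step as two inequalities via maximizing states, while you reindex the supremum directly, which is an immaterial difference). No gaps.
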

	
	\begin{proof}
		Let $\tilde{\mathcal{S}}^G$ denote the space of $\tilde{\Theta}$-invariant states on $\tilde{\mathfrak{A}}$. We claim that there is a natural bijective correspondence between $\tilde{\mathcal{S}}^G$ and $\operatorname{Ann}(\ker f)$. If $\phi$ is a $\tilde{\Theta}$-invariant state on $\tilde{\mathfrak{A}}$, then we can pull it back to a $\Theta$-invariant state $\phi_0$ on $\mathfrak{A}$ by
		$$\phi_0 = \phi \circ \pi .$$
		This $\phi_0$ obviously vanishes on $\ker \pi$, and is $\Theta$-invariant by virtue of the equivariance property of $\pi$. Conversely, if we start with a $\Theta$-invariant state $\psi$ on $\mathfrak{A}$ that vanishes on $\ker \pi$, then we can push it to a $\tilde{\Theta}$-invariant state $\tilde{\psi}$ on $\tilde{ \mathfrak{A} }$ by
		$$\tilde{\psi} \circ \pi = \psi .$$
		
		We claim now that
		$$m \left( a \vert \operatorname{Ann}(\ker \pi) \right) = m \left( \pi(a) \vert \tilde{\mathcal{S}}^G \right).$$
		Let $\phi$ be a $\left( \pi(a) \vert \tilde{\mathcal{S}}^G \right)$-maximizing state on $\tilde{ \mathfrak{A} }$. Then $\phi \circ \pi \in \operatorname{Ann}(\ker \pi)$, so
		$$m \left( \pi(a) \vert \tilde{\mathcal{S}}^G \right) = \phi(\pi(a)) \leq m \left( a \vert \operatorname{Ann}(\ker \pi) \right) .$$
		On the other hand, if $\psi \in \operatorname{Ann}(\ker \pi)$ is $\left( a \vert \operatorname{Ann}(\ker \pi) \right)$-maximizing, then let $\tilde{\psi}$ be such that $\tilde{\psi} \circ \pi = \psi$. Then $\tilde{\psi} \in \tilde{\mathcal{S}}^G$, so
		$$m \left( a \vert \operatorname{Ann}(\ker \pi) \right) = \psi(a) = \tilde{\psi}(\pi(a)) \leq m \left( a \vert \tilde{\mathcal{S}}^G \right) .$$
	\end{proof}

	The assumption in Theorem \ref{Ergodic optimization through *-homomorphisms} that $\pi$ is surjective is actually superfluous, as shown in Corollary \ref{Ergodic optimization through non-surjective *-homomorphisms}. We will later provide a proof of this stronger claim that uses the gauge functional, introduced in the context of actions of $\mathbb{Z}$ in Section \ref{Singly generated UE} and in the context of actions of amenable groups in Section \ref{Amenable UE}.
	
	Moreover, the proof of Theorem \ref{Ergodic optimization through *-homomorphisms} can be extended to establish a correspondence between ergodic optimization over certain compact convex subsets of $\tilde{\mathcal{S}}^G$ and certain compact convex subsets of $\operatorname{Ann}(\ker \pi)$. For example under the same hypotheses, if $\mathcal{T} \neq \emptyset$, then the proof could be modified in a simple manner to establish that $m \left( \pi(a) \vert \tilde{\mathcal{T}}^G \right) = m \left( a \vert \operatorname{Ann}(\ker \pi) \cap \mathcal{T}^G \right)$, where $\tilde{\mathcal{T}}^G$ denotes the $\tilde{\Theta}$-invariant tracial states on $\tilde{\mathfrak{A}}$. In lieu of stating Theorem \ref{Ergodic optimization through *-homomorphisms} in greater generality, we content ourselves to state this special case (which we will use in future sections) and remark that the argument can be generalized further.
	
	The following characterization of exposed faces in compact metrizable simplices will prove useful.
	
	\begin{Lem}\label{Closed faces are exposed}
		Let $K$ be a compact metrizable simplex. Then every closed face of $K$ is exposed.
	\end{Lem}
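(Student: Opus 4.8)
The plan is to reduce the statement to a pointwise separation property and then glue together the resulting local functionals, exploiting the separability that metrizability provides. Write $A(K)$ for the space of continuous real affine functions on $K$, and for a bounded function $f$ on $K$ write $\bar f(x) := \inf\{a(x) : a \in A(K),\ a \geq f \text{ on } K\}$ for its upper envelope. Fix a closed face $F \subseteq K$. I claim it is enough to produce, for each $x_0 \in K \setminus F$, a functional $\ell_{x_0} \in A(K)$ with $\ell_{x_0} \leq 0$ on $K$, $\ell_{x_0} \equiv 0$ on $F$, and $\ell_{x_0}(x_0) < 0$. Granting this, rescale so that $\|\ell_{x_0}\|_\infty \leq 1$; since $K$ is metrizable the subspace $K \setminus F$ is separable metrizable, hence Lindel\"of, so the open cover $\{\ell_{x_0}^{-1}(-\infty,0)\}_{x_0 \in K \setminus F}$ has a countable subcover indexed by points $x_1, x_2, \dots$, and then $\ell := \sum_{n=1}^\infty 2^{-n}\ell_{x_n}$ converges uniformly to an element of $A(K)$ that is $\leq 0$ on $K$, vanishes on $F$, and is strictly negative at every point of $K \setminus F$ (each such point lies in some $\ell_{x_n}^{-1}(-\infty,0)$). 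This $\ell$ exposes $F$.

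For the pointwise claim, fix $x_0 \in K \setminus F$. Since $\{x_0\}$ and $F$ are disjoint compact convex sets, the Hahn--Banach separation theorem furnishes $b \in A(K)$ with $b(x_0) > c := \max_{y \in F} b(y)$. Set $\psi(x) := \max\bigl(b(x) - c,\, 0\bigr)$; then $\psi \in C(K)$ is convex, $\psi \geq 0$ on $K$, $\psi \equiv 0$ on $F$, and $\psi(x_0) = b(x_0) - c > 0$. Here I would invoke the structure theory of Choquet simplices (as developed in \cite{Phelps}, and in Alfsen's monograph on compact convex sets and boundary integrals): for a simplex, the upper envelope of a continuous convex function is again a continuous affine function, so $\bar\psi \in A(K)$; and since $F$ is a closed face of the simplex $K$ — hence itself a simplex, and in fact a split face of $K$ — passing to the upper envelope commutes with restriction to $F$, so $\bar\psi|_F$ is the upper envelope of $\psi|_F \equiv 0$ computed inside $F$, which is $0$. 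Since also $\bar\psi \geq \psi \geq 0$ on $K$ and $\bar\psi(x_0) \geq \psi(x_0) > 0$, the functional $\ell_{x_0} := -\bar\psi$ has all the required properties, completing the argument.

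The Lindel\"of patching is routine; the real content, and the step I expect to be the main obstacle, is the simplex-theoretic input in the second paragraph — namely that on a simplex the upper envelope of a continuous convex function is continuous affine, and that this operation is compatible with restriction to closed faces. This is precisely where the hypothesis that $K$ is a simplex is used (on a general compact convex set these envelopes need not be affine and closed faces need not be exposed), and metrizability enters both there (to keep the envelope continuous) and, crucially, in the Lindel\"of reduction. If one prefers to keep the argument self-contained, the pointwise claim can instead be extracted from the split-face decomposition of $K$ determined by the closed face $F$, or from the description $F = \{x \in K : \mu_x(F) = 1\}$ in terms of the (unique) maximal representing measures $\mu_x$, after choosing a compact $C \subseteq K \setminus F$ with $\mu_{x_0}(C) > 0$ and applying Urysohn's lemma to the disjoint closed sets $F$ and $C$.
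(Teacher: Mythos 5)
Your first paragraph is a correct and clean reduction: if for every $x_0 \in K \setminus F$ there is a \emph{continuous} affine $\ell_{x_0} \leq 0$ on $K$ vanishing on $F$ with $\ell_{x_0}(x_0) < 0$, then the Lindel\"of argument and the uniformly convergent sum $\sum_n 2^{-n} \ell_{x_n}$ do produce an exposing functional. (The paper itself offers no argument to compare against --- it simply cites Davies --- so the proposal stands or falls on its own.) The genuine gap is in the second paragraph, at exactly the step you flagged as the main obstacle: the Choquet--Meyer theorem gives that the upper envelope $\bar\psi$ of a continuous convex function on a simplex is \emph{affine}, but it is only upper semicontinuous, and metrizability does not repair this. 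Concretely, on the Poulsen simplex (metrizable, with $\partial_e K$ dense), for any non-affine continuous convex $\psi$ one has $\bar\psi = \psi$ on $\partial_e K$ (since $\delta_x$ is the only representing measure of an extreme point) while $\bar\psi \neq \psi$ somewhere (as $\bar\psi$ is affine and $\psi$ is not); a continuous function agreeing with $\psi$ on a dense set would equal $\psi$, so $\bar\psi$ is discontinuous. Consequently $\ell_{x_0} := -\bar\psi$ need not lie in $A(K)$, the sets $\ell_{x_0}^{-1}(-\infty,0)$ need not be open (so the Lindel\"of cover fails), and the infinite sum need not be continuous. Your proposed fallbacks inherit the same defect: the characteristic affine function of a closed split face is likewise only u.s.c., and Urysohn's lemma produces continuous functions that are not affine.

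The pointwise claim is true, but the standard route to it is Edwards' separation theorem rather than upper envelopes: for a simplex, any convex u.s.c. $f$ and concave l.s.c. $g$ with $f \leq g$ admit $a \in A(K)$ with $f \leq a \leq g$. Keep your $\psi = \max(b - c, 0)$ and set $M = \sup_K \psi$. Because $F$ is a face, $\chi_F$ is convex, and because $F$ is closed, $\chi_F$ is u.s.c.; hence $f := M(\chi_F - 1)$ is convex u.s.c., while $g := -\psi$ is concave and continuous, and $f \leq g$ (both vanish on $F$, and $f = -M \leq -\psi$ off $F$). Edwards' theorem then yields $a \in A(K)$ with $f \leq a \leq g$, so $a \leq 0$ on $K$, $a \equiv 0$ on $F$ (squeezed between $0$ and $0$ there), and $a(x_0) \leq -\psi(x_0) < 0$. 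This $a$ is the $\ell_{x_0}$ your first paragraph needs, and the rest of your argument then goes through verbatim.
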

	
	\begin{proof}
		See \cite[Theorem 7.4]{Davies}.
	\end{proof}
	
	The theorem we are building to in this section is as follows.
	
	\begin{Thm}\label{Simplices and exposed faces}
	Let $K \subseteq \mathcal{S}^G$ be a compact simplex. Then the closed faces of $K$ are exactly the sets of the form $K_\mathrm{max}(x)$ for some $x \in \mathfrak{R}$.
	\end{Thm}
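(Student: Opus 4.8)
The plan is to prove the two inclusions of the claimed equality separately. The inclusion ``every $K_{\mathrm{max}}(x)$ is a closed face'' is immediate: Proposition \ref{Nonempty maximizing states} already says that $K_{\mathrm{max}}(x)$ is a nonempty compact exposed face of $K$, hence in particular a closed face. (Since $K_{\mathrm{max}}(x)$ is never empty, the statement is to be read as asserting that the \emph{nonempty} closed faces are exactly the sets $K_{\mathrm{max}}(x)$.) All the real work is in the converse.

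So let $F$ be a nonempty closed face of $K$. Because $\mathfrak{A}$ is separable, $\mathcal{S}$ is compact metrizable by Lemma \ref{Simplices}(i), hence so is its compact subset $K$; thus $K$ is a compact metrizable simplex, and Lemma \ref{Closed faces are exposed} furnishes a continuous affine functional $\ell \colon K \to \mathbb{R}$ with $\ell \leq 0$ on $K$ and $\ell^{-1}(\{0\}) = F$; normalize so that $\sup_{\psi \in K}\ell(\psi) = 0$. If $F = K$ one may take $x = 0$. Otherwise the task is to manufacture $x \in \mathfrak{R}$ for which $\psi \mapsto \psi(x)$, restricted to $K$, attains its maximum exactly along $F$, i.e.\ for which $F = K_{\mathrm{max}}(x)$; in effect this means realizing (a positive multiple of) $\ell$, up to an additive constant, as an evaluation functional $\psi \mapsto \psi(x)$.

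The realization is the heart of the matter, and it is here that the simplex hypothesis is used in an essential way (beyond the appeal to Lemma \ref{Closed faces are exposed}). The raw material is that the functions $\psi \mapsto \psi(y) + c$ with $y \in \mathfrak{R}$, $c \in \mathbb{R}$ — precisely the weak*-continuous affine functions on $K$ inherited from $\mathfrak{R}^\natural$ — are \emph{uniformly dense} in the space $A(K)$ of all continuous affine functions on $K$ (the standard approximation of continuous affine functions on a compact convex set by restrictions of continuous linear functionals plus constants, which dovetails with Kadison's function representation of $\mathfrak{R}$). The difficulty is that a uniform approximant of $\ell$ is maximized slightly off $F$, so that after renormalizing it is no longer constant on $F$; a single approximant will not do. I would instead build $x$ as a countable combination of ``partial'' solutions: for each $\psi_0 \in K \setminus F$ produce $y_{\psi_0} \in \mathfrak{R}$ with $\psi \mapsto \psi(y_{\psi_0})$ constant on $F$, equal there to $\max_{\psi \in K}\psi(y_{\psi_0})$, and strictly smaller at $\psi_0$ — i.e.\ $F \subseteq K_{\mathrm{max}}(y_{\psi_0})$ but $\psi_0 \notin K_{\mathrm{max}}(y_{\psi_0})$. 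Using second countability of $K$, the relatively open sets $\{\psi : \psi(y_{\psi_0}) < m(y_{\psi_0}\mid K)\}$ covering $K \setminus F$ admit a countable subcover, yielding a sequence $(y_n)$ with $F \subseteq K_{\mathrm{max}}(y_n)$ for every $n$ and $\bigcap_n K_{\mathrm{max}}(y_n) \subseteq F$; then $x := \sum_n 2^{-n}(1+\|y_n\|)^{-1}y_n$ is a norm-convergent series in $\mathfrak{R}$ whose evaluation functional is dominated, termwise and hence in total, by its value on $F$, with overall equality precisely on $\bigcap_n K_{\mathrm{max}}(y_n) = F$; so $F = K_{\mathrm{max}}(x)$.

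The main obstacle is thus the construction of $y_{\psi_0}$ with evaluation functional \emph{exactly} constant on all of $F$. The plan for this is an iterative correction resting on the extension property of closed faces of simplices: every continuous affine function on the closed face $F$ extends to a continuous affine function on $K$ without increase of sup-norm (an application of Edwards' separation theorem, so that the restriction map $A(K) \to A(F)$ is a metric surjection). Starting from a uniform approximant of $\ell$ by an evaluation functional, one reads off its deviation from a constant on $F$ as a small element of $A(F)$, extends that deviation to a small element of $A(K)$, approximates the extension by a further evaluation functional, subtracts it, and repeats with geometrically shrinking errors; the telescoping series converges in $\mathfrak{R}$ to an element $y$ whose evaluation functional is genuinely constant on $F$ and close to $\ell$, and after a harmless adjustment using $\ell$ (which is negative at $\psi_0$) this yields $y_{\psi_0}$. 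I expect the delicate point to be the norm bookkeeping in this scheme — in particular, choosing the approximating evaluation functionals with controlled norm. For the special case $K = \mathcal{S}^G$ this can be bypassed altogether: a Jordan-decomposition argument (the Jordan parts of a $\Theta$-invariant self-adjoint functional are again $\Theta$-invariant, by uniqueness of the decomposition, so the weak*-closed balanced convex hull of $\mathcal{S}^G$ is the unit ball of its span) shows directly that $A(\mathcal{S}^G) = \{\psi \mapsto \psi(x) : x \in \mathfrak{R}\}$, whence $\ell$ is itself an evaluation functional and $F = K_{\mathrm{max}}(x)$ outright.
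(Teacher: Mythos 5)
Your overall architecture is sound in places --- the forward inclusion via Proposition \ref{Nonempty maximizing states}, the reduction to exposedness via Lemma \ref{Closed faces are exposed}, and the final assembly step (Lindel\"of subcover of $K \setminus F$ plus the norm-convergent series $x = \sum_n 2^{-n}(1+\|y_n\|)^{-1} y_n$, which correctly yields $K_\mathrm{max}(x) = \bigcap_n K_\mathrm{max}(y_n) = F$) all check out. But the step you yourself flag as ``delicate'' is a genuine gap, not bookkeeping. Your iterative correction needs, at each stage, an element $z_n \in \mathfrak{R}$ with $\sup_{\psi \in K} |\psi(z_n) - h_n(\psi)|$ small \emph{and} $\|z_n\|_{\mathfrak{A}}$ controlled by $\|h_n\|_{A(K)}$; only then does the telescoping series converge in $\mathfrak{R}$. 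Density of the evaluation functionals in $A(K)$ gives no such quantitative control: the relevant norm $\|z\|_{\mathfrak{A}} = \sup_{\psi \in \mathcal{S}}|\psi(z)|$ is taken over all of $\mathcal{S}$, not over $K$, and can be arbitrarily large compared to $\sup_K |\hat{z}|$. Indeed, by the standard successive-approximation argument, having approximants with norm control uniformly over $A(K)$ is \emph{equivalent} to the surjectivity of the restriction map $\mathfrak{R} \to A(K)$ --- which is essentially the statement you are trying to prove. Edwards' theorem does not rescue this: it controls extension from the closed face $F$ up to the simplex $K$, whereas the missing estimate concerns passing from $K$ up to $\mathcal{S}$, and $K$ is not a face of $\mathcal{S}$ (nor is $\mathcal{S}$ a simplex). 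Your closing remark for the special case $K = \mathcal{S}^G$ (Jordan decomposition of invariant functionals, so that the unit ball of $\overline{\operatorname{span}}_\mathbb{R}(\mathcal{S}^G)$ is the balanced convex hull of $\mathcal{S}^G \cup \{0\}$) is the right idea, but it proves only that special case, not the theorem for an arbitrary compact simplex $K \subseteq \mathcal{S}^G$.

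For contrast: the paper avoids approximation entirely and realizes $\ell$ \emph{exactly}. It extends $\ell$ positively-homogeneously to the cone $P$ over $K$, then to a linear functional on $P - P$ using the lattice decomposition $v = v^+ - v^-$ supplied by the simplex hypothesis, and proves weak*-continuity of this extension by showing its kernel is weak*-closed (a compactness argument on bounded sequences, Lemmas \ref{First extension} and \ref{Second extension}, packaged as Theorem \ref{Extension Theorem}). A Hahn--Banach extension within the weak* category then produces a weak*-continuous functional on $\mathfrak{R}^\natural$, which is necessarily of the form $\phi \mapsto \phi(x)$ for some $x \in \mathfrak{R}$, so that $F = K_\mathrm{max}(x)$ in one stroke. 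That exact-extension route is where the simplex hypothesis does its real work, and it is precisely the ingredient your scheme would need to substitute for the unavailable norm control.
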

	
	Before we can prove our main theorem of this section, we will need to prove the following result, which gives us a means by which to build an important linear functional.
	
	\begin{Thm}\label{Extension Theorem}
		Let $K \subseteq \mathcal{S}^G$ be a compact simplex, and let $\ell : K \to \mathbb{R}$ be a continuous affine functional. Then there exists a continuous linear functional $\tilde{\ell} : \overline{\operatorname{span}}_\mathbb{R}(K) \to \mathbb{R}$ such that $\tilde{\ell} \vert_{K} = \ell$.
	\end{Thm}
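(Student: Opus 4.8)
The plan is to build $\tilde{\ell}$ in stages: first on the positive cone $P = \{ c k : c \in \mathbb{R}_{\geq 0},\ k \in K \}$ generated by $K$, then on the linear span $\operatorname{span}_{\mathbb{R}}(K) = P - P$ (this span really is $P - P$ because $K$ is convex and misses the origin), and finally extend by continuity to $\overline{\operatorname{span}}_{\mathbb{R}}(K)$. Since $K$ lies in the hyperplane $\{ \phi : \phi(1) = 1 \}$, which does not contain the origin, every nonzero $p \in P$ has a unique representation $p = ck$ with $c > 0$ and $k \in K$; explicitly $c = p(1)$ and $k = p / p(1)$. So I can define $\tilde{\ell}(p) = p(1)\, \ell\!\left( p / p(1) \right)$ for $p \neq 0$ and $\tilde{\ell}(0) = 0$ with no ambiguity.

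Next I would verify that $\tilde{\ell}$ is additive and positively homogeneous on $P$, the additivity being exactly where the affineness of $\ell$ is used: for $p_i = c_i k_i \in P$ with $c_1 + c_2 > 0$, write $p_1 + p_2 = (c_1 + c_2) k_3$ where $k_3 = \tfrac{c_1}{c_1 + c_2} k_1 + \tfrac{c_2}{c_1 + c_2} k_2 \in K$; then affineness of $\ell$ gives $\tilde{\ell}(p_1 + p_2) = (c_1 + c_2) \ell(k_3) = c_1 \ell(k_1) + c_2 \ell(k_2) = \tilde{\ell}(p_1) + \tilde{\ell}(p_2)$. Positive homogeneity is immediate. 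One then extends $\tilde{\ell}$ to $\operatorname{span}_{\mathbb{R}}(K) = P - P$ by $\tilde{\ell}(p - q) = \tilde{\ell}(p) - \tilde{\ell}(q)$; this is well defined, since $p_1 - q_1 = p_2 - q_2$ forces $p_1 + q_2 = p_2 + q_1 \in P$, and additivity on $P$ then yields $\tilde{\ell}(p_1) - \tilde{\ell}(q_1) = \tilde{\ell}(p_2) - \tilde{\ell}(q_2)$. The resulting $\tilde{\ell}$ is linear on $\operatorname{span}_{\mathbb{R}}(K)$ and restricts to $\ell$ on $K$.

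It remains to show $\tilde{\ell}$ is continuous on $\operatorname{span}_{\mathbb{R}}(K)$, for then it extends uniquely and continuously to the closure $\overline{\operatorname{span}}_{\mathbb{R}}(K)$. This is the only step that uses that $K$ is a simplex rather than merely compact convex — indeed it fails for a general compact convex $K$ — and it is the main obstacle. The idea is to feed in the lattice structure that Definition \ref{Simplex definition} puts on $(P - P, P)$: given $v \in \operatorname{span}_{\mathbb{R}}(K)$, decompose $v = v^{\lor} - v^{\land}$ into its lattice positive and negative parts, both of which lie in $P$, so that $\lvert \tilde{\ell}(v) \rvert \leq \bigl( \sup_{k \in K} \lvert \ell(k) \rvert \bigr)\bigl( v^{\lor}(1) + v^{\land}(1) \bigr)$. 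One checks (using $p(1) = \lVert p \rVert$ for $p \in P$, which holds because $K$ consists of states) that this lattice decomposition is the cheapest among all decompositions $v = p - q$ with $p, q \in P$, i.e. $v^{\lor}(1) + v^{\land}(1) = \inf\{ p(1) + q(1) : v = p - q,\ p, q \in P \}$, so the bound is optimal. The work is then to control this quantity in terms of the topology on $\overline{\operatorname{span}}_{\mathbb{R}}(K)$; I expect this to go through by relating the lattice decomposition to the Jordan decomposition and reusing the mechanism from the proof of Lemma \ref{Simplices}(ii) — that the Jordan parts of a $\Theta$-invariant self-adjoint functional are themselves $\Theta$-invariant — together with the fact that $K \subseteq \mathcal{S}^G$.
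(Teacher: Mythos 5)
Your construction of the extension is sound and is essentially the paper's own: the paper's Lemma \ref{First extension} defines $\ell_1(c\phi) = c\,\ell(\phi)$ on $P$ and verifies additivity exactly via the convex-combination identity you use, and its Lemma \ref{Second extension} passes to $P - P$ by differences (your well-definedness check via $p_1 + q_2 = p_2 + q_1$ is the same observation the paper makes when showing $\tilde{\ell}(v) = \ell_1(f) - \ell_1(g)$ for any decomposition $v = f - g$). The problem is the continuity step, which you correctly identify as the crux but leave as an expectation, and the route you sketch for it does not work. First, for a general compact simplex $K \subseteq \mathcal{S}^G$ the lattice parts $v^{\lor}, v^{\land}$ computed in the ordered space $(P - P, P)$ are \emph{not} the Jordan parts of $v$, and the Jordan parts need not lie in $P$ at all: take $\mathfrak{A} = \mathbb{C}^2$ with trivial $G$ and let $K$ be the segment joining the states $\mu_1 = (\tfrac12 + \epsilon, \tfrac12 - \epsilon)$ and $\mu_2 = (\tfrac12 - \epsilon, \tfrac12 + \epsilon)$. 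This is a compact simplex in $\mathcal{S}^G$, but for $v = \mu_1 - \mu_2$ one has $v^{\lor} = \mu_1$, $v^{\land} = \mu_2$, while the Jordan decomposition is $v^+ = (2\epsilon, 0)$, $v^- = (0, 2\epsilon)$, neither of which belongs to $P$. The mechanism of Lemma \ref{Tracial Jordan} / Lemma \ref{Simplices}(ii) is special to the full invariant (tracial) cone and does not transfer to an arbitrary $K$, which is the generality in which Theorem \ref{Extension Theorem} is applied (to arbitrary closed faces in Theorem \ref{Simplices and exposed faces}). Second, without that identification your bound $\lvert \tilde{\ell}(v)\rvert \leq M\bigl(v^{\lor}(1) + v^{\land}(1)\bigr)$ does not control $\tilde{\ell}$ by the dual norm: in the same example $\| v \| = 4\epsilon$ while $v^{\lor}(1) + v^{\land}(1) = 2$, so the ratio of the base norm to the dual norm is unbounded as $\epsilon \to 0$, and no inequality $v^{\lor}(1) + v^{\land}(1) \leq C\|v\|$ with a constant depending only on $K$ is available. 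Third, even if you did obtain $\lvert\tilde{\ell}(v)\rvert \leq M\|v\|$, norm continuity is not the right conclusion: the theorem is used to produce a functional that extends to a \emph{weak*}-continuous functional on $\mathfrak{R}^\natural$ and is then represented by an element $x \in \mathfrak{R}$, and a norm-bounded functional on a subspace of a dual space need not be weak*-continuous.

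The paper avoids all quantitative estimates and instead proves weak*-continuity of $\tilde{\ell}$ on $V = P - P$ by the closed-kernel criterion: given a sequence $(v_n)$ in $\ker\tilde{\ell}$ converging weak* to $v \in V$, it invokes uniform boundedness and Banach--Alaoglu to extract a subsequence along which $v_n^{\lor} \to m_1 \in P$ and $v_n^{\land} \to m_2 \in P$ in the weak*-topology, concludes $v = m_1 - m_2$, and then uses the already-established weak*-continuity of $\ell_1$ on $P$ to get $\tilde{\ell}(v) = \ell_1(m_1) - \ell_1(m_2) = \lim \tilde{\ell}(v_n) = 0$. If you want to complete your write-up, you should replace your estimate-based sketch with an argument of this compactness type (and you would still need to justify that the lattice parts $v_n^{\lor}, v_n^{\land}$ remain in a weak*-compact set, which is where the real work lies).
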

	
	To prove this theorem, we break it up into several parts, attaining the extension $\tilde{\ell}$ as the final step of a few subsequent extensions of $\ell$.
	
	\begin{Lem}\label{First extension}
		Let $K \subseteq \mathcal{S}^G$ be a compact metrizable simplex, and let $\ell : K \to \mathbb{R}$ be a continuous affine functional. Let $P = \left\{ c \phi : c \in \mathbb{R}_{\geq 0}, \phi \in K \right\}$. Then there exists a continuous functional $\ell_1 : P \to \mathbb{R}$ satisfying the following conditions for all $f_1, f_2 \in P ; c \in \mathbb{R}_{\geq 0}$:
		\begin{enumerate}[label=(\alph*)]
			\item $\ell_1(c f_1) = c \ell_1(f_1)$,
			\item $\ell_1(f_1 + f_2) = \ell_1(f_1) + \ell_2(f_2)$,
			\item $\ell_1 \vert_K = \ell$.
		\end{enumerate}
	\end{Lem}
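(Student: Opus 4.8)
The plan is to exploit the fact that $K \subseteq \mathcal{S}^G$ lies in the affine hyperplane $\{\phi \in \mathfrak{R}^\natural : \phi(1) = 1\}$, which misses the origin; so every nonzero $f \in P$ has a \emph{unique} representation $f = c\phi$ with $c \in \mathbb{R}_{>0}$ and $\phi \in K$, namely $c = f(1)$ and $\phi = f/f(1)$ (and $f = 0$ iff $c = 0$). I would then simply define $\ell_1 : P \to \mathbb{R}$ by $\ell_1(0) = 0$ and $\ell_1(f) = f(1)\,\ell\!\left(f/f(1)\right)$ for $f \neq 0$, and verify the three algebraic properties together with continuity.

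Properties (a) and (c) should be immediate from this definition: for $\phi \in K$ we have $\phi(1) = 1$, so $\ell_1(\phi) = \ell(\phi)$; and for $c > 0$ and $f \neq 0$ the canonical representation of $cf$ is $\bigl(c f(1)\bigr)\cdot\bigl(f/f(1)\bigr)$, whence $\ell_1(cf) = c\,\ell_1(f)$, the cases $c = 0$ or $f = 0$ being trivial. For additivity the only nontrivial case is $f_1, f_2 \neq 0$; writing $c_i = f_i(1) > 0$ and $\phi_i = f_i/c_i \in K$, convexity of $K$ gives $\frac{f_1+f_2}{c_1+c_2} = \frac{c_1}{c_1+c_2}\phi_1 + \frac{c_2}{c_1+c_2}\phi_2 \in K$, and then affineness of $\ell$ unwinds to
$$\ell_1(f_1 + f_2) = (c_1+c_2)\left(\tfrac{c_1}{c_1+c_2}\ell(\phi_1) + \tfrac{c_2}{c_1+c_2}\ell(\phi_2)\right) = c_1\ell(\phi_1) + c_2\ell(\phi_2) = \ell_1(f_1) + \ell_1(f_2).$$

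The step I expect to require the most care is the weak*-continuity of $\ell_1$, since the defining formula degenerates at the origin. Away from $0$ this is routine: $f \mapsto f(1)$ is weak*-continuous and strictly positive on $P \setminus \{0\}$, so $f \mapsto f/f(1)$ is continuous into $K$, and composing with the continuous functional $\ell$ and then multiplying by $f \mapsto f(1)$ shows $\ell_1$ is continuous on $P \setminus \{0\}$. At the origin, I would use that $\ell$ is bounded on the compact set $K$, say $|\ell| \leq M$, so that $|\ell_1(f)| \leq M f(1)$ for every $f \in P$; since $f \mapsto f(1)$ is weak*-continuous and vanishes at $0$, this forces $\ell_1(f) \to 0$ as $f \to 0$ in $P$, giving continuity at the origin and hence on all of $P$. (I note that this lemma uses neither the metrizability nor the lattice structure of $K$; those should enter only in the later stages of the proof of Theorem \ref{Extension Theorem}.)
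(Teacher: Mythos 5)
Your proposal is correct and follows essentially the same route as the paper's proof: the same positively homogeneous extension $\ell_1(c\phi) = c\,\ell(\phi)$ (equivalently $\ell_1(f) = f(1)\,\ell(f/f(1))$ using the uniqueness of the representation coming from evaluation at $1$), the same convexity-plus-affineness computation for additivity, and the same continuity argument resting on the weak*-continuity of $f \mapsto f(1)$ together with the boundedness of $\ell$ on the compact set $K$ (your uniform bound $|\ell_1(f)| \leq M f(1)$ near the origin is a slightly cleaner packaging of the paper's sequential estimate). Your closing observation that neither metrizability nor the simplex structure of $K$ is used in this step is also accurate.
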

	
	\begin{proof}
		Note that every nonzero element of $P$ can be expressed uniquely as $c \phi$ for some $c \in \mathbb{R}_{\geq 0} \setminus \{0\} , \phi \in K$. As such we define
		$$\ell_1(c \phi) = \begin{cases}
			c \ell(\phi)	& c > 0 \\
			0	& c = 0
		\end{cases}$$
		It is immediately clear that this $\ell_1$ satisfies conditions (a) and (c), leaving only (b) to check.
		
		Now, suppose that $f_1 = c_1 \phi_1, f_2 = c_2 \phi_2$ for some $\phi_1, \phi_2 \in K ; c_1, c_2 \in \mathbb{R}_{\geq 0}$. Consider first the case where at least one of $c_1, c_2$ are nonzero.
		Then
		\begin{align*}
			f_1 + f_2	& = c_1 \phi_1 + c_2 \phi_2	\\
			& = (c_1 + c_2) \left( \frac{c_1}{c_1 + c_2} \phi_1 + \frac{c_2}{c_1 + c_2} \phi_2 \right) \\
			\Rightarrow \ell_1(f_1 + f_2)	& = \ell_1 \left( c_1 \phi_1 + c_2 \phi_2 \right)	\\
			& = (c_1 + c_2) \ell \left( \frac{c_1}{c_1 + c_2} \phi_1 + \frac{c_2}{c_1 + c_2} \phi_2 \right) \\
			& = (c_1 + c_2) \left( \frac{c_1}{c_1 + c_2} \ell(\phi_1) + \frac{c_2}{c_1 + c_2} \ell(\phi_2) \right)	& \textrm{(because $\ell$ is affine)} \\
			& = c_1 \ell(\phi_1) + c_2 \ell(\phi_2) \\
			& = \ell_1(c_1 \phi_1) + \ell_1(c_2 \phi_2) \\
			& = \ell_1(f_1) + \ell_1(f_2) .
		\end{align*}
		In the event that $c_1 = c_2 = 0$, then the additivity property attains trivially.
		
		It remains now to show that $\ell_1$ is continuous. We will check continuity at nonzero points in $P$, and then at $0 \in P$. First, consider the case where $c \phi \in P \setminus \{0\}$, and $c \in \mathbb{R}_{\geq 0}, \phi \in K$. Suppose that $(c_n \phi_n)_n$ is a sequence in $P$ converging in the weak*-topology to $c \phi$. We claim that $c_n \to c$ in $\mathbb{R}$, and $\phi_n \to \phi$ in the weak*-topology.
		
		We first observe that $(c_n \phi_n)(1) = c_n$, so $(c_n)_{n}$ converges in $\mathbb{R}_{\geq 0}$ to $c$, meaning in particular that for sufficiently large $n$, we have that $c_n \in \left[ \frac{c}{2} , \frac{3c}{2} \right] $. Now, if $\lambda : \mathfrak{R} \to \mathbb{R}$ is a norm-continuous linear functional, then
		\begin{align*}
			\lambda(\phi_n)	& = \frac{1}{c_n} \lambda(c_n \phi_n) \\
			& \to \frac{1}{c} \lambda(c \phi) \\
			& = \lambda(\phi) .
		\end{align*}
		Therefore $c_n \to c , \phi_n \to \phi$. Thus we can compute
		\begin{align*}
			\left| \ell_1(c \phi) - \ell_1(c_n \phi_n) \right|	& \leq \left| \ell_1(c \phi) - \ell_1(c_n \phi) \right| + \left| \ell_1(c_n \phi) - \ell_1(c_n \phi_n) \right| \\
			& = |c - c_n| \cdot \left| \ell(\phi) \right| + |c_n| \cdot \left| \ell(\phi) - \ell(\phi_n) \right| \\
			& \leq |c - c_n| \left( \sup_{\phi \in K} |\ell(\phi)| \right) + \frac{3c}{2} \left| \ell(\phi) - \ell(\phi_n) \right| \\
			& \to 0 ,
		\end{align*}
		where the supremum invoked must exist because $K$ is weak*-compact, and $|\ell(\phi) - \ell(\phi_n)| \to 0$ because $\ell$ is weak*-continuous.
		
		Now, suppose that $(c_n \phi_n)_n$ converges to $0$. Then again we have that $c_n \to 0$ by the same argument used above (i.e. $c_n = (c_n \phi_n)(1)$). Therefore
		\begin{align*}
			\left| \ell_1 (c_n \phi_n) \right|	& = |c_n| \cdot \left| \ell (\phi_n) \right| \\
			& \leq |c_n| \left( \sup_{\phi \in K} |\ell(\phi)| \right) \\
			& \to 0 .
		\end{align*}
		
		We can thus conclude that $\ell_1$ is weak*-continuous.
	\end{proof}
	
	\begin{Lem}\label{Second extension}
		Let $\ell_1, P$ be as in Lemma \ref{First extension}, and let $V = P - P$. Then there exists a continuous linear functional $\tilde{\ell} : V \to \mathbb{R}$ such that $\tilde{\ell} \vert_{P} = \ell_1$.
	\end{Lem}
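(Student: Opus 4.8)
The plan is to define $\tilde\ell$ in the only way consistent with linearity and the requirement $\tilde\ell\vert_P = \ell_1$, namely $\tilde\ell(f_1 - f_2) := \ell_1(f_1) - \ell_1(f_2)$ for $f_1, f_2 \in P$, and then verify in turn that this is well-defined, linear, and continuous. Since $P$ is a convex cone, every $v \in V$ does admit such a representation. For well-definedness, if $f_1 - f_2 = g_1 - g_2$ with $f_i, g_i \in P$, then $f_1 + g_2 = g_1 + f_2$ (both sides lying in $P$), and applying the additivity property (b) of $\ell_1$ to each side gives $\ell_1(f_1) + \ell_1(g_2) = \ell_1(g_1) + \ell_1(f_2)$, whence $\ell_1(f_1) - \ell_1(f_2) = \ell_1(g_1) - \ell_1(g_2)$. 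Taking $f_2 = 0$ and using property (a) at $c = 0$ shows $\tilde\ell\vert_P = \ell_1$.

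Linearity is then routine: additivity of $\tilde\ell$ follows from the identity $(f_1 - f_2) + (g_1 - g_2) = (f_1 + g_1) - (f_2 + g_2)$ together with the additivity of $\ell_1$; for homogeneity, the case $c \geq 0$ is immediate from property (a), while for $c < 0$ one writes $c(f_1 - f_2) = (-c) f_2 - (-c) f_1$ with $-c > 0$ and again invokes (a). Thus $\tilde\ell$ is a well-defined $\mathbb{R}$-linear functional on $V$ extending $\ell_1$.

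The substantive point is continuity, and this is where the hypothesis that $K$ is a simplex enters. Since $K$ is a simplex, the cone $P$ induces a lattice order on $V = P - P$, so each $v \in V$ decomposes as $v = v^+ - v^-$ with $v^+ := v \vee 0 \in P$ and $v^- := (-v) \vee 0 \in P$. Because $\ell$ is continuous on the compact set $K$, the quantity $M := \sup_{\phi \in K} |\ell(\phi)|$ is finite, and property (a) gives $|\ell_1(c \phi)| = c |\ell(\phi)| \leq M \| c \phi \|$ for all $c \geq 0$, $\phi \in K$; hence $|\tilde\ell(v)| \leq |\ell_1(v^+)| + |\ell_1(v^-)| \leq M(\| v^+ \| + \| v^- \|)$ for every $v \in V$. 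From here continuity of $\tilde\ell$ follows by localizing: on each weak*-compact slice $[0,r] \cdot K - [0,r] \cdot K$ the functional $\tilde\ell$ is weak*-continuous — a net in such a slice has, by compactness of $[0,r] \cdot K$, a subnet along which the two summands converge in $P$, and the weak*-continuity of $\ell_1$ established in Lemma \ref{First extension} passes to the limit — and the estimate above together with the lattice decomposition lets one reduce the general case to these slices. I expect the main obstacle to be precisely this last reduction: controlling the decomposition $v \mapsto (v^+, v^-)$ (equivalently, comparing the base norm $v \mapsto \| v^+ \| + \| v^- \|$ furnished by the simplex $K$ with the ambient structure on $V$) uniformly enough to promote slice-wise continuity to continuity on all of $V$. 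The simplex structure of $K$ is exactly what makes this decomposition available and keeps $v^+, v^-$ inside $P$.
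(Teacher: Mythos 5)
Your construction of $\tilde\ell$ as a well-defined linear extension of $\ell_1$ is correct, and it is essentially the paper's argument read in the opposite direction: the paper defines $\tilde\ell(v) = \ell_1(v^+) - \ell_1(v^-)$ via the lattice decomposition supplied by the simplex structure and then verifies that this agrees with $\ell_1(f) - \ell_1(g)$ for \emph{any} representation $v = f - g$ with $f, g \in P$, whereas you define $\tilde\ell$ by an arbitrary representation and verify well-definedness; the two are interchangeable, and your linearity check is fine.

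The continuity argument, however, contains a genuine gap --- the very one you flag yourself. The continuity needed is weak*-continuity: $V$ carries the weak*-topology inherited from $\mathfrak{R}^\natural$, and the purpose of the lemma (in the proof of Theorem \ref{Simplices and exposed faces}) is to produce a weak*-continuous functional that can subsequently be extended to all of $\mathfrak{R}^\natural$ and represented as evaluation at some $x \in \mathfrak{R}$. Your estimate $|\tilde\ell(v)| \leq M\left( \| v^+ \| + \| v^- \| \right)$ gives only norm-boundedness, and a norm-bounded linear functional on a subspace of a dual space need not be weak*-continuous; consequently the ``reduction to weak*-compact slices'' is not a routine localization but the entire content of the claim, and it is left unproven. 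The paper closes this by a different route: by \cite[Theorem 1.18]{RudinFunctional} it suffices to show that $\ker \tilde\ell$ is closed in $V$. Given a sequence $(v_n)_{n=1}^\infty$ in $\ker \tilde\ell$ converging weak* to $v$, the Uniform Boundedness Principle permits rescaling into the unit ball; Banach--Alaoglu together with the separability of $\mathfrak{A}$ (which makes the dual unit ball weak*-metrizable, so that sequential arguments suffice) yields a subsequence along which $v_{n_j}^+ \to m_1$ and $v_{n_j}^- \to m_2$ with $m_1, m_2 \in P$; then $v = m_1 - m_2$, so $\tilde\ell(v) = \ell_1(m_1) - \ell_1(m_2) = \lim_j \tilde\ell \left( v_{n_j} \right) = 0$. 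Note that this argument needs only the weak*-compactness of bounded sets and the closedness of the cone $P$, and entirely sidesteps the uniform control of the map $v \mapsto (v^+, v^-)$ that you identify as the obstacle; if you wish to complete your version rather than adopt the paper's, this compactness extraction is precisely the missing ingredient.
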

	
	\begin{proof}
		Define $\tilde{\ell} : V \to \mathbb{R}$ by
		$$\tilde{\ell}(v) = \ell_1 \left( v^+ \right) - \ell_1 \left( v^- \right),$$
		where $v^+, v^-$ are meant in the sense of the lattice structure $V$ possesses by virtue of $K$ being a simplex.
		
		Our first claim is that if $f, g \in P$ such that $v = f - g$, then $\tilde{\ell}(v) = \ell_1(f) - \ell_1(g)$. To see this, we observe that $f + v^- = g + v^+ \in P$. Therefore
		\begin{align*}
			\ell_1 \left( f + v^- \right)	& = \ell_1 \left( g + v^+ \right) \\
			= \ell_1(f) + \ell_1 \left( v^- \right)	& = \ell_1(g) + \ell_1 \left( v^+ \right) \\
			\Rightarrow \ell_1(f) - \ell_1(g)	& = \ell_1 \left( v^+ \right) - \ell_1 \left( v^- \right) \\
			& = \tilde{\ell}(v) .
		\end{align*}
		This makes linearity fairly straightforward to check. First, to confirm additivity, let $v, w \in V$. Then $v + w = \left( v^+ + w^+ \right) - \left( v^- + w^- \right)$, where $v^+ + w^+, v^- + w^- \in P$. Thus
		\begin{align*}
			\tilde{\ell}(v + w)	& = \ell_1 \left( v^+ + w^+ \right) - \ell_1 \left( v^- + w^- \right) \\
			& = \ell_1 \left( v^+ \right) + \ell_1 \left( w^+ \right) - \ell_1 \left( v^- \right) - \ell_1 \left( w^- \right) \\
			& = \ell_1 \left( v^+ \right) - \ell_1 \left( v^- \right) + \ell_1\left( w^+ \right) - \ell_1 \left( w^- \right) \\
			& = \tilde{\ell}(v) + \tilde{\ell}(w) .
		\end{align*}
		To check homogeneity, let $c \in \mathbb{R}$. If $c \geq 0$, then $cv^+, c v^- \in P$, and $c v^+ - c v^- = c v$; on the other hand, if $c \leq 0$, then $- c v^- , - c v^+ \in P$, and $c v = - c v^- + c v^+$. In both cases, homogeneity is straightforward to show. This proves that $\tilde{\ell}$ is linear.
		
		It is also quick to show that $\tilde{\ell} \vert_{P} = \ell_1$, since if $v \in P$, then $v = v^+$, so $\tilde{\ell}(v) = \ell_1 \left( v^+ \right) - 0 = \ell_1(v)$.
		
		It remains now to show that $\tilde{\ell}$ is continuous. By \cite[Theorem 1.18]{RudinFunctional}, it will suffice to show that $\ker \tilde{\ell}$ is weak*-closed. To prove the kernel is closed, let $(v_n)_{n = 1}^\infty$ be a sequence in $\ker \tilde{\ell}$ converging in the weak*-topology to $v \in V$. By the Uniform Boundedness Principle, it follows that $\sup_{n} \| v_n \| < \infty$. By rescaling, we can assume without loss of generality that $\|v_n\| \leq 1$ for all $n \in \mathbb{N}$, and since the unit ball $B \subseteq V$ is weak*-closed by Banach-Alaoglu, we can infer that $\| v \| \leq 1$.
		
		Since the unit ball $B$ is weak*-compact, it follows that the sequences $\left( v_n^+ \right)_{n = 1}^\infty , \left( v_n^- \right)_{n = 1}^\infty$ have convergent subsequences. Let $(n_j)_{j = 1}^\infty$ be a subsequence along which $v_{n_j}^+ \to m_1 \in P, v_{n_j}^- \to m_2 \in P$. Then if $x \in \mathfrak{R}$, then
		\begin{align*}
			v(x)	& = \lim_{n \to \infty} v_n(x) \\
			& = \lim_{n \to \infty} \left( v_n^+(x) - v_n^-(x) \right) \\
			& = \lim_{j \to \infty} \left( v_{n_j}^+(x) - v_{n_j}^-(x) \right) \\
			& = \left( \lim_{j \to \infty} v_{n_j}^+(x) \right) - \left( \lim_{j \to \infty} v_{n_j}^-(x) \right) \\
			& = m_1(x) - m_2(x) .
		\end{align*}
		Therefore $v = m_1 - m_2$, so
		\begin{align*}
			\tilde{\ell}(v)	& = \tilde{\ell} (m_1) - \tilde{\ell} (m_2) \\
			& = \left( \lim_{j \to \infty} \tilde{\ell} \left( v_{n_j}^+ \right) \right) - \left( \lim_{j \to \infty} \tilde{\ell} \left( v_{n_j}^- \right) \right) \\
			& = \lim_{j \to \infty} \left( \tilde{\ell} \left( v_{n_j}^+ \right) - \tilde{\ell} \left( v_{n_j}^- \right) \right) \\
			& = \lim_{j \to \infty} \tilde{\ell} \left( v_{n_j} \right) \\
			& = \lim_{j \to \infty} 0	\\
			& = 0 .
		\end{align*}
		Therefore, we can conclude that $\tilde{\ell}$ is weak*-continuous.
	\end{proof}
	
	\begin{proof}[Proof of Theorem \ref{Extension Theorem}]
		This follows from Lemmas \ref{First extension} and \ref{Second extension}.
	\end{proof}
	
	\begin{proof}[Proof of Theorem \ref{Simplices and exposed faces}]
		Let $F \subseteq K$ be a closed face of $K$. By Lemma \ref{Closed faces are exposed}, the face $F$ is exposed, so let $\ell : K \to \mathbb{R}$ be a weak*-continuous affine functional such that
		\begin{align*}
			\ell(k)	& = 0	& (\forall k \in F) , \\
			\ell(k)	& < 0	& (\forall k \in K \setminus F) .
		\end{align*}
		Set
		$$V = \left\{ c_1 \phi_1 - c_2 \phi_2 : c_1, c_2 \in \mathbb{R}_{\geq 0} ; \phi_1, \phi_2 \in K \right\} ,$$
		and let $\tilde{\ell} : V \to \mathbb{R}$ be a continuous linear extension of $\ell$ to $V$ whose existence is promised by Theorem \ref{Extension Theorem}. We can then extend $\tilde{\ell} : V \to \mathbb{R}$ to a weak*-continuous linear functional $\ell' : \mathfrak{R}^\natural \to \mathbb{R}$ \cite[Theorem 3.6]{PositiveOperators}. There thus exists some $x \in \mathbb{R}$ such that $\ell'(\phi) = \phi(x)$ for all $\phi \in \mathfrak{R}^\natural$ \cite[Theorem 5.2]{Baggett}. In particular, we have $\ell'(v) = v(x)$ for all $v \in V$. Therefore $F = K_\mathrm{max}(x)$.
		
		The converse is contained in Proposition \ref{Nonempty maximizing states}.
	\end{proof}
	
	In particular, we can recover the following corollary.
	
	\begin{Cor}\label{Uniquely maximizing states}
		If $\phi \in \partial_e K$, then there exists $x \in \mathfrak{R}$ such that $\phi$ is uniquely $(x \vert K)$-maximizing, i.e. such that $\{ \phi \} =  K_\mathrm{max}(x)$.
	\end{Cor}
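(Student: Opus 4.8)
The plan is to derive this immediately from Theorem \ref{Simplices and exposed faces} by checking that the singleton $\{\phi\}$ is a closed face of $K$. First I would verify that $\{\phi\}$ is a face: suppose $t \in (0,1)$ and $k_1, k_2 \in K$ satisfy $t k_1 + (1-t) k_2 = \phi$; since $\phi \in \partial_e K$ and $t \notin \{0,1\}$, the definition of an extreme point forces $k_1 = k_2$, and then $k_1 = k_2 = \phi$, so both lie in $\{\phi\}$. Hence $\{\phi\}$ satisfies the defining condition of a face. It is obviously closed (singletons are closed in the Hausdorff space $\mathcal{S}^G$).

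Next I would observe that $K$ satisfies the hypotheses of Theorem \ref{Simplices and exposed faces}: it is a compact simplex contained in $\mathcal{S}^G$, and since $\mathcal{S}^G$ is compact metrizable by Lemma \ref{Simplices}(i), $K$ is itself compact metrizable. Applying Theorem \ref{Simplices and exposed faces} to the closed face $F = \{\phi\}$ produces an element $x \in \mathfrak{R}$ with $\{\phi\} = K_\mathrm{max}(x)$, which is exactly the assertion that $\phi$ is uniquely $(x \vert K)$-maximizing.

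There is essentially no obstacle here beyond the bookkeeping above; the content has already been absorbed into Theorem \ref{Simplices and exposed faces} (and, underneath it, Lemma \ref{Closed faces are exposed} together with the extension machinery of Theorem \ref{Extension Theorem}). The only point requiring a moment's care is the elementary fact that an extreme point gives rise to a singleton face, and that metrizability of $K$ is inherited from $\mathcal{S}^G$ so that Lemma \ref{Closed faces are exposed} is applicable inside the proof of Theorem \ref{Simplices and exposed faces}.
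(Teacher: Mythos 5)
Your proposal is correct and follows exactly the paper's route: observe that $\{\phi\}$ is a closed face of $K$ and invoke Theorem \ref{Simplices and exposed faces}. You simply spell out the elementary verification that an extreme point yields a singleton closed face, which the paper takes as given.
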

	
	\begin{proof}
		The singleton $\{\phi\}$ is a closed face, and by Lemma \ref{Closed faces are exposed} is therefore an exposed face. Apply Theorem \ref{Simplices and exposed faces}.
	\end{proof}
	
	We have developed the language of ergodic optimization here in a somewhat novel way, where we speak not of $x$-maximizing states \emph{simpliciter} -as one would speak of $f$-maximizing measures in the commutative theory we draw inspiration from- but of a state that is maximizing relative to a compact convex subset $K$ of $\mathcal{S}^G$, especially a compact simplex $K$. This means we can consider ergodic optimization problems over different \emph{types} of states.
	
	Since our machinery works best for cases where $K$ is a simplex, we will conclude this section by describing some situations where $\mathcal{S}^G$ is a compact metrizable simplex.
	
	For each $\phi \in \mathcal{S}^G$, let $\pi_\phi : \mathfrak{A} \to \mathscr{B} (\mathscr{H}_\phi)$ be the GNS representation corresponding to $\phi$. Define a unitary representation $u_\phi : G \to \mathbb{U}(\mathscr{H}_\phi)$ of $G$ by
	$$u_\phi(g) \pi_\phi(a) = \pi_\phi \left( \Theta_{g^{-1}} (a) \right) ,$$
	extending this from $\pi_\phi(\mathfrak{A})$ to $\mathscr{H}_\phi$. Set
	$$E_\phi = \left\{ v \in \mathscr{H}_\phi : u_\phi(v) = v \textrm{ for all } g \in G \right\} .$$
	Let $P_\phi : \mathscr{H}_\phi \twoheadrightarrow E_\phi$ be the orthogonal projection (in the functional-analytic sense) of $\mathscr{H}_\phi$ onto $E_\phi$. We call the C*-dynamical system $(\mathfrak{A}, G, \Theta)$ a \emph{$G$-abelian} system if for every $\phi \in \mathcal{S}^G$, the family of operators $\left\{ P_\phi \pi_\phi(a) P_\phi \in \mathscr{B}(\mathscr{H}_\phi) : a \in \mathfrak{A} \right\}$ is mutually commutative.
	
	We record here a handful of germane facts about $G$-abelian systems.
	
	\begin{Prop}
		If $(\mathfrak{A}, G, \Theta)$ is $G$-abelian, then $\mathcal{S}^G$ is a simplex.
	\end{Prop}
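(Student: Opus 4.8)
This is a classical fact about $G$-abelian systems, and I would prove it by the von Neumann algebraic route, converting the simplex property into an abelianness statement about a relative commutant. First I would reduce to order intervals in the invariant cone. By the Jordan-decomposition argument already used in the proof of Lemma~\ref{Simplices}, the real space $E^G$ of $\Theta$-invariant self-adjoint functionals equals $P - P$, where $P = \{c\phi : c \geq 0,\ \phi \in \mathcal{S}^G\}$ is the cone of positive invariant functionals, and $\mathcal{S}^G$ is the base of $P$ cut out by $\phi \mapsto \phi(1)$. So by Definition~\ref{Simplex definition} it suffices to show that $P$ induces a lattice order on $E^G$, and a standard localization argument reduces this to showing that for each $\omega \in \mathcal{S}^G$ the order interval $I_\omega = \{\psi : 0 \leq \psi \leq \omega\}$ is a lattice in the induced order: any two elements of $P$ lie in a common such interval $I_{p_1 + p_2}$, infima computed inside an order interval are genuine infima in $E^G$, and the existence of positive parts upgrades this to the lattice property on all of $E^G$.

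Next I would identify $I_\omega$ through the GNS construction. Fix $\omega \in \mathcal{S}^G$ with GNS data $(\mathscr{H}_\omega, \pi_\omega, \Omega_\omega)$, let $u_\omega$ be the implementing unitary representation of $G$ (it fixes $\Omega_\omega$ because $\omega$ is invariant), and let $E_\omega$, $P_\omega$ be as in the discussion preceding the statement. The Radon--Nikodym theorem for states dominated by $\omega$ yields an affine order-isomorphism $\psi \mapsto T_\psi$ from $I_\omega$ onto $\{T \in \pi_\omega(\mathfrak{A})' : 0 \leq T \leq 1\}$, determined by $\psi(a) = \langle \Omega_\omega, \pi_\omega(a) T_\psi \Omega_\omega \rangle$; both the order-isomorphism property and the uniqueness of $T_\psi$ rest on $\Omega_\omega$ being cyclic, hence separating, for $\pi_\omega(\mathfrak{A})'$. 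Using $u_\omega(g)\Omega_\omega = \Omega_\omega$ and the covariance relation $u_\omega(g)\pi_\omega(a)u_\omega(g)^* = \pi_\omega(\Theta_{g^{-1}} a)$, one checks that $\psi$ is $\Theta$-invariant precisely when $T_\psi \in u_\omega(G)'$. Hence $I_\omega$ is affinely order-isomorphic to the set of positive contractions in the von Neumann algebra $\mathfrak{N}_\omega := \pi_\omega(\mathfrak{A})' \cap u_\omega(G)'$.

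It then remains to show $\mathfrak{N}_\omega$ is abelian, which is the heart of the matter. If it is, then $\{T \in \mathfrak{N}_\omega : 0 \leq T \leq 1\}$ is an order interval in the self-adjoint part of an abelian von Neumann algebra, hence a lattice (a sublattice of $L^\infty_{\mathbb{R}}$ under pointwise $\max$ and $\min$), so $I_\omega$ is a lattice and we are done. To prove $\mathfrak{N}_\omega$ abelian: since $P_\omega \in u_\omega(G)''$, every $T \in \mathfrak{N}_\omega$ commutes with $P_\omega$ and so restricts to an operator $T|_{E_\omega} \in \mathscr{B}(E_\omega)$; a short computation shows $T|_{E_\omega}$ commutes with $P_\omega \pi_\omega(a) P_\omega|_{E_\omega}$ for every $a$, hence with the von Neumann algebra $\mathfrak{B}_\omega \subseteq \mathscr{B}(E_\omega)$ that these operators generate. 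Now $\mathfrak{B}_\omega$ is abelian --- this is exactly the $G$-abelian hypothesis, read in bicommutant form --- and $\Omega_\omega$ is a cyclic vector for $\mathfrak{B}_\omega$, because $\Omega_\omega$ is cyclic for $\pi_\omega(\mathfrak{A})$ and compression by $P_\omega$ sends dense subsets of $\mathscr{H}_\omega$ to dense subsets of $E_\omega$. Since an abelian von Neumann algebra with a cyclic vector is maximal abelian, $\mathfrak{B}_\omega' = \mathfrak{B}_\omega$, so $T|_{E_\omega} \in \mathfrak{B}_\omega$ for every $T \in \mathfrak{N}_\omega$ and these restrictions mutually commute; as $T \mapsto T|_{E_\omega}$ is injective ($T|_{E_\omega} = 0$ forces $T\Omega_\omega = 0$, and $\Omega_\omega$ is separating for $\pi_\omega(\mathfrak{A})' \supseteq \mathfrak{N}_\omega$), $\mathfrak{N}_\omega$ is itself abelian.

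I expect this last step to be the main obstacle: pinning down the correct abelian algebra and, in particular, verifying that $\mathfrak{B}_\omega$ is \emph{maximal} abelian so that the restrictions of elements of $\mathfrak{N}_\omega$ fall back inside it --- this is precisely where the cyclicity of $\Omega_\omega$ and the exact form of the $G$-abelian assumption are used. By contrast, the reduction to order intervals of the invariant cone and the Radon--Nikodym bookkeeping are routine, as is the final transport of the lattice structure back to $I_\omega$.
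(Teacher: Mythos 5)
Your argument is correct, but there is nothing in the paper to compare it against: the paper's entire ``proof'' of this proposition is a citation to \cite[Theorem 3.1.14]{Sakai}. What you have written is, in effect, a correct reconstruction of that standard argument (it is essentially the proof one finds in Sakai and in Bratteli--Robinson): reduce the simplex property to showing that the order interval $[0,\omega]$ in the invariant cone is a lattice, identify $[0,\omega]$ via the Radon--Nikodym theorem with the positive contractions in $\mathfrak{N}_\omega = \pi_\omega(\mathfrak{A})' \cap u_\omega(G)'$, and then prove $\mathfrak{N}_\omega$ abelian by compressing to $E_\omega$, where the $G$-abelian hypothesis together with cyclicity of $\Omega_\omega$ makes the compressed algebra maximal abelian. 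You correctly locate the heart of the matter in that last step, and your treatment of it is sound: $P_\omega \in u_\omega(G)''$, the restriction map $T \mapsto T|_{E_\omega}$ is injective because $\Omega_\omega$ is separating for $\pi_\omega(\mathfrak{A})'$, and maximal abelianness of $\mathfrak{B}_\omega$ forces the restrictions to commute. The one place your sketch is thinner than it should be is the ``routine'' localization: to promote lattice operations in $[0,p_1+p_2]$ to genuine infima in all of $E^G$ one must check consistency as the dominating functional grows, i.e.\ that for $\omega \leq \omega'$ the infimum of $p_1, p_2$ computed in $\mathfrak{N}_\omega$ agrees with the one computed in $\mathfrak{N}_{\omega'}$ (this follows from the universal property of the infimum inside each order interval, since any lower bound $\chi \in E^G$ has $\chi^{\pm}$ dominated by a multiple of some single invariant positive functional); mere existence of Jordan decompositions does not by itself give the lattice property, as the full state space of $M_2(\mathbb{C})$ shows. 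With that point spelled out, your proof is complete, and it would be a self-contained substitute for the paper's external citation.
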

	
	\begin{proof}
		See \cite[Theorem 3.1.14]{Sakai}.
	\end{proof}
	
	\begin{Def}
		We call a system $(\mathfrak{A}, G, \Theta)$ \emph{asymptotically abelian} if there exists a sequence $(g_n)_{n = 1}^\infty$ in $G$ such that
		$$\left[ \Theta_{g_n} a, b \right] \stackrel{n \to \infty}{\to} 0 $$
		for all $a, b \in \mathfrak{A}$, where $\left[ \cdot , \cdot \right]$ is the Lie bracket $[x, y] = xy - yx$ on $\mathfrak{A}$.
	\end{Def}
	
	\begin{Prop}
		If $(\mathfrak{A}, G, \Theta)$ is asymptotically abelian, then it is also $G$-abelian.
	\end{Prop}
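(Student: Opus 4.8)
The plan is to fix an arbitrary invariant state $\phi \in \mathcal{S}^G$ and prove that the family $\{P_\phi \pi_\phi(a) P_\phi : a \in \mathfrak{A}\}$ is commutative; since $\phi$ is arbitrary, this is exactly $G$-abelianness. Two preliminary reductions come first. Because $a \mapsto P_\phi \pi_\phi(a) P_\phi$ is $\mathbb{C}$-linear and $\mathfrak{A} = \mathfrak{R} + i\mathfrak{R}$, it suffices to prove $[P_\phi \pi_\phi(a) P_\phi, P_\phi \pi_\phi(b) P_\phi] = 0$ for self-adjoint $a, b$; and since $\mathscr{H}_\phi$ is a complex Hilbert space, by polarization it suffices to prove $\langle \zeta, [P_\phi \pi_\phi(a) P_\phi, P_\phi \pi_\phi(b) P_\phi] \zeta \rangle = 0$ for every $\zeta \in E_\phi$. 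Throughout I use the standard properties of the covariant GNS data: $u_\phi$ implements the action (so $u_\phi(g) \pi_\phi(x) u_\phi(g)^* = \pi_\phi(\Theta_{g^{-1}} x)$), $P_\phi$ commutes with every $u_\phi(g)$ with in fact $u_\phi(g) P_\phi = P_\phi u_\phi(g) = P_\phi$ (so $P_\phi \in u_\phi(G)''$), and consequently $\langle \zeta_1, \pi_\phi(\Theta_g x) \zeta_2 \rangle = \langle \zeta_1, \pi_\phi(x) \zeta_2 \rangle$ for all $g \in G$, $x \in \mathfrak{A}$ and $\zeta_1, \zeta_2 \in E_\phi$ (equivalently $P_\phi \pi_\phi(\Theta_g x) P_\phi = P_\phi \pi_\phi(x) P_\phi$).

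Next I would bring in asymptotic abelianness. Fix a sequence $(g_n)$ witnessing it, and a self-adjoint $a \in \mathfrak{A}$. The sequence $(\pi_\phi(\Theta_{g_n} a))_n$ is bounded by $\|a\|$ in $\mathscr{B}(\mathscr{H}_\phi)$, so it has a weak-operator cluster point $\bar{a}$. This $\bar{a}$ is self-adjoint with $\|\bar{a}\| \leq \|a\|$ and lies in $\pi_\phi(\mathfrak{A})''$, being a weak-operator limit of operators in that von Neumann algebra; moreover $\|[\Theta_{g_n} a, b]\| \to 0$ gives $\|[\pi_\phi(\Theta_{g_n} a), \pi_\phi(b)]\| \to 0$ for every $b$, so $\bar{a}$ commutes with $\pi_\phi(\mathfrak{A})$, whence $\bar{a} \in \pi_\phi(\mathfrak{A})' \cap \pi_\phi(\mathfrak{A})'' = Z(\pi_\phi(\mathfrak{A})'')$, the centre. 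Finally $P_\phi \bar{a} P_\phi = P_\phi \pi_\phi(a) P_\phi$, because for $\zeta_1, \zeta_2 \in E_\phi$ the scalar $\langle \zeta_1, \pi_\phi(\Theta_{g_n} a) \zeta_2 \rangle$ equals $\langle \zeta_1, \pi_\phi(a) \zeta_2 \rangle$ for every $n$ by the invariance relation, so this persists in the limit.

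The delicate point, which I expect to be the crux, is that $\bar{a}$ need not commute with the inner projection $P_\phi$, and this blocks a direct computation of the compressed commutator. To get around it, note that $G$ acts on $\mathscr{B}(\mathscr{H}_\phi)$ by the maps $T \mapsto u_\phi(h) T u_\phi(h)^*$, which are norm-isometries and weak-operator continuous; the weak-operator-closed convex hull $K$ of the orbit $\{ u_\phi(h) \bar{a} u_\phi(h)^* : h \in G \}$ is weak-operator compact (it lies in the ball of radius $\|a\|$), convex, and $G$-invariant, so by the Ryll--Nardzewski fixed point theorem it contains a point $\hat{a}$ with $u_\phi(h) \hat{a} u_\phi(h)^* = \hat{a}$ for all $h$; that is, $\hat{a} \in u_\phi(G)'$, so $[\hat{a}, P_\phi] = 0$. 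Since $Z(\pi_\phi(\mathfrak{A})'')$ is weak-operator closed, convex, and $G$-invariant, $\hat{a} \in Z(\pi_\phi(\mathfrak{A})'')$; and since every orbit element has the same $E_\phi$-matrix elements as $\pi_\phi(a)$, so does every element of $K$ by affine weak-operator continuity, whence $P_\phi \hat{a} P_\phi = P_\phi \pi_\phi(a) P_\phi$. (When $G$ is amenable one may instead average the $u_\phi(h) \bar{a} u_\phi(h)^*$ along a F{\o}lner net, which gives the same thing more concretely.)

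Performing this construction for $b$ as well produces $\hat{b}$ with the analogous properties. Now $\hat{a}$ and $\hat{b}$ commute, being central; each commutes with $P_\phi$; and $P_\phi \hat{a} P_\phi = P_\phi \pi_\phi(a) P_\phi$, $P_\phi \hat{b} P_\phi = P_\phi \pi_\phi(b) P_\phi$. Using $P_\phi \hat{a} P_\phi = \hat{a} P_\phi$ and $P_\phi \hat{b} P_\phi = \hat{b} P_\phi$, we conclude
\[
[P_\phi \pi_\phi(a) P_\phi, P_\phi \pi_\phi(b) P_\phi] = [\hat{a} P_\phi, \hat{b} P_\phi] = (\hat{a} \hat{b} - \hat{b} \hat{a}) P_\phi = 0 .
\]
Since $\phi \in \mathcal{S}^G$ was arbitrary, $(\mathfrak{A}, G, \Theta)$ is $G$-abelian. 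The only genuinely substantial ingredient beyond routine bookkeeping is the fixed-point argument of the third paragraph, which upgrades the asymptotic limit $\bar{a}$ to an honest $u_\phi(G)$-invariant central element.
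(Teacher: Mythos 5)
The paper itself offers no argument here; it simply cites Sakai, so any self-contained proof is ``different from the paper.'' Your architecture is the right one: extract a weak-operator cluster point $\bar{a}$ of $(\pi_\phi(\Theta_{g_n}a))_n$, observe that it lies in $Z(\pi_\phi(\mathfrak{A})'')$ and has the same $E_\phi$-compression as $\pi_\phi(a)$, and then upgrade it to a $u_\phi(G)$-invariant central element. You correctly identify the upgrade as the crux, but your justification of it does not go through as written. Ryll--Nardzewski pairs norm-distality with compactness in the weak topology $\sigma(E, E^*)$ of the normed space in which the semigroup is noncontracting; that is what drives the dentability argument in the Namioka--Asplund proof. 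Your $K$ is compact only in the weak operator topology, i.e.\ weak*-compact in $\mathscr{B}(\mathscr{H}_\phi) = \mathscr{T}(\mathscr{H}_\phi)^*$, and since $\mathscr{B}(\mathscr{H}_\phi)$ is not reflexive (in infinite dimensions), bounded WOT-closed convex sets are not weakly compact. If you instead try to run the theorem with distality in the WOT, that is not implied by the conjugations being norm isometries. The F{\o}lner fallback only covers amenable $G$, while the proposition is stated for an arbitrary phase group with $\mathcal{S}^G \neq \emptyset$. So the key step is, as stated, unjustified.

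It is, however, repairable, precisely because you have already placed $K$ inside $Z := Z(\pi_\phi(\mathfrak{A})'')$. The cyclic vector $\Omega_\phi$ is separating for $\pi_\phi(\mathfrak{A})'$, so the vector state $\omega = \left< \Omega_\phi, \cdot\, \Omega_\phi \right>$ is faithful and normal on $Z$, and it is invariant under $\alpha_h = u_\phi(h)(\cdot)u_\phi(h)^*$ because $u_\phi(h)\Omega_\phi = \Omega_\phi$. Embedding $Z \hookrightarrow \mathcal{L}^2(Z, \omega)$, the set $K$ becomes a bounded, closed, convex subset of a Hilbert space (on bounded parts of $Z$ the $\sigma$-weak topology agrees with the weak $\mathcal{L}^2$ topology), invariant under the unitaries induced by the $\alpha_h$; the unique element of minimal $\mathcal{L}^2$-norm in $K$ is then the fixed point $\hat{a}$ you want, with no appeal to Ryll--Nardzewski. (This is an instance of the Kov\'{a}cs--Sz\H{u}cs theorem: the hypothesis that actually makes the averaging work is a faithful normal invariant state on the algebra being averaged, which a bare fixed-point theorem on $\mathscr{B}(\mathscr{H}_\phi)$ cannot see.) Alternatively, the textbook route replaces the whole detour by the Alaoglu--Birkhoff mean ergodic theorem, which for an arbitrary group of unitaries places $P_\phi$ in the strong closure of $\operatorname{co} u_\phi(G)$. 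With either repair, the remainder of your argument, including the final computation $[\hat{a}P_\phi, \hat{b}P_\phi] = (\hat{a}\hat{b} - \hat{b}\hat{a})P_\phi = 0$, is correct.
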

	
	\begin{proof}
		See \cite[Proposition 3.1.16]{Sakai}.
	\end{proof}
	
	\section{Unique ergodicity and gauges: the singly generated setting}\label{Singly generated UE}
	
	So far we have spoken about C*-dynamical systems, a noncommutative analog of a topological dynamical systems. But just as classical ergodic theory is often interested in the interplay between topological dynamical systems and the measure-theoretic dynamical systems they can be realized in, we are interested in questions about the interplay between C*-dynamical systems and the non-commutative measure-theoretic dynamical systems they can be realized in. To make this more precise, we must define the notion of a W*-dynamical system.
	
	A \emph{W*-probability space} is a pair $(\mathfrak{M}, \rho)$ consisting of a von Neumann algebra $\mathfrak{M}$ and a faithful tracial normal state $\rho$ on $\mathfrak{M}$. An \emph{automorphism} of a W*-probability space $(\mathfrak{M}, \rho)$ is a *-automorphism $T : \mathfrak{M} \to \mathfrak{M}$ such that $\rho \circ T = \rho$, i.e. an automorphism of $\mathfrak{M}$ which respects $\rho$. A \emph{W*-dynamical system} is a quadruple $(\mathfrak{M}, \rho, G, \Xi)$, where $(\mathfrak{M}, \rho)$ is a W*-probability space, and $\Xi : G \to \operatorname{Aut}(\mathfrak{M}, \rho)$ is an action of a discrete topological group $G$ (called the \emph{phase group}) on $\mathfrak{M}$ by $\rho$-preserving automorphisms of $\mathfrak{M}$, i.e. such that $\rho(\Xi_g x) = \rho(x)$ for all $g \in G, x \in \mathfrak{M}$. Importantly, if $(\mathfrak{M}, \rho, G, \Xi)$ is a W*-dynamical system, then $(\mathfrak{M}, G, \Xi)$ is automatically a W*-dynamical system.
	
	\begin{Rmk}
		Elsewhere in the literature, the term W*-dynamical system is typically used to refer to a more general construction, where the group $G$ is assumed to satisfy some topological conditions, and the action is assumed to be continuous in the strong operator topology, e.g. \cite{NoncommutativeJoinings}. Other authors use a yet more general definition, e.g. \cite[III.3.2]{Blackadar}. Since we are only interested in actions of discrete groups, we adopt a narrower definition. Our introduction of the W*-probability space is technically superfluous, but allows us to emphasize when certain properties of a W*-dynamical system $(\mathfrak{M}, \rho, G, \Xi)$ are intrinsic to $(\mathfrak{M}, \rho)$ instead of the action placed upon it. In particular, certain regularity properties arise when we assume that the Hilbert space $\mathcal{L}^2(\mathfrak{M}, \rho)$ (defined below) is separable, analogous to some of the regularity properties that arise in classical ergodic theory when we assume that the underlying probability space is standard.
	\end{Rmk}
	
	\begin{Def}
	Given a W*-probability space, we define $\mathcal{L}^2(\mathfrak{M}, \rho)$ to be the Hilbert space defined by completing $\mathfrak{M}$ with respect to the inner product $\left< x , y \right>_\rho = \rho \left( y^* x \right)$, i.e. the Hilbert space associated with the faithful GNS representation of $\mathfrak{M}$ induced by $\rho$.
	\end{Def}
	
	Finally, we introduce the notion of a C*-model, intending to generalize the notion of a topological model from classical ergodic theory to this noncommutative setting.
	
	\begin{Def}
		Let $(\mathfrak{M}, \rho, G, \Xi)$ be a W*-dynamical system. A \emph{C*-model} of $(\mathfrak{M}, \rho, G, \Xi)$ is a quadruple $(\mathfrak{A}, G, \Theta; \iota)$ consisting of a C*-dynamical system $(\mathfrak{A}, G, \Theta)$ and a *-homomorphism $\iota : \mathfrak{A} \to \mathfrak{M}$ such that
		\begin{enumerate}[label=(\alph*)]
			\item $\iota(\mathfrak{A})$ is dense in the weak operator topology of $\mathfrak{M}$,
			\item $\Xi_g \left( \iota(\mathfrak{A}) \right) = \iota(\mathfrak{A})$ for all $g \in G$, and
			\item $\Xi_g \circ \iota = \iota \circ \Theta_g$ for all $g \in G$.
		\end{enumerate}
		We call the C*-model $(\mathfrak{A}, G, \Theta; \iota)$ \emph{faithful} if $\iota$ is also injective.
	\end{Def}
	
	Before continuing, we want to remark that we can turn any C*-model into a faithful C*-model through a quotienting process. If $\iota$ was not injective, then we could instead consider $\tilde{\iota} : \mathfrak{A} / \ker \iota \hookrightarrow \mathfrak{M}$. In the case where $\mathfrak{A}$ is commutative, this quotienting process corresponds (via the Gelfand-Naimark Theorem) to taking a measure-theoretic dynamical system and restricting to the support of the resident probability measure. To see this, let $\mathfrak{A} = C(X)$, where $X$ is a compact metrizable topological space, and let $\mathfrak{M} = L^\infty(X, \mu)$ for some Borel probability measure $\mu$. Let $\iota : C(X) \to L^\infty(X, \mu)$ be the (not necessarily injective) map that maps a continuous function on $X$ to its equivalence class in $L^\infty(X, \mu)$. It can be seen that $f \in \ker \iota$ if and only if the open set $\left\{ x \in X : f(x) \neq 0 \right\}$ is of measure $0$, or equivalently if $f \vert_{\operatorname{supp}(\mu)} = 0$, and in particular that $\iota$ is injective if and only if $\mu$ is \emph{strictly positive} (i.e. $\mu$ assigns positive measure to all nonempty open sets). As such, we can identify $C(X) / \ker \iota$ with $C(\operatorname{supp}(\mu))$. Let $Y = \operatorname{supp}(\mu)$ denote the support of $\mu$ on $X$, and let $\pi : C(X) \twoheadrightarrow C(Y)$ be the quotient map (which corresponds to a restriction from $X$ to $Y$, i.e. $\pi f = f \vert_Y$). Then algebraically, we have a diagram
	$$
	\begin{tikzcd}
		C(X) \arrow[r, two heads, "\pi"] \arrow[rd, "\iota"]	& C(Y) \arrow[d, hook, dotted, "\tilde{\iota}"] \\
		& L^\infty(X, \mu)
	\end{tikzcd}
	$$
	So in the commutative case, we can make $\iota : C(X) \to L^\infty(X, \mu)$ injective by looking at $\tilde{\iota} : C(Y) \to L^\infty(Y, \mu) \cong L^\infty(X, \mu)$, i.e. by using the support $Y$ to model $(Y, \mu) \cong (X, \mu)$.
	
	Importantly, so long as $\mathcal{L}^2(\mathfrak{M}, \rho)$ is separable, any W*-dynamical system $(\mathfrak{M}, \rho, G, \Xi)$ will admit a faithful separable C*-model. To construct such a C*-model, it suffices to take some separable C*-subalgebra $\mathfrak{B} \subseteq \mathfrak{M}$ which is dense in $\mathfrak{M}$ with respect to the weak operator topology, then let $\mathfrak{A}$ be the norm-closure of the span of $\bigcup_{g \in G} \left( \Xi_g \mathfrak{B} \right)$. We then define $\Theta_g = \Xi_g \vert_{\mathfrak{A}}$ and let $\iota : \mathfrak{A} \hookrightarrow \mathfrak{M}$ be the inclusion map.
	
	One last important concept in this section and the next will be unique ergodicity. A C*-dynamical system $(\mathfrak{A}, G, \Theta)$ is called \emph{uniquely ergodic} if $\mathcal{S}^G$ is a singleton. As in the commutative setting, unique ergodicity can be equivalently characterized in terms of convergence properties of ergodic averages. To our knowledge, the strongest such characterization of unique ergodicity for singly generated C*-dynamical systems can be found in \cite[Theorem 3.2]{AbadieDykema}, which describes relative unique ergodicity. This characterization was then generalized to characterize relative unique ergodicity in amenable C*-dynamical systems in \cite[Theorem 5.2]{DuvenhageStroeh}; however, in Theorem \ref{Classical unique ergodicity for amenable groups}, we provide a characterization of uniquely ergodic C*-dynamical systems in terms of ergodic averages that is not encompassed by \cite[Theorem 5.2]{DuvenhageStroeh}.
	
	For the duration of this section, we assume that $(\mathfrak{M}, \rho, \mathbb{Z}, \Xi)$ is a W*-dynamical system with $\mathcal{L}^2(\mathfrak{M}, \rho)$ separable, and that $(\mathfrak{A}, \mathbb{Z}, \Theta)$ is a C*-dynamical system such that $\mathfrak{A}$ is separable.
	
	Given a C*-dynamical system $(\mathfrak{A}, \mathbb{Z}, \Theta)$, let $a \in \mathfrak{A}$ be a positive element. We define the \emph{gauge} of $a$ to be
	$$\Gamma(a) : = \lim_{k \to \infty} \frac{1}{k} \left\| \sum_{j = 0}^{k - 1} \Theta_j a \right\| .$$
	To prove this limit exists, it suffices to observe that the sequence $\left( \left\|\sum_{j = 0}^{k - 1} \Theta_j a \right\| \right)_{k = 1}^\infty$ is subadditive, since
	\begin{align*}
		\left\| \sum_{j = 0}^{k + \ell - 1} \Theta_j a \right\|	& \leq \left\| \sum_{j = 0}^{k - 1} \Theta_j a \right\| + \left\| \sum_{j = k}^{k + \ell - 1} \Theta_j a \right\| \\
		& = \left\| \sum_{j = 0}^{k - 1} \Theta_j a \right\| + \left\| \Theta_k \sum_{j = 0}^{\ell - 1} \Theta_j a \right\| \\
		& = \left\| \sum_{j = 0}^{k - 1} \Theta_j a \right\| + \left\| \sum_{j = 0}^{\ell - 1} \Theta_j a \right\| .
	\end{align*}
	Therefore, by the Subadditivity Lemma, the sequence $\left( \frac{1}{k} \left\|\sum_{j = 0}^{k - 1} \Theta_j a \right\| \right)_{k = 1}^\infty$ converges, and we have the equality
	$$\lim_{k \to \infty} \frac{1}{k} \left\| \sum_{j = 0}^{k - 1} \Theta_j a \right\| = \inf_{k \in \mathbb{N}} \frac{1}{k} \left\| \sum_{j = 0}^{k - 1} \Theta_j a \right\| .$$
	
	We have the following characterization of $\Gamma$ in the language of ergodic optimization.
	
	\begin{Thm}\label{Singly generated gauge}
	Let $(\mathfrak{A}, \mathbb{Z}, \Theta)$ be a C*-dynamical system. Then if $a \in \mathfrak{A}$ is a positive element, then $\Gamma(a) = m \left( a \vert \mathcal{S}^G \right)$.
	\end{Thm}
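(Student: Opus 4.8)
The plan is to establish the two inequalities $m\left(a \vert \mathcal{S}^G\right) \le \Gamma(a)$ and $\Gamma(a) \le m\left(a \vert \mathcal{S}^G\right)$ separately. For the first, easier direction, fix $\psi \in \mathcal{S}^G$. Since $\psi$ is $\Theta$-invariant, $\psi(a) = \frac{1}{k}\sum_{j=0}^{k-1}\psi(\Theta_j a) = \psi\!\left(\frac{1}{k}\sum_{j=0}^{k-1}\Theta_j a\right) \le \left\|\frac{1}{k}\sum_{j=0}^{k-1}\Theta_j a\right\|$ for every $k \in \mathbb{N}$. Taking the infimum over $k$, which equals $\Gamma(a)$ by the subadditivity computation carried out just before the statement, gives $\psi(a) \le \Gamma(a)$, and supremizing over $\psi \in \mathcal{S}^G$ yields $m\left(a\vert\mathcal{S}^G\right) \le \Gamma(a)$.

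For the reverse inequality I would use the standard fact that for a positive element $b$ of a unital C*-algebra one has $\|b\| = \sup_{\phi\in\mathcal{S}}\phi(b)$ (restrict to the commutative C*-subalgebra generated by $b$ and $1$, realize it as $C(\sigma(b))$, take the evaluation character at $\|b\| \in \sigma(b)$, and extend it to a state of $\mathfrak{A}$). Each Cesàro average $b_k := \frac{1}{k}\sum_{j=0}^{k-1}\Theta_j a$ is positive, so for each $k$ choose a state $\phi_k \in \mathcal{S}$ with $\phi_k(b_k) > \|b_k\| - \frac{1}{k} \ge \Gamma(a) - \frac{1}{k}$, the last step because $\Gamma(a) = \inf_k \|b_k\|$. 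Now set $\psi_k := \frac{1}{k}\sum_{j=0}^{k-1}\phi_k\circ\Theta_j$, which is again a state on $\mathfrak{A}$, and observe that $\psi_k(a) = \phi_k(b_k) > \Gamma(a) - \frac{1}{k}$. Since $\mathfrak{A}$ is separable, $\mathcal{S}$ is weak*-compact metrizable, so a subsequence $\psi_{k_n}$ converges weak* to some state $\psi$, which satisfies $\psi(a) = \lim_n \psi_{k_n}(a) \ge \Gamma(a)$. To see $\psi \in \mathcal{S}^G$, apply the Krylov–Bogolyubov telescoping estimate already used in the proof of Theorem \ref{K-B}: for each $x \in \mathfrak{A}$, $\left|\psi_k(\Theta_1 x) - \psi_k(x)\right| = \frac{1}{k}\left|\phi_k(\Theta_k x) - \phi_k(x)\right| \le \frac{2\|x\|}{k} \to 0$, so $\psi(\Theta_1 x) = \psi(x)$ for all $x$, and since $1$ generates $\mathbb{Z}$ this gives $\psi = \psi\circ\Theta_g$ for all $g$. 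Hence $m\left(a\vert\mathcal{S}^G\right) \ge \psi(a) \ge \Gamma(a)$, completing the argument. (Alternatively one could produce $\psi$ by applying Theorem \ref{K-B} to a suitable orbit closure, but the direct averaging is shortest.)

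The step I expect to require the most care is the reverse inequality, specifically the combination of choosing near-norming states $\phi_k$ for each $b_k$ and then averaging them over the finite orbit segment $\{0,\dots,k-1\}$ so that the resulting states $\psi_k$ simultaneously keep $\psi_k(a)$ close to $\Gamma(a)$ and become asymptotically $\Theta$-invariant; once that is set up, the weak*-compactness and the telescoping estimate are routine, and everything else is bookkeeping.
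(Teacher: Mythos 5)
Your proposal is correct and follows essentially the same route as the paper: both directions are proved the same way, with the upper bound $\phi(a)\le\inf_k\left\|\operatorname{Avg}_k a\right\|$ for invariant $\phi$, and the lower bound obtained by averaging (near-)norming states of the Ces\`aro means over the orbit segment, passing to a weak*-limit point, and verifying invariance by the telescoping estimate. The only cosmetic difference is that the paper picks states attaining the norm exactly rather than within $\frac{1}{k}$; both work.
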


	\begin{proof}
	For each $k \in \mathbb{N}$, choose a state $\sigma_k$ on $\mathfrak{A}$ such that
	$$\sigma_k \left( \frac{1}{k} \sum_{j = 0}^{k - 1} \Theta_j a \right) = \left\| \frac{1}{k} \sum_{j = 0}^{k - 1} \Theta_j a \right\| .$$
	Let $\omega_k = \frac{1}{k} \sum_{j = 0}^{k - 1} \sigma_k \circ \Theta_j$, so
	\begin{align*}
		\omega_k(x)	& = \frac{1}{k} \sum_{j = 0}^{k - 1} \sigma_k \left( \Theta_j x \right)  \\
		& = \sigma_k \left( \frac{1}{k} \sum_{j = 0}^{k - 1} \Theta_j x \right) , \\
		\omega_k(a)	& = \sigma_k \left( \frac{1}{k} \sum_{j = 0}^{k - 1} \Theta_j a \right) \\
		& = \left\| \frac{1}{k} \sum_{j = 0}^{k - 1} \Theta_j a \right\| .
	\end{align*}
	Let $\omega \in \mathcal{S}$ be a weak*-limit point of $\left( \omega_k : k \in \mathbb{N} \right)$, and let $k_1 < k_2 < \cdots$ be a subsequence such that $\omega_{k_n} \stackrel{n \to \infty}{\to} \omega$ in the weak*-topology. We claim that $\omega$ is $\Theta$-invariant. This follows because if $x \in \mathfrak{A}$, then
	\begin{align*}
		\left| \omega(x - \Theta_1 x) \right|	& = \left| \lim_{n \to \infty} \omega_{k_n} (x - \Theta_1 x) \right| \\
		& = \lim_{n \to \infty} \left| \sigma_{k_n} \left( \left( \frac{1}{k_n}\sum_{j = 0}^{k_n - 1} \Theta_j x \right) - \left( \frac{1}{k_n}\sum_{j = 0}^{k_n - 1} \Theta_j \Theta_1 x \right) \right) \right| \\
		& = \lim_{n \to \infty} \frac{1}{k_n} \left| \sigma_{k_n} \left( x - \Theta_{k_n} x \right) \right| \\
		& \leq \lim_{n \to \infty} \frac{2 \| x\|}{k_n} \\
		& = 0 .
	\end{align*}
	Therefore $\omega(a) = \Gamma(a)$, and $\omega$ is a $\Theta$-invariant state on $\mathfrak{A}$, so $$\omega(a) = \phi(a) \leq m \left( a \vert \mathcal{S}^\mathbb{Z} \right) .$$
	
	Now, we prove the opposite inequality. Let $\phi \in \mathcal{S}^\mathbb{Z}$. Then
	\begin{align*}
		\phi(a)	& = \phi \left( \operatorname{Avg}_k a \right) \\
		& \leq \left\| \operatorname{Avg}_k a \right\| \\
		& = \frac{1}{k} \left\| \sum_{j = 0}^{k - 1} \Theta_j a \right\| \\
		& = \frac{1}{k} \left\| \sum_{j = 0}^{k - 1} \Theta_j a \right\|	& (\forall k \in \mathbb{N}) \\
		\Rightarrow \phi(a)	& \leq \inf_{k \in \mathbb{N}} \frac{1}{k} \left\| \sum_{j = 0}^{k - 1} \Theta_j a \right\| \\
		& = \Gamma(a) \\
		\Rightarrow \sup_{\psi \in \mathcal{S}^\mathbb{Z}} \psi(a)	& \leq \Gamma(a) .
	\end{align*}
	Therefore
	$$m \left( a \vert \mathcal{S}^\mathbb{Z} \right) = \sup_{\psi \in \mathcal{S}^\mathbb{Z}} \psi(a) \leq \Gamma(a) .$$
	
	This establishes the identity.
	\end{proof}
	
	\begin{Cor}\label{Gamma estimate}
		Let $(\mathfrak{M}, \rho, \mathbb{Z}, \Xi)$ be a W*-dynamical system, and let $(\mathfrak{A}, \mathbb{Z}, \Theta; \iota)$ be a C*-model of $(\mathfrak{M}, \rho, \mathbb{Z}, \Xi)$. If $a \in \mathfrak{A}$ is a positive element, then
		$$\Gamma(\iota(a)) = m \left( a \vert \operatorname{Ann}(\ker \iota) \right) .$$
	\end{Cor}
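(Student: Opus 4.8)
The plan is to factor $\iota$ through the quotient by its kernel and then reduce the claim to Theorem \ref{Singly generated gauge} combined with Theorem \ref{Ergodic optimization through *-homomorphisms}. First I would check that $\ker \iota$ is a $\Theta$-invariant closed two-sided ideal of $\mathfrak{A}$: it is an ideal as the kernel of a *-homomorphism, and $\Theta$-invariance follows from the equivariance $\Xi_g \circ \iota = \iota \circ \Theta_g$ together with the fact that each $\Xi_g$ is an automorphism. Hence $\Theta$ descends to a point-continuous action $\tilde{\Theta}$ of $\mathbb{Z}$ on the separable quotient C*-algebra $\mathfrak{A} / \ker \iota$, and the quotient map $\pi : \mathfrak{A} \twoheadrightarrow \mathfrak{A} / \ker \iota$ is a surjective equivariant *-homomorphism. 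Moreover $\iota$ factors as $\iota = \tilde{\iota} \circ \pi$, where $\tilde{\iota} : \mathfrak{A} / \ker \iota \hookrightarrow \mathfrak{M}$ is an injective *-homomorphism, hence isometric; in particular $\| \iota(x) \| = \| \pi(x) \|$ for every $x \in \mathfrak{A}$, and $\tilde{\iota}$ intertwines $\tilde{\Theta}$ with $\Xi$.

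Next I would compute the gauge. Since $\iota \left( \sum_{j = 0}^{k - 1} \Theta_j a \right) = \sum_{j = 0}^{k - 1} \Xi_j \iota(a)$ and $\pi \left( \sum_{j = 0}^{k - 1} \Theta_j a \right) = \sum_{j = 0}^{k - 1} \tilde{\Theta}_j \pi(a)$, the norm identity $\| \iota(\,\cdot\,) \| = \| \pi(\,\cdot\,) \|$ gives
$$\Gamma(\iota(a)) = \lim_{k \to \infty} \frac{1}{k} \left\| \sum_{j = 0}^{k - 1} \Xi_j \iota(a) \right\| = \lim_{k \to \infty} \frac{1}{k} \left\| \sum_{j = 0}^{k - 1} \tilde{\Theta}_j \pi(a) \right\| = \Gamma(\pi(a)),$$
where the right-hand gauge is computed inside the C*-dynamical system $(\mathfrak{A} / \ker \iota, \mathbb{Z}, \tilde{\Theta})$. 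Applying Theorem \ref{Singly generated gauge} to this system (its C*-algebra is separable as a quotient of $\mathfrak{A}$, and $\pi(a)$ is positive) yields $\Gamma(\pi(a)) = m \left( \pi(a) \vert \tilde{\mathcal{S}}^\mathbb{Z} \right)$, where $\tilde{\mathcal{S}}^\mathbb{Z}$ denotes the $\tilde{\Theta}$-invariant states on $\mathfrak{A} / \ker \iota$. Finally, applying Theorem \ref{Ergodic optimization through *-homomorphisms} to the surjective equivariant *-homomorphism $\pi$, and using $\ker \pi = \ker \iota$, gives $m \left( \pi(a) \vert \tilde{\mathcal{S}}^\mathbb{Z} \right) = m \left( a \vert \operatorname{Ann}(\ker \iota) \right)$. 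Chaining the three equalities yields $\Gamma(\iota(a)) = m \left( a \vert \operatorname{Ann}(\ker \iota) \right)$.

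I expect no serious obstacle here; the argument is essentially bookkeeping. The two points meriting a moment's care are: (i) interpreting $\Gamma(\iota(a))$ correctly, namely as the gauge of $\iota(a)$ inside the sub-C*-dynamical system $(\iota(\mathfrak{A}), \mathbb{Z}, \Xi \vert_{\iota(\mathfrak{A})})$ — here one uses that the image of a *-homomorphism of C*-algebras is norm-closed, so that $\iota(\mathfrak{A})$ is a genuine C*-subalgebra of $\mathfrak{M}$ and the subadditivity argument guaranteeing the existence of the defining limit goes through verbatim; and (ii) noting that $\ker \iota$ is a \emph{proper} ideal (because $\iota \neq 0$, since $\iota(\mathfrak{A})$ is weakly dense in $\mathfrak{M}$), so that $\operatorname{Ann}(\ker \iota) \neq \emptyset$ by Proposition \ref{Nonempty annihilators} and the supremum defining $m \left( a \vert \operatorname{Ann}(\ker \iota) \right)$ is taken over a nonempty set. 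One could instead give a self-contained proof mirroring that of Theorem \ref{Singly generated gauge}, choosing the norming states $\sigma_k$ on $\mathfrak{A} / \ker \iota$ and pulling them back along $\pi$, but routing through the two already-established theorems is cleaner.
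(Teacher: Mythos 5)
Your proposal is correct and follows essentially the same route as the paper: both reduce $\Gamma(\iota(a))$ to the gauge of an intermediate singly generated system, apply Theorem \ref{Singly generated gauge} there, and then transfer back via Theorem \ref{Ergodic optimization through *-homomorphisms}. The only (cosmetic) difference is that you work with the quotient $\mathfrak{A}/\ker\iota$ while the paper works directly with the isomorphic image $\iota(\mathfrak{A}) \subseteq \mathfrak{M}$ equipped with the restricted action $\Xi_n\vert_{\iota(\mathfrak{A})}$, applying the *-homomorphism theorem to $\iota$ viewed as a surjection onto its image rather than to the quotient map $\pi$.
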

	
	\begin{proof}
	Write $\tilde{\mathfrak{A}} = \iota(\mathfrak{A}) \subseteq \mathfrak{M}$, and let $\tilde{\Theta} : \mathbb{Z} \to \operatorname{Aut} \left( \tilde{\mathfrak{A}} \right)$ be the action $\tilde{ \Theta }_n = \Xi_n \vert_{ \tilde{\mathfrak{A}} }$ obtained by restricting $\Xi$ to $\tilde{\mathfrak{A}}$. Write $\tilde{\mathcal{S}}^\mathbb{Z}$ for the space of $\tilde{\Theta}$-invariant states on $\tilde{\mathfrak{A}}$.
	
	We can write $\Gamma_{\mathfrak{M}}(\iota(a)) = \Gamma_{\tilde{\mathfrak{A}}}(\iota(a))$. By Theorem \ref{Singly generated gauge}, we know that $\Gamma_{ \tilde{\mathfrak{A}} }(\iota(a)) = m \left( \iota(a) \vert \tilde{\mathcal{S}}^\mathbb{Z} \right)$, and by Theorem \ref{Ergodic optimization through *-homomorphisms}, we know that $m \left( \iota(a) \vert \tilde{\mathcal{S}}^\mathbb{Z} \right) = m \left( a \vert \operatorname{Ann}(\ker \iota) \right)$.
	\end{proof}
	
	\begin{Rmk}
	Corollary \ref{Gamma estimate} can be regarded as an operator-algebraic extension of Lemma 2.3 from \cite{Assani-Young}. The assumption that $(\mathfrak{A}, G, \Theta; \iota)$ is faithful can be understood as analogous to the assumption of strict positivity in that paper.
	\end{Rmk}
	
	This $\Gamma$ functional provides an alternative characterization of unique ergodicity, at least under some additional Choquet-theoretic hypotheses.
	
	\begin{Thm}
		Let $(\mathfrak{M}, \rho, \mathbb{Z}, \Xi)$ be a W*-dynamical system, and let $(\mathfrak{A}, \mathbb{Z}, \Theta; \iota)$ be a faithful C*-model of $(\mathfrak{M}, \rho, \mathbb{Z}, \Xi)$. Then the following conditions are related by the implications (i)$\iff$(ii)$\Rightarrow$(iii).
		\begin{enumerate}[label=(\roman*)]
			\item The C*-dynamical system $(\mathfrak{A}, \mathbb{Z}, \Theta)$ is uniquely ergodic.
			\item The C*-dynamical system $(\mathfrak{A}, \mathbb{Z}, \Theta)$ is strictly ergodic.
			\item $\Gamma(\iota(a)) = \rho(\iota(a))$ for all positive $a \in \mathfrak{A}$.
		\end{enumerate}
		Further, if $\mathcal{S}^\mathbb{Z}$ is a simplex, then (iii)$\Rightarrow$(i).
	\end{Thm}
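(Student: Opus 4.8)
The plan is to use Corollary~\ref{Gamma estimate} to recast condition (iii) as a concrete comparison between invariant states, after which every implication becomes short. First I would set $\phi_\rho := \rho \circ \iota$ and note two facts about it. Since $\Xi_g \circ \iota = \iota \circ \Theta_g$ and $\rho$ is $\Xi$-invariant, $\phi_\rho$ is a $\Theta$-invariant state on $\mathfrak{A}$; and since $\iota$ is injective (hence isometric) and $\rho$ is a \emph{faithful} trace, $\phi_\rho$ is a faithful state on $\mathfrak{A}$ --- if $a \geq 0$ and $a \neq 0$ then $\iota(a) \geq 0$ and $\iota(a) \neq 0$, so $\rho(\iota(a)) > 0$. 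Because $\iota$ is faithful, $\ker \iota = \{0\}$, so $\operatorname{Ann}(\ker \iota) = \operatorname{Ann}(\{0\}) = \mathcal{S}^{\mathbb{Z}}$, and Corollary~\ref{Gamma estimate} becomes $\Gamma(\iota(a)) = m\left( a \vert \mathcal{S}^{\mathbb{Z}} \right)$ for every positive $a \in \mathfrak{A}$. Hence (iii) is equivalent to the statement that $\sup_{\psi \in \mathcal{S}^{\mathbb{Z}}} \psi(a) = \phi_\rho(a)$ for every positive $a \in \mathfrak{A}$.

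With this in hand I would dispose of (i)$\iff$(ii) and (ii)$\Rightarrow$(iii) quickly. The implication (ii)$\Rightarrow$(i) is immediate from the definition of strict ergodicity. For (i)$\Rightarrow$(ii), unique ergodicity forces $\mathcal{S}^{\mathbb{Z}} = \{\phi_\rho\}$ (since $\phi_\rho$ is at any rate \emph{an} invariant state), and $\phi_\rho$ is faithful by the above, so the system is strictly ergodic; should ``strictly ergodic'' instead be read as ``uniquely ergodic and minimal'', the same conclusion follows, since by Proposition~\ref{Nonempty annihilators} any proper $\Theta$-invariant closed ideal $\mathfrak{I} \neq \{0\}$ would carry an invariant state that annihilates it and is therefore distinct from the faithful $\phi_\rho$, contradicting (i). For (ii)$\Rightarrow$(iii): if $\mathcal{S}^{\mathbb{Z}} = \{\phi_\rho\}$ then $m\left( a \vert \mathcal{S}^{\mathbb{Z}} \right) = \phi_\rho(a) = \rho(\iota(a))$, which is (iii) after Corollary~\ref{Gamma estimate}.

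The implication with real content is (iii)$\Rightarrow$(i), for which I would give a direct argument. Fix $\psi \in \mathcal{S}^{\mathbb{Z}}$. By the reformulation of (iii), for every positive $a \in \mathfrak{A}$ we have $\psi(a) \leq \sup_{\psi' \in \mathcal{S}^{\mathbb{Z}}} \psi'(a) = \phi_\rho(a)$, so $\phi_\rho - \psi$ is a positive linear functional on $\mathfrak{A}$; but $(\phi_\rho - \psi)(1) = 1 - 1 = 0$, and a positive functional $\varphi$ on a unital C*-algebra with $\varphi(1) = 0$ has $\|\varphi\| = \varphi(1) = 0$, hence $\varphi = 0$. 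Therefore $\psi = \phi_\rho$, so $\mathcal{S}^{\mathbb{Z}} = \{\phi_\rho\}$ and the system is uniquely ergodic. I would remark that this argument uses no Choquet-theoretic hypothesis at all; the simplex assumption on $\mathcal{S}^{\mathbb{Z}}$ enters only if one insists on staying inside the machinery of this section, in which case one argues instead that a simplex $\mathcal{S}^{\mathbb{Z}}$ with more than one point has (by Krein--Milman) an extreme point $\phi_0 \neq \phi_\rho$, that Corollary~\ref{Uniquely maximizing states} then yields a self-adjoint $x$ --- which may be replaced by the positive element $x + \|x\| \cdot 1$ without changing which states maximize it --- such that $\phi_0$ is the unique $(x \vert \mathcal{S}^{\mathbb{Z}})$-maximizing state, and that (iii) forces $\phi_\rho(x) = m\left( x \vert \mathcal{S}^{\mathbb{Z}} \right)$, making $\phi_\rho$ maximizing too, a contradiction.

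The only delicate point is the bookkeeping in the first step: one must check that $\phi_\rho$ genuinely belongs to $\mathcal{S}^{\mathbb{Z}}$ and that $\operatorname{Ann}(\ker \iota) = \mathcal{S}^{\mathbb{Z}}$, because it is precisely these identifications that let Corollary~\ref{Gamma estimate} turn (iii) into the assertion that the distinguished invariant state $\phi_\rho$ dominates every invariant state on positive elements. Once that translation is in place there is no substantial obstacle; the remaining choice is stylistic, between the elementary positive-functional proof of (iii)$\Rightarrow$(i) and the Choquet-flavoured one that actually invokes the stated simplex hypothesis.
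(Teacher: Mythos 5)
Your proposal is correct, and for the one implication with real content it takes a genuinely different and in fact stronger route than the paper. For (i)$\iff$(ii) and (i)$\Rightarrow$(iii) you do essentially what the paper does: observe that $\rho \circ \iota$ is a faithful invariant state, and use faithfulness of $\iota$ together with Corollary \ref{Gamma estimate} (via $\operatorname{Ann}(\ker \iota) = \operatorname{Ann}(\{0\}) = \mathcal{S}^{\mathbb{Z}}$) to rewrite (iii) as $m\left( a \vert \mathcal{S}^{\mathbb{Z}} \right) = (\rho \circ \iota)(a)$ for all positive $a$. The divergence is in (iii)$\Rightarrow$(i): the paper argues contrapositively through the Choquet machinery --- Krein--Milman produces an extreme invariant state $\phi \neq \rho \circ \iota$, Corollary \ref{Uniquely maximizing states} (which needs $\mathcal{S}^{\mathbb{Z}}$ to be a simplex) produces a self-adjoint $a$ with $\{\phi\} = \mathcal{S}^{\mathbb{Z}}_{\mathrm{max}}(a)$, and after translating by a constant this violates (iii). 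Your primary argument instead notes that (iii) makes $\rho \circ \iota - \psi$ a positive linear functional vanishing at $1$ for every $\psi \in \mathcal{S}^{\mathbb{Z}}$, hence zero (by $\|\varphi\| = \varphi(1)$, or Cauchy--Schwarz); this is elementary, correct, and uses no simplex hypothesis whatsoever, so it actually proves (iii)$\Rightarrow$(i) unconditionally --- a genuine strengthening of the theorem as stated. What the paper's approach buys is consistency with the ergodic-optimization framework the section is built around (and it is the argument that survives when one replaces $\mathcal{S}^{\mathbb{Z}}$ by a smaller compact simplex $K$ where the supremum defining the gauge need not dominate a distinguished state so transparently); what yours buys is the removal of a hypothesis. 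Your secondary, Choquet-flavoured argument reproduces the paper's proof essentially verbatim, and your hedge on the meaning of ``strictly ergodic'' is handled correctly under either reading.
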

	
	\begin{proof}
		(i)$\Rightarrow$(ii) Suppose that $(\mathfrak{A}, \mathbb{Z}, \Theta)$ is uniquely ergodic. Then $\rho \circ \iota$ is an invariant state on $\mathfrak{A}$, so it follows that $\rho \circ \iota$ is the unique invariant state on $\mathfrak{A}$. But $\rho \circ \iota$ is also a faithful state on $\mathfrak{A}$, so it follows that $(\mathfrak{A}, \mathbb{Z}, \Theta)$ is strictly ergodic.
		
		(ii)$\Rightarrow$(i) Trivial.
		
		(i)$\Rightarrow$(iii) Suppose that $(\mathfrak{A}, \mathbb{Z}, \Theta)$ is uniquely ergodic, and let $a \in \mathfrak{A}$ be positive. Let $\phi$ be a $\mathcal{S}^\mathbb{Z}$-maximizing state for $a$. Then $\phi = \rho \circ \iota$, since both $\phi$ and $\rho \circ \iota$ are invariant states on $\mathfrak{A}$, and $(\mathfrak{A}, \mathbb{Z}, \Theta)$ is uniquely ergodic. Thus $\phi = \rho \circ \iota$, so $\Gamma(\iota(a)) = \phi(a) = \rho(\iota(a))$.
		
		(iii)$\Rightarrow$(i) Suppose that $\mathcal{S}^\mathbb{Z}$ is a simplex, but that $(\mathfrak{A}, \mathbb{Z}, \Theta)$ is \emph{not} uniquely ergodic. By the Krein-Milman Theorem, there exists an extreme point $\phi \in \mathcal{S}^\mathbb{Z}$ of $\mathcal{S}^\mathbb{Z}$ different from $\rho \circ \iota$. Then by Corollary \ref{Uniquely maximizing states}, there exists $a \in \mathfrak{A}$ self-adjoint such that $\{ \phi \} = \mathcal{S}_\mathrm{max}^\mathbb{Z}(a)$. We can assume that $a$ is positive, since otherwise we could replace $a$ with $a + r$ for a sufficiently large positive real number $r > 0$, and $\mathcal{S}_\mathrm{max}^\mathbb{Z}(a) = \mathcal{S}_\mathrm{max}^\mathbb{Z}(a + r)$. Then $\Gamma(\iota(a)) = \phi(a)$. But by the assumption that $\phi$ is uniquely $\left( a \vert \mathcal{S}^\mathbb{Z} \right)$-maximizing, it follows that $\rho(\iota(a)) < \phi(a)$. Therefore $\Gamma(\iota(a)) \neq \rho(\iota(a))$, meaning that (iii) does not attain. Thus $\neg$(i)$\Rightarrow \neg$(iii).
	\end{proof}
	
	\section{Unique ergodicity and gauge: the amenable setting}\label{Amenable UE}
	
	For the duration of this section, we assume that $(\mathfrak{M}, \rho, G, \Xi)$ is a W*-dynamical system with $\mathcal{L}^2(\mathfrak{M}, \rho)$ separable. Assume further that $(\mathfrak{A}, G, \Theta)$ is a C*-dynamical system such that $\mathfrak{A}$ is separable, and that $G$ is amenable. It follows from Corollary \ref{Tracial K-B} that $\mathcal{S}^G \neq \emptyset$.
	
	In this section, we expand upon some of the ideas presented in Section \ref{Singly generated UE}, generalizing from the case of actions of $\mathbb{Z}$ to actions of a countable discrete amenable group $G$. We separate these two sections because our treatment of the more general amenable setting has some additional nuances to it.
	
	For an arbitrary nonempty finite subset $F$ of $G$, set
	$$\operatorname{Avg}_F x : = \frac{1}{|F|} \sum_{g \in F} \Theta_g x .$$

	Our first result of this section is a generalization of a classical result from ergodic theory regarding unique ergodicity, which is that a (singly generated) topological dynamical system is uniquely ergodic if and only if the averages of the continuous functions converge to a constant. This classical result is well-known, and can be found in many standard texts on ergodic theory, e.g. \cite[Thm 6.2.1]{DajaniDirksin}, \cite[Thm 10.6]{EisnerOperators}, \cite[Thm 5.17]{Walters}, but the earliest example of a result like this that we could find was \cite[5.3]{OxtobyErgodic}. Theorem \ref{Unique ergodicity equivalent statements} generalizes this classical result not only to the noncommutative setting, but to the setting where the phase group $G$ is amenable.
	
	We define the \emph{weak topology} on a C*-algebra $\mathfrak{A}$ to be the topology generated by the states on $\mathfrak{A}$, i.e.
	\begin{align*}
		x	& \mapsto \psi(x)	& (\psi \in \mathcal{S}) .
	\end{align*}
	In other words, the weak topology is the topology in which a net $(x_i)_i$ converges to $x$ if and only if $(\psi(x_i) )_i$ converges to $\psi(x)$ for every state $\psi$ on $\mathfrak{A}$. We say the net $(x_i)_i$ \emph{converges weakly} to $x$ if it converges in the weak topology.
	
	\begin{Thm}\label{Unique ergodicity equivalent statements}
		Let $(\mathfrak{A}, G, \Theta)$ be a C*-dynamical system. Then the following conditions are equivalent.
		\begin{enumerate}[label=(\roman*)]
			\item $(\mathfrak{A}, G, \Theta)$ is uniquely ergodic.
			\item There exists a left Følner sequences $(F_k)_{k = 1}^\infty$ for $G$ and a linear functional $\phi : \mathfrak{A} \to \mathbb{C}$ such that for all $x \in \mathfrak{A}$, the sequence $\left( \operatorname{Avg}_{F_k} x \right)_{k = 1}^\infty$ converges in norm to $\phi(x) 1 \in \mathbb{C} 1$.
			\item There exists a left Følner sequences $(F_k)_{k = 1}^\infty$ for $G$ and a linear functional $\phi : \mathfrak{A} \to \mathbb{C}$ such that for all $x \in \mathfrak{A}$, the sequence $\left( \operatorname{Avg}_{F_k} x \right)_{k = 1}^\infty$ converges weakly to $\phi(x) 1 \in \mathbb{C} 1$.
			\item There exists a state $\phi$ on $\mathfrak{A}$ such that for every right Følner sequence $(F_k)_{k = 1}^\infty$ for $G$, the sequence $\left( \operatorname{Avg}_{F_k} x \right)_{k = 1}^\infty$ converges in norm to $\phi(x) 1 \in \mathbb{C} 1$.
			\item There exists a state $\phi$ on $\mathfrak{A}$ such that for every right Følner sequence $(F_k)_{k = 1}^\infty$ for $G$, the sequence $\left( \operatorname{Avg}_{F_k} x \right)_{k = 1}^\infty$ converges weakly to $\phi(x) 1 \in \mathbb{C} 1$.
		\end{enumerate}
	\end{Thm}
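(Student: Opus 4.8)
The plan is to prove the equivalences through the cycle
\[
\text{(i)} \Rightarrow \text{(iv)} \Rightarrow \text{(ii)} \Rightarrow \text{(iii)} \Rightarrow \text{(i)}
\]
together with (iv)$\Rightarrow$(v)$\Rightarrow$(i). Among these, (iv)$\Rightarrow$(v) and (ii)$\Rightarrow$(iii) are immediate, since norm convergence implies weak convergence. The implication (iv)$\Rightarrow$(ii) rests on the standard fact that a countable amenable group admits a Følner sequence $(E_k)$ that is simultaneously a left and a right Følner sequence: applying (iv) to $(E_k)$ yields norm convergence $\operatorname{Avg}_{E_k} x \to \phi(x)1$, and $(E_k)$ is in particular a left Følner sequence, so (ii) holds (with the same $\phi$, which is a fortiori a linear functional). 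This reduces the theorem to (iii)$\Rightarrow$(i), (v)$\Rightarrow$(i), and the one substantive implication (i)$\Rightarrow$(iv).

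For (iii)$\Rightarrow$(i) and (v)$\Rightarrow$(i) I would use a single argument. Since $G$ is countable amenable, $\mathcal{S}^G \neq \emptyset$ by Theorem \ref{K-B}; fix $\psi \in \mathcal{S}^G$. Invariance of $\psi$ gives $\psi(\operatorname{Avg}_{F_k} x) = \frac{1}{|F_k|}\sum_{g \in F_k} \psi(\Theta_g x) = \psi(x)$ for every finite $F_k$ and every $x \in \mathfrak{A}$, while by definition of the weak topology each state is weakly continuous, so the hypothesized weak convergence $\operatorname{Avg}_{F_k} x \to \phi(x)1$ forces $\psi(\operatorname{Avg}_{F_k} x) \to \phi(x)$. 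Hence $\psi(x) = \phi(x)$ for all $x$, i.e.\ $\psi = \phi$; as $\psi \in \mathcal{S}^G$ was arbitrary, $\mathcal{S}^G = \{\phi\}$ and the system is uniquely ergodic. (For (v) one applies this to a single right Følner sequence, which exists because $G$ is countable amenable; note that the Følner property of the sequence plays no role in this direction.)

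The heart of the matter is (i)$\Rightarrow$(iv), the operator-algebraic form of the classical principle that unique ergodicity forces \emph{uniform} convergence of ergodic averages. Let $\phi$ be the unique invariant state; it suffices to treat self-adjoint $x$ (a general element splits into real and imaginary parts, each remaining self-adjoint under every $\operatorname{Avg}_F$, and $\phi$ is real on self-adjoint elements). Arguing by contradiction, suppose $\| \operatorname{Avg}_{F_{k_n}} x - \phi(x)1 \| \geq \delta$ along a subsequence of a right Følner sequence $(F_k)$, for some $\delta > 0$. Since $\|y\| = \sup_{\sigma \in \mathcal{S}} |\sigma(y)|$ for self-adjoint $y$, choose states $\sigma_n$ with
\[
\bigl| \sigma_n(\operatorname{Avg}_{F_{k_n}} x) - \phi(x) \bigr| \;=\; \bigl| \sigma_n\bigl( \operatorname{Avg}_{F_{k_n}} x - \phi(x)1 \bigr) \bigr| \;\geq\; \frac{\delta}{2}.
\]
Set $\omega_n := \sigma_n \circ \operatorname{Avg}_{F_{k_n}} = \frac{1}{|F_{k_n}|}\sum_{g \in F_{k_n}} \sigma_n \circ \Theta_g$; each $\operatorname{Avg}_F$ is unital and completely positive, so $\omega_n$ is again a state, and since $\mathcal{S}$ is compact metrizable we may pass to a weak*-convergent subsequence $\omega_n \to \omega \in \mathcal{S}$. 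The invariance computation from the proof of Theorem \ref{K-B}, run with $\sigma_n$ in place of $\psi$ and the Følner property of $(F_{k_n})$, shows $\omega \in \mathcal{S}^G$, so by unique ergodicity $\omega = \phi$. But then $\sigma_n(\operatorname{Avg}_{F_{k_n}} x) = \omega_n(x) \to \omega(x) = \phi(x)$, contradicting the choice of the $\sigma_n$. (Depending on the convention used for composing the action, the roles of "left" and "right" Følner sequence in this argument may need to be interchanged; in that case (i)$\Rightarrow$(ii) becomes the implication obtained directly, and (i)$\Rightarrow$(iv) the one deduced from it via a two-sided Følner sequence.)

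The main obstacle, as expected, is this last implication, and two points there require care. First is the reduction to self-adjoint elements, which is what makes the description $\|y\| = \sup_\sigma |\sigma(y)|$ available and hence lets one manufacture the witnessing states $\sigma_n$ out of a failure of norm convergence. Second is the observation that the auxiliary functionals $\omega_n = \sigma_n \circ \operatorname{Avg}_{F_{k_n}}$ are genuine states, not merely bounded functionals, so that a weak*-cluster point $\omega$ again lies in $\mathcal{S}$ and can be identified with the unique invariant state. The remaining ingredients — the trivial implications, the passage through a two-sided Følner sequence, and the "invariant states are fixed by the averages" argument — are routine.
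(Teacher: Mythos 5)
Your proof is correct, and its load-bearing implication (i)$\Rightarrow$(iv) --- reduce to self-adjoint $x$, extract witnessing states $\sigma_n$ from a failure of norm convergence, average them into $\omega_n = \sigma_n \circ \operatorname{Avg}_{F_{k_n}}$, and pass to a weak*-cluster point that is invariant by the right Følner property yet disagrees with $\phi$ at $x$ --- is exactly the paper's argument, as is the use of a two-sided Følner sequence to pass between the left and right versions. Where you genuinely diverge is in the ``convergence implies unique ergodicity'' direction. The paper proves (iii)$\Rightarrow$(i) by directly verifying that the limit functional $\phi$ is a $\Theta$-invariant state: an $\epsilon/3$ estimate using the left Følner property of $(F_k)$ establishes invariance, positivity and unitality are checked by hand, and only then does uniqueness follow from $\psi(x) = \psi(\operatorname{Avg}_{F_k} x) \to \phi(x)$. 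You instead invoke Theorem \ref{K-B} to produce some $\psi \in \mathcal{S}^G$ and observe that $\psi(\operatorname{Avg}_{F_k} x) = \psi(x)$ for \emph{every} finite averaging set, so the hypothesized weak convergence forces $\psi = \phi$; since $\psi$ was an arbitrary invariant state and at least one exists, $\mathcal{S}^G = \{\phi\}$. This is shorter, it shows a posteriori that $\phi$ is a state without any direct verification, and --- as you note --- it makes no use of the Følner property of the averaging sets in that direction; the cost is that it leans on the Krylov--Bogolyubov theorem (hence on amenability of $G$) where the paper's version of this step is self-contained. It also lets you prove (v)$\Rightarrow$(i) directly rather than routing through (v)$\Rightarrow$(iii) as the paper does. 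Both implication graphs cover all five conditions, so the argument is complete.
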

	
	\begin{proof}
		Assume throughout that any $x \in \mathfrak{A}$ is nonzero.	
		
		(ii)$\Rightarrow$(iii) Obvious.
		
		(iv)$\Rightarrow$(v) Obvious.
		
		(iv)$\Rightarrow$(ii) Follows because a two-sided Følner sequence exists.
		
		(v)$\Rightarrow$(iii) Follows because a two-sided Følner sequence exists.
		
		(iii)$\Rightarrow$(i) Suppose that $\operatorname{Avg}_{F_k} x \to \phi(x) 1 \in \mathbb{C} 1$ weakly for all $x \in \mathfrak{A}$. We claim that $\phi$ is the unique invariant state of $(\mathfrak{A}, G, \Theta)$. First, we demonstrate that $\phi$ is $\Theta$-invariant. Fix $g_0 \in G$, and fix $\epsilon > 0$. Choose $K_1, K_2, K_3 \in \mathbb{N}$ such that
		\begin{align*}
			k	& \geq K_1	& \Rightarrow \left| \phi(\phi(x) 1) - \phi \left( \operatorname{Avg}_{F_k} x \right) \right|	& < \frac{\epsilon}{3} , \\
			k	& \geq K_2	& \Rightarrow \left| \phi(\Theta_{g_0} \phi(x) 1) - \phi (\Theta_{g_0} \operatorname{Avg}_{F_k} x ) \right|	& < \frac{\epsilon}{3} , \\
			k	& \geq K_3	& \Rightarrow \frac{|g_0 F_k \Delta F_k|}{|F_k|}	& < \frac{\epsilon}{3 \| x \|} .
		\end{align*}
		The $K_1, K_2$ exist because we know that in the weak topology, the functionals $\phi , \phi \circ \Theta_{g_0}$ are both continuous, and $K_3$ exists by the amenability of $G$. Let $K = \max \{K_1, K_2, K_3\}$. Then if $k \geq K$, then
		\begin{align*}
			\left| \phi( \Theta_{g_0} x) - \phi(x) \right|	& \leq \left| \phi(\Theta_{g_0} x) - \phi(\Theta_{g_0} \operatorname{Avg}_{F_k} x) \right| + \left| \phi(\Theta_{g_0} \operatorname{Avg}_{F_k} x) - \phi(\operatorname{Avg}_{F_k} x) \right| + \left| \phi(\operatorname{Avg}_{F_k} x) - \phi(x) \right| \\
			& \leq \frac{\epsilon}{3} + \left| \phi(\Theta_{g_0} \operatorname{Avg}_{F_k} x) - \phi(\operatorname{Avg}_{F_k} x) \right| + \frac{\epsilon}{3} \\
			& = \frac{2 \epsilon}{3} + \left| \phi(\Theta_{g_0} \operatorname{Avg}_{F_k} x) - \phi(\operatorname{Avg}_{F_k} x) \right| \\
			& = \frac{2 \epsilon}{3} + \left| \phi \left( \frac{1}{|F_k|}\left( \sum_{g \in F_k} \Theta_{g_0 g} x \right) - \left( \frac{1}{|F_k|} \sum_{g \in F_k} \Theta_g x \right) \right) \right| \\
			& = \frac{2 \epsilon}{3} + \left| \phi \left( \frac{1}{|F_k|}\left( \sum_{g \in g_0 F_k} \Theta_{g} x \right) - \left( \frac{1}{|F_k|} \sum_{g \in F_k} \Theta_g x \right) \right) \right| \\
			& = \frac{2 \epsilon}{3} + \left| \phi \left( \frac{1}{|F_k|}\left( \sum_{g \in g_0 F_k \setminus F_k} \Theta_{g} x \right) - \left( \frac{1}{|F_k|} \sum_{g \in F_k \setminus g_0 F_k} \Theta_g x \right) \right) \right| \\
			& \leq \frac{2 \epsilon}{3} + \left| \phi \left( \frac{1}{|F_k|}\sum_{g \in g_0 F_k \setminus F_k} \Theta_{g} x \right) \right| + \left| \phi \left( \frac{1}{|F_k|} \sum_{g \in F_k \setminus g_0 F_k} \Theta_g x \right) \right| \\
			& < \frac{2 \epsilon}{3} + \frac{|g_0 F_k \Delta F_k|}{|F_k|} \| x \| \\
			& = \epsilon .
		\end{align*}
		Therefore $\phi$ is $\Theta$-invariant. To see that it is positive, it suffices to observe that $x \geq 0 \Rightarrow \operatorname{Avg}_{F_k}$, meaning that $\phi(x) = \lim_{k \to \infty} \phi(\operatorname{Avg}_{F_k} x) \geq 0$. To see that $\phi(1) = 1$, we just observe that $\operatorname{Avg}_{F_k} 1 = 1$ for all $k \in \mathbb{N}$.
		
		Now we show that $\phi$ is the \emph{unique} $\Theta$-invariant state. Let $\psi$ be any invariant state. Then
		\begin{align*}
			\psi(x)	& = \psi(\operatorname{Avg}_{F_k} x) \\
			& \stackrel{k \to \infty}{\to} \psi(\phi(x) 1) \\
			& = \phi(x) \psi(1) \\
			& = \phi(x) .
		\end{align*}
		Therefore $\psi = \phi$, and so $(\mathfrak{A}, G, \Theta)$ is uniquely ergodic.
		
		(i)$\Rightarrow$(iv) Fix a right Følner sequence $(F_k)_{k = 1}^\infty$, and assume for contradiction that $(\mathfrak{A}, G, \Theta)$ is uniquely ergodic with $\Theta$-invariant state $\phi$, but that there exists $x \in \mathfrak{A}$ such that $\left( \operatorname{Avg}_{F_k} x \right)_{k = 1}^\infty$ does \emph{not} converge in norm to a scalar, and in particular does not converge in norm to $\phi(x) 1$. Since we can decompose $x$ into its real and imaginary parts, we can assume that $x \in \mathfrak{A}_\mathrm{sa}$. Fix $\epsilon_0 > 0$ for which there exists an infinite sequence $k_1 < k_2 < \cdots$ such that $\left\| \operatorname{Avg}_{F_{k_n}} x - \phi(x) 1 \right\| \geq \epsilon_0$. Then for each $n \in \mathbb{N}$ exists a state $\psi_n$ on $\mathfrak{A}$ such that $\left| \psi_n \left( \operatorname{Avg}_{F_{k_n}} x - \phi(x) 1 \right) \right| = \left\| \operatorname{Avg}_{F_{k_n}} x - \phi(x) 1 \right\|$.
		
		Set
		$$\omega_n = \psi_n \circ \operatorname{Avg}_{F_{k_n}} ,$$
		so $\omega_n (x - \phi(x) 1) = \psi_n \left( \operatorname{Avg}_{F_{k_n}} x - \phi(x) 1 \right)$. Then $(\omega_n)_{n = 1}^\infty$ has a subsequence, call it $(\omega_{n_j})_{j = 1}^\infty$ which converges in the weak*-topology to some $\omega$. This $\omega$ is also a state on $\mathfrak{A}$, and we claim it is $\Theta$-invariant. Fix $g_0 \in G, y \in \mathfrak{A}$. Then
		\begin{align*}
			\left| \omega(\Theta_{g_0} y) - \omega(y) \right|	& = \lim_{j \to \infty} \left| \omega_{n_j}(\Theta_{g_0} y) - \omega_{n_j}(y) \right| \\
			& = \lim_{j \to \infty} \left| \psi_{n_j} \left( \Theta_{g_0} \operatorname{Avg}_{F_{k_{n_j}}} y \right) - \psi_{n_j} \left( \operatorname{Avg}_{F_{k_{n_j}}} y \right) \right| \\
			& = \lim_{j \to \infty} \frac{1}{|F_{k_{n_j}}|}\left| \psi \left( \left( \sum_{g \in F_{k_{n_j}} g_0} \Theta_g y \right) - \left( \sum_{g \in F_{k_{n_j}}} \Theta_g y \right) \right) \right| \\
			& \leq \limsup_{j \to \infty} \frac{ \left|F_{k_{n_j}} g_0 \Delta F_{k_{n_j}} \right|}{|F_{k_{n_j}}|} \| y \| \\
			& = 0 .
		\end{align*}
		Therefore $\omega$ is $\Theta$-invariant. But $\omega \neq \phi$, since
		\begin{align*}
			\left| \omega(x) - \phi(x) \right|	& = \lim_{j \to \infty} \left| \omega_{n_j}(x) - \phi(x) \right| \\
			& = \lim_{j \to \infty} \left| \omega_{n_j}(x - \phi(x) 1) \right| \\
			& = \lim_{j \to \infty} \left| \psi_{n_j}(\operatorname{Avg}_{F_{k_{n_j}}} x - \phi(x) 1) \right| \\
			& = \lim_{j \to \infty} \left\| \operatorname{Avg}_{F_{k_{n_j}}} x - \phi(x) 1 \right\| \\
			& \geq \epsilon_0 .
		\end{align*}
		This contradicts $(\mathfrak{A}, G, \Theta)$ being uniquely ergodic.
	\end{proof}
	
	\begin{Rmk}
		Although \cite[Theorem 5.2]{DuvenhageStroeh} describes conditions under which unique ergodicity of an action of an amenable group on a C*-algebra can be related to the convergence of ergodic averages, that result is not a direct generalization of our Theorem \ref{Unique ergodicity equivalent statements}.
	\end{Rmk}
	
	To the best of our knowledge, the following commutative analog of Theorem \ref{Unique ergodicity equivalent statements} has not been stated in the literature, so we treat it here as a corollary to Theorem \ref{Unique ergodicity equivalent statements}.
	
	\begin{Thm}\label{Classical unique ergodicity for amenable groups}
		Let $(X, G, U)$ be a topological dynamical system, where $G$ is amenable. For any nonempty finite $F \subseteq G, f \in C(X)$, let us abuse notation and write $\operatorname{Avg}_F f = \frac{1}{|F|} \sum_{g \in F} f \circ U_g$. Then the following conditions are equivalent.
		\begin{enumerate}[label=(\roman*)]
			\item $(X, G, U)$ is uniquely ergodic.
			\item There exists a left Følner sequences $(F_k)_{k = 1}^\infty$ for $G$ and a Borel probability measure $\mu$ on $X$ such that for all $f \in C(X)$, the sequence $\left( \operatorname{Avg}_{F_k} f \right)_{k = 1}^\infty$ converges uniformly to $\int f \mathrm{d} \mu$.
			\item There exists a left Følner sequences $(F_k)_{k = 1}^\infty$ for $G$ and a Borel probability measure $\mu$ on $X$ such that for all $f \in C(X)$, the sequence $\left( \operatorname{Avg}_{F_k} f \right)_{k = 1}^\infty$ converges pointwise to $\int f \mathrm{d} \mu$.
			\item There exists a Borel probability measure $\mu$ on $X$ such that for every right Følner sequence $(F_k)_{k = 1}^\infty$ for $G$, and for every $f \in C(X)$, the sequence $\left( \operatorname{Avg}_{F_k} f \right)_{k = 1}^\infty$ converges uniformly to $\int f \mathrm{d} \mu$.
			\item There exists a Borel probability measure $\mu$ on $X$ such that for every right Følner sequence $(F_k)_{k = 1}^\infty$ for $G$, and for every $f \in C(X)$, the sequence $\left( \operatorname{Avg}_{F_k} f \right)_{k = 1}^\infty$ converges pointwise to $\int f \mathrm{d} \mu$.
		\end{enumerate}
	\end{Thm}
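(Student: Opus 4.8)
The plan is to deduce Theorem \ref{Classical unique ergodicity for amenable groups} from Theorem \ref{Unique ergodicity equivalent statements} by passing through the Gelfand--Naimark correspondence. Write $\mathfrak{A} = C(X)$ (with $X$ compact metrizable) and $\Theta_g f = f \circ U_g$, so that $(C(X), G, \Theta)$ is a C*-dynamical system in which the operator $\operatorname{Avg}_F$ of Section \ref{Amenable UE} is literally the averaging operator $\operatorname{Avg}_F f = \frac{1}{|F|}\sum_{g \in F} f \circ U_g$ of the present statement. By the Riesz representation theorem, states on $C(X)$ correspond bijectively to Borel probability measures on $X$ via $\phi \leftrightarrow \mu$ with $\phi(f) = \int f \,\mathrm{d}\mu$, and this correspondence sends $\Theta$-invariant states to $U$-invariant measures; hence $(X, G, U)$ is uniquely ergodic in the classical sense if and only if $(C(X), G, \Theta)$ is uniquely ergodic in the sense of Section \ref{Singly generated UE}. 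Under this dictionary, a constant function $c\cdot 1 \in \mathbb{C}1 \subseteq C(X)$ is exactly what the present statement writes as $\int f\,\mathrm{d}\mu$ when $c = \phi(f)$.

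The next step is to match the modes of convergence appearing in the two theorems. Norm convergence in $C(X)$ is uniform convergence of functions, so condition (ii) (resp. (iv)) here is condition (ii) (resp. (iv)) of Theorem \ref{Unique ergodicity equivalent statements} verbatim, once one observes that the linear functional $\phi$ produced there is automatically a state: $\|\operatorname{Avg}_{F_k} f\| \le \|f\|$ for all $k$ forces $|\phi(f)| \le \|f\|$, while $\operatorname{Avg}_{F_k} 1 = 1$ forces $\phi(1) = 1$, and a norm limit of nonnegative functions is nonnegative, so $\phi \ge 0$ on positives; hence $\phi$ is a Borel probability measure. For the weak conditions (iii) and (v), I would argue that, restricted to the uniformly bounded sequences $\left(\operatorname{Avg}_{F_k} f\right)_{k=1}^\infty$ that actually occur, the weak topology of Section \ref{Amenable UE} and pointwise convergence of functions define the same notion of convergence: each point evaluation $\delta_x$ is a state, so weak convergence implies pointwise convergence; conversely, since $\|\operatorname{Avg}_{F_k} f\| \le \|f\|$, if $\operatorname{Avg}_{F_k} f \to g$ pointwise then dominated convergence gives $\psi(\operatorname{Avg}_{F_k} f) \to \psi(g)$ for every Borel probability measure $\psi$, i.e. for every state on $C(X)$, which is weak convergence. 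Thus ``converges weakly to $\phi(f)1$'' and ``converges pointwise to $\int f\,\mathrm{d}\mu$'' are the same assertion for these sequences, and again the limiting functional is a probability measure by positivity and unitality of the averages.

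With this dictionary in place, the proof is immediate: each of the five conditions of the present theorem is, under the above translation, equivalent to the correspondingly numbered condition of Theorem \ref{Unique ergodicity equivalent statements} for $(C(X), G, \Theta)$, so the equivalences follow at once. The only point that is not a pure tautology---the main (if mild) obstacle---is the identification of the Section \ref{Amenable UE} weak topology with pointwise convergence: a priori that topology is generated by \emph{all} of $\mathcal{S}$, which is vastly larger than the set of point evaluations, so one genuinely needs the uniform boundedness of ergodic averages together with the dominated convergence theorem to close the loop. A secondary piece of bookkeeping, handled as indicated above, is checking that the a priori merely linear functional (resp. state) furnished by Theorem \ref{Unique ergodicity equivalent statements} in conditions (ii)--(v) is in each case a genuine Borel probability measure.
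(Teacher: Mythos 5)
Your proposal is correct and matches the paper's own proof essentially verbatim: the paper likewise treats the theorem as the abelian specialization of Theorem \ref{Unique ergodicity equivalent statements}, identifying norm convergence with uniform convergence and establishing the equivalence of weak and pointwise convergence via the evaluation states $\operatorname{ev}_y$ in one direction and the Dominated Convergence Theorem (using the uniform bound $\|\operatorname{Avg}_{F_k} f\| \leq \|f\|$) in the other. Your additional check that the limiting linear functional is a genuine state, hence a Borel probability measure, is a small piece of bookkeeping the paper leaves implicit.
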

	
	\begin{proof}
		This is essentially a corollary of Theorem \ref{Unique ergodicity equivalent statements}, since it is just the classical translation of that theorem in the case where $\mathfrak{A}$ is abelian. The only nontrivial part of translating that theorem to this one is to see that weak convergence of $\left(\operatorname{Avg}_{F_k} f\right)_{k = 1}^\infty$ to $\left( \int f \mathrm{d} \mu \right) 1$ is equivalent to the pointwise convergence of $\left( \operatorname{Avg}_{F_k} f \right)_{k = 1}^\infty$ to the constant function $\int f \mathrm{d} \mu$.
		
		First, suppose that $\left( \operatorname{Avg}_{F_k} f \right)_{k = 1}^\infty$ converges weakly to $\int f \mathrm{d} \mu$. Consider the evaluation states $\operatorname{ev}_y : f \mapsto f(y)$, where $y \in X$. Then $\operatorname{ev}_y \left(\operatorname{Avg}_{F_k} f \right) = \left( \operatorname{Avg}_{F_k} f \right) (y) \to \operatorname{ev}_ y \left(\int f \mathrm{d} \mu \right) = \int f \mathrm{d} \mu$. Therefore, the sequence converges pointwise to $\int f \mathrm{d} \mu$.
		
		Now, suppose that $\left( \operatorname{Avg}_{F_k} f \right)_{k = 1}^\infty$ converges pointwise to $\int f \mathrm{d} \mu$. Any state $\phi$ on $C(X)$ can be realized as $\phi(f) = \int f \mathrm{d} \nu$ for some Borel probability measure $\nu$ on $X$. The sequence $(\operatorname{Avg}_{F_k} f)_{k = 1}^\infty$ is uniformly bounded in magnitude by $\| f \|$, which is integrable with respect to any Borel probability measure, and so we can apply the Dominated Convergence Theorem to conclude that
		$$\int \operatorname{Avg}_{F_k} f \mathrm{d} \nu \to \int \left( \int f \mathrm{d} \mu \right) \mathrm{d} \nu = \int f \mathrm{d} \mu .$$
		In other words, $\phi(\operatorname{Avg}_{F_k} f) \to \int f \mathrm{d} \mu = \phi \left( \int f \mathrm{d} \mu \right)$ for any state $\phi$ on $C(X)$, so $\operatorname{Avg}_{F_k} f \to \int f \mathrm{d} \mu$ weakly.
	\end{proof}
	
	In order to develop the gauge machinery from the previous section in the context of actions of amenable groups, we will need to use slightly different techniques, since we do not have access to the Subadditivity Lemma. The main results of the remainder of this section can be summarized as follows.
	\begin{Main results}
		Let $\mathcal{F} = (F_k)_{k = 1}^\infty$ be a right Følner sequence.
		\begin{enumerate}[label=(\alph*)]
			\item Let $(\mathfrak{A}, G, \Theta)$ be a C*-dynamical system, and let $\mathcal{F} = (F_k)_{k = 1}^\infty$ be a right Følner sequence for $G$. Then if $a \in \mathfrak{A}$ is a positive element, then the sequence
			$\left( \left\| \frac{1}{|F_k|} \sum_{g \in F_k} \Theta_g a \right\| \right)_{k = 1}^\infty$ converges to $m \left( a \vert \mathcal{S}^G \right)$.
			\item Let $(\mathfrak{A}, G, \Theta; \iota)$ be a faithful C*-model of $(\mathfrak{M}, \rho, G, \Xi)$. Then the following conditions are related by the implications (i)$\iff$(ii)$\Rightarrow$(iii).
			\begin{enumerate}[label=(\roman*)]
				\item The C*-dynamical system $(\mathfrak{A}, G, \Theta)$ is uniquely ergodic.
				\item The C*-dynamical system $(\mathfrak{A}, G, \Theta)$ is strictly ergodic.
				\item $\Gamma(\iota(a)) = \rho(\iota(a))$ for all positive $a \in \mathfrak{A}$.
			\end{enumerate}
			Further, if $\mathcal{S}^G$ is a simplex, then (iii)$\Rightarrow$(i).
		\end{enumerate}
	\end{Main results}

	\begin{Thm}\label{Gauge exists for C*-dynamical systems}
		Let $(\mathfrak{A}, G, \Theta)$ be a C*-dynamical system, and let $\mathcal{F} = (F_k)_{k = 1}^\infty$ be a right Følner sequence for $G$. Then if $a \in \mathfrak{A}$ is a positive element, then the sequence
		$\left( \left\| \frac{1}{|F_k|} \sum_{g \in F_k} \Theta_g a \right\| \right)_{k = 1}^\infty$ converges to $m \left( a \vert \mathcal{S}^G \right)$.
	\end{Thm}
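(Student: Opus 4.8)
The plan is to follow the structure of the proof of Theorem \ref{Singly generated gauge}, but with the telescoping/subadditivity features special to $\mathbb{Z}$ replaced by a direct appeal to the right Følner property of $\mathcal{F}$. Throughout, abbreviate $\operatorname{Avg}_{F_k}a = \frac{1}{|F_k|}\sum_{g\in F_k}\Theta_g a$; since $a\ge 0$ this is a positive element, so its norm equals $\sup_{\sigma\in\mathcal{S}}\sigma(\operatorname{Avg}_{F_k}a)$, and (as in Theorem \ref{Singly generated gauge}) we may pick a state realizing this supremum. Note $\mathcal{S}^G\neq\emptyset$ by the standing hypotheses of the section, so $m(a\vert\mathcal{S}^G)$ is a genuine real number.

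First I would record the easy inequality: for every $k$ and every $\phi\in\mathcal{S}^G$ we have $\phi(a)=\phi(\operatorname{Avg}_{F_k}a)\le\|\operatorname{Avg}_{F_k}a\|$ because $\phi$ is a state, and taking the supremum over $\phi$ gives $m(a\vert\mathcal{S}^G)\le\|\operatorname{Avg}_{F_k}a\|$ for all $k$; in particular $m(a\vert\mathcal{S}^G)\le\liminf_k\|\operatorname{Avg}_{F_k}a\|$. For the reverse, set $L=\limsup_k\|\operatorname{Avg}_{F_k}a\|$, choose a subsequence $(k_n)_n$ along which $\|\operatorname{Avg}_{F_{k_n}}a\|\to L$, and for each $n$ pick a state $\sigma_n$ on $\mathfrak{A}$ with $\sigma_n(\operatorname{Avg}_{F_{k_n}}a)=\|\operatorname{Avg}_{F_{k_n}}a\|$. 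Put $\omega_n=\sigma_n\circ\operatorname{Avg}_{F_{k_n}}$, a state with $\omega_n(a)=\|\operatorname{Avg}_{F_{k_n}}a\|$. Since $\mathcal{S}$ is weak*-compact and metrizable (by separability of $\mathfrak{A}$), pass to a further subsequence with $\omega_n\to\omega$ weak*; then $\omega(a)=L$.

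The substantive step is to verify that $\omega\in\mathcal{S}^G$, and this is exactly where the \emph{right} Følner hypothesis enters. Fix $g_0\in G$ and $x\in\mathfrak{A}$; using $\Theta_g\Theta_{g_0}=\Theta_{gg_0}$ and reindexing the sum by $h=gg_0$ (so $h$ runs over $F_{k_n}g_0$), the terms indexed by $F_{k_n}g_0\cap F_{k_n}$ cancel and one gets
\begin{align*}
\left|\omega(\Theta_{g_0}x)-\omega(x)\right| &= \lim_{n\to\infty}\left|\frac{1}{|F_{k_n}|}\left(\sum_{h\in F_{k_n}g_0}\sigma_n(\Theta_h x)-\sum_{h\in F_{k_n}}\sigma_n(\Theta_h x)\right)\right| \\
&\le \limsup_{n\to\infty}\frac{|F_{k_n}g_0\,\Delta\,F_{k_n}|}{|F_{k_n}|}\,\|x\| \\
&= 0 ,
\end{align*}
the last equality being the right Følner condition and the middle inequality using $|\sigma_n(\Theta_h x)|\le\|x\|$. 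Hence $\omega$ is $\Theta$-invariant, so $L=\omega(a)\le m(a\vert\mathcal{S}^G)$.

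Putting the pieces together, $m(a\vert\mathcal{S}^G)\le\liminf_k\|\operatorname{Avg}_{F_k}a\|\le\limsup_k\|\operatorname{Avg}_{F_k}a\|=L\le m(a\vert\mathcal{S}^G)$, so the sequence $\left(\|\operatorname{Avg}_{F_k}a\|\right)_{k=1}^\infty$ converges to $m(a\vert\mathcal{S}^G)$ (in particular, the limit does not depend on the choice of right Følner sequence). The only step I expect to require any care is the invariance of the weak*-limit state $\omega$: it replaces the $\mathbb{Z}$-specific telescoping argument of Theorem \ref{Singly generated gauge}, and it is essential here that $g_0$ multiplies the Følner sets on the right, matching the side on which $\Theta_{g_0}$ is applied.
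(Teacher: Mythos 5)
Your proposal is correct and follows essentially the same route as the paper: realize $\left\| \operatorname{Avg}_{F_k} a \right\|$ by a state $\sigma_k$, average it over $F_k$ to get $\omega_k = \sigma_k \circ \operatorname{Avg}_{F_k}$, show every weak*-limit point is invariant via the right Følner condition, and combine with the easy inequality $\phi(a) \leq \left\| \operatorname{Avg}_{F_k} a \right\|$ for $\phi \in \mathcal{S}^G$. The only difference is bookkeeping — you organize the endgame with $\liminf$/$\limsup$, while the paper argues that every subsequential limit of $\left( \omega_k(a) \right)_k$ equals $m \left( a \vert \mathcal{S}^G \right)$ — which is the same argument.
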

	
	\begin{proof}	
		For each $k \in \mathbb{N}$, choose a state $\sigma_k$ on $\mathfrak{A}$ such that
		$$\sigma_k \left( \frac{1}{|F_k|} \sum_{g \in F_k} \Theta_g a \right) = \left\| \frac{1}{|F_k|} \sum_{g \in F_k} \Theta_g a \right\| .$$
		Let $\omega_k = \frac{1}{|F_k|} \sum_{g \in F_k} \sigma_k \circ \Theta_g$, so
		\begin{align*}
			\omega_k(x)	& = \frac{1}{|F_k|} \sum_{g \in F_k} \sigma_k (\Theta_g x) \\
			& = \sigma_k \left( \frac{1}{|F_k|} \sum_{g \in F_k} \Theta_g x \right) , \\
			\omega_k(a)	& = \sigma_k \left( \frac{1}{|F_k|} \sum_{g \in F_k} \Theta_g a \right) \\
			& = \left\| \frac{1}{|F_k|} \sum_{g \in F_k} \Theta_g a \right\| .
		\end{align*}
		This means that in order to show that $\left( \left\| \frac{1}{|F_k|} \sum_{g \in G} \Theta_g a \right\| \right)_{k = 1}^\infty$ converges to $m \left( a \vert \mathcal{S}^G \right)$, it suffices to show that $\omega_k(a) \stackrel{k \to \infty}{\to} m \left( a \vert \mathcal{S}^G \right)$. So for the remainder of this proof, we are going to be looking instead at the sequence $(\omega_k)_{k = 1}^\infty$.
		
		Let $k_1 < k_2 < \cdots$ be some sequence such that $\left( \omega_{k_n} \right)_{n = 1}^\infty$ converges in the weak*-topology to some $\omega$. We claim that $\omega$ is $\Theta$-invariant.	This follows because if $x \in \mathfrak{A}$, then
		\begin{align*}
			\left|\omega_{k_n} \left( \Theta_{g_0} x \right) - \omega_{k_n}(x)\right|	& = \frac{1}{\left|F_{k_n}\right|} \left| \left( \sum_{g \in F_{k_n}} \sigma_{k_n} \left(\Theta_g \Theta_{g_0} x\right) \right) - \left( \sum_{g \in F_{k_n}} \sigma_{k_n} (\Theta_g x) \right) \right| \\
			& = \frac{1}{\left|F_{k_n}\right|} \left| \left( \sum_{g \in F_{k_n}} \sigma_{k_n}(\Theta_{g g_0} x) \right) - \left( \sum_{g \in F_{k_n}} \sigma_{k_n} (\Theta_g x) \right) \right| \\
			& = \frac{1}{|F_{k_n}|} \left| \left( \sum_{g \in F_{k_n} g_0} \sigma_{k_n}(\Theta_g x) \right) - \left( \sum_{g \in F_{k_n}} \sigma_{k_n} (\Theta_g x) \right) \right| \\
			& = \frac{1}{|F_{k_n}|} \left| \left( \sum_{g \in F_{k_n} g_0 \setminus F_{k_n}} \sigma_{k_n}(\Theta_g x) \right) - \left( \sum_{g \in F_{k_n} \setminus F_{k_n} g_0} \sigma_{k_n} (\Theta_g x) \right) \right| \\
			& \leq \frac{1}{|F_{k_n}|} \left( \left(\sum_{g \in F_{k_n} g_0 \setminus F_{k_n}} \|x\| \right) + \left(\sum_{g \in F_{k_n} \setminus F_{k_n} g_0} \|x\| \right) \right) \\
			& = \frac{|F_{k_n} g_0 \Delta F_{k_n}|}{|F_{k_n}|} \| x \| \\
			& \stackrel{n \to \infty}{\to} 0 .
		\end{align*}
		Therefore every limit point $\omega$ of $(\omega_k)_{k = 1}^\infty$ is $\Theta$-invariant, i.e. $\omega \in \mathcal{S}^G$.
		
		To see that $(\omega_k(a)_{k = 1}^\infty = \left( \left\| \frac{1}{|F_k|} \sum_{g \in G} \Theta_g \iota(a) \right\| \right)_{k = 1}^\infty$ converges to $m \left( a \vert \mathcal{S}^G \right)$, it will suffice to show that every limit point $\omega$ of $\left( \omega_k : k \in \mathbb{N} \right)$ satisfies 
		$$\omega \in \mathcal{S}_\mathrm{max}^G (a) .$$
		This follows because if there existed a subsequence $k_1 < k_2 < \cdots$ of $(\omega_k)_{k = 1}^\infty$ such that $\omega_{k_n}(a) \stackrel{n \to \infty}{\to} z \neq m \left( a \vert \mathcal{S}^G \right)$, then by compactness, that subsequence $\left( \omega_{k_n} : n \in \mathbb{N} \right)$ would have some subsequence converging to some $\omega_0$ for which $\omega_0(a) = z \neq m \left( a \vert \mathcal{S}^G \right)$, meaning in particular that $\omega_0 \not \in \mathcal{S}_\mathrm{max}^G(a)$.
		
		So let $k_1 < k_2 < \cdots$ be some sequence such that $\left( \omega_{k_n} \right)_{n = 1}^\infty$ converges in the weak*-topology to some $\omega$. As has already been remarked, we have that $\omega \in \mathcal{S}^G$, so $\omega(a) \leq m \left( a \vert \mathcal{S}^G \right)$. We prove the opposite inequality. Let $\phi \in \mathcal{S}^G$. Then
		\begin{align*}
			\phi(a)	& = \phi \left( \frac{1}{\left| F_{k_n} \right|} \sum_{g \in F_{k_n} } \Theta_g a \right)	& \left( \textrm{$\phi$ is $\Theta$-invariant} \right) \\
			& \leq \left\| \frac{1}{\left| F_{k_n} \right|} \sum_{g \in F_{k_n}} \Theta_g a \right\| \\
			& = \omega_{k_n}(a) \\
			\Rightarrow \phi(a)	& \leq \lim_{n \to \infty} \omega_{k_n}(a) \\
			& = \omega(a) .
		\end{align*}
		Therefore $\omega(a) \geq \sup_{\psi \in \mathcal{S}^G } \psi(a) = m \left( a \vert \mathcal{S}^G \right)$. This establishes the desired identity.
	\end{proof}

	\begin{Rmk}
	An alternate proof of Theorem \ref{Gauge exists for C*-dynamical systems} in the language of nonstandard analysis will appear in an upcoming article \cite{Spinoff}.
	\end{Rmk}

	\begin{Def}
	Given a C*-dynamical system $(\mathfrak{A}, G, \Theta)$, a positive element $a \in \mathfrak{A}$, and a right Følner sequence $\mathcal{F} = (F_k)_{k = 1}^\infty$ for $G$, we define the \emph{gauge} of $a$ to be the limit
	$$\Gamma(x) : = \lim_{k \to \infty} \left\| \frac{1}{|F_k|} \sum_{g \in F_k} \Theta_g x \right\| .$$
	\end{Def}

	Theorem \ref{Gauge exists for C*-dynamical systems} shows that the gauge exists, but Theorem \ref{Amenable gamma identity} demonstrates the way that the gauge interacts with a W*-dynamical system and a C*-model. Moreover, the gauge is dependent only on $(\mathfrak{A}, G, \Theta)$, and independent of the right Følner sequence $\mathcal{F} = (F_k)_{k = 1}^\infty$. As such, even though the gauge as we have described it is computed using a right Følner sequence $\mathcal{F} = (F_k)_{k = 1}^\infty$, we do not need to include $\mathcal{F}$ in our notation for $\Gamma$.
	
	\begin{Cor}\label{Ergodic optimization through non-surjective *-homomorphisms}
		Let $\left( \mathfrak{A} , G , \Theta \right) , \left( \tilde{\mathfrak{A}} , G , \tilde{\Theta} \right)$ be two C*-dynamical systems, and let $\pi : \mathfrak{A} \to \tilde{\mathfrak{A}}$ be a *-homomorphism (not necessarily surjective) such that
		\begin{align*}
			\tilde{\Theta}_g \circ \pi	& = \pi \circ \Theta_g	& (\forall g \in G) .
		\end{align*}
		Let $\tilde{\mathcal{S}}^G$ denote the space of $\tilde{\Theta}$-invariant states on $\tilde{\Theta}$. Then $m \left( \pi(a) \vert \tilde{\mathcal{S}}^G \right) = m \left( a \vert \operatorname{Ann}(\ker \pi) \right)$.
	\end{Cor}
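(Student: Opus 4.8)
The plan is to reduce the non-surjective case to Theorem~\ref{Ergodic optimization through *-homomorphisms} by factoring $\pi$ through its image and then using the gauge to check that cutting down to that image changes nothing. (As is implicit throughout, we take $\pi$ to be unital; only the surjectivity hypothesis of Theorem~\ref{Ergodic optimization through *-homomorphisms} is being relaxed.) First I would record that $\mathfrak{B} := \pi(\mathfrak{A})$ is automatically norm-closed in $\tilde{\mathfrak{A}}$ --- a *-homomorphism of C*-algebras has closed range, since $\mathfrak{A}/\ker\pi$ is complete and the induced map onto $\pi(\mathfrak{A})$ is isometric --- so $\mathfrak{B}$ is a separable unital C*-subalgebra of $\tilde{\mathfrak{A}}$. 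Equivariance gives $\tilde{\Theta}_g\mathfrak{B} = \pi(\Theta_g\mathfrak{A}) = \mathfrak{B}$ for all $g$, so $(\mathfrak{B}, G, \tilde{\Theta}\vert_{\mathfrak{B}})$ is a C*-dynamical system and $\bar{\pi} \colon \mathfrak{A} \twoheadrightarrow \mathfrak{B}$, the map $\pi$ with codomain restricted to its image, is a \emph{surjective} equivariant *-homomorphism with $\ker\bar{\pi} = \ker\pi$. Write $\mathcal{S}^G_{\mathfrak{B}}$ for the space of $\tilde{\Theta}\vert_{\mathfrak{B}}$-invariant states on $\mathfrak{B}$.

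The crux is the identity $m(\pi(a) \vert \tilde{\mathcal{S}}^G) = m(\bar{\pi}(a) \vert \mathcal{S}^G_{\mathfrak{B}})$ for positive $a \in \mathfrak{A}$, and this is where I would invoke the gauge. Each partial average $\frac{1}{|F_k|}\sum_{g \in F_k}\tilde{\Theta}_g\pi(a) = \pi\left(\frac{1}{|F_k|}\sum_{g \in F_k}\Theta_g a\right)$ lies in $\mathfrak{B}$, and the norm of $\tilde{\mathfrak{A}}$ restricts on $\mathfrak{B}$ to the C*-norm of $\mathfrak{B}$; hence the gauge of $\pi(a)$ computed inside $(\tilde{\mathfrak{A}}, G, \tilde{\Theta})$ is literally the same real number as the gauge of $\pi(a)$ computed inside $(\mathfrak{B}, G, \tilde{\Theta}\vert_{\mathfrak{B}})$. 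Applying Theorem~\ref{Gauge exists for C*-dynamical systems} to these two systems separately gives $m(\pi(a) \vert \tilde{\mathcal{S}}^G) = \Gamma(\pi(a)) = m(\pi(a) \vert \mathcal{S}^G_{\mathfrak{B}})$, the identity sought. It then remains only to apply Theorem~\ref{Ergodic optimization through *-homomorphisms} to the surjection $\bar{\pi}$, yielding $m(\pi(a) \vert \mathcal{S}^G_{\mathfrak{B}}) = m(a \vert \operatorname{Ann}(\ker\bar{\pi})) = m(a \vert \operatorname{Ann}(\ker\pi))$; chaining the equalities proves the corollary for positive $a$, and the general self-adjoint case follows by replacing $a$ with $a + \|a\|\,1$, using $m(x + r \vert K) = m(x \vert K) + r$ for every self-adjoint $x$, every real $r$, and every $K \subseteq \mathcal{S}^G$, together with $\pi(a + r) = \pi(a) + r$ and the fact that $\ker\pi$ is unchanged.

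I expect the only genuinely non-formal point to be the middle identity $m(\pi(a) \vert \tilde{\mathcal{S}}^G) = m(\pi(a) \vert \mathcal{S}^G_{\mathfrak{B}})$. The gauge argument above settles it, but one can also see it directly (at the price of using amenability more visibly, and without any separability hypothesis on $\tilde{\mathfrak{A}}$): the inequality $\leq$ is immediate upon restricting a $(\pi(a) \vert \tilde{\mathcal{S}}^G)$-maximizing state to $\mathfrak{B}$, while for $\geq$ one takes a $(\pi(a) \vert \mathcal{S}^G_{\mathfrak{B}})$-maximizing state $\psi$ and applies Theorem~\ref{K-B} to the weak*-compact convex set $K = \{\phi \in \tilde{\mathcal{S}} : \phi\vert_{\mathfrak{B}} = \psi\}$ of state extensions of $\psi$. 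That $K$ is $\tilde{\Theta}$-stable holds precisely because $\mathfrak{B}$ is $\tilde{\Theta}$-invariant and $\psi$ is $\tilde{\Theta}\vert_{\mathfrak{B}}$-invariant, so Theorem~\ref{K-B} supplies an invariant extension $\tilde{\psi} \in K \cap \tilde{\mathcal{S}}^G$ with $\tilde{\psi}(\pi(a)) = \psi(\bar{\pi}(a))$. Either way, the substance of the step is that an \emph{invariant} state on the image subalgebra lifts to an \emph{invariant} state on $\tilde{\mathfrak{A}}$, which is exactly what the Krylov--Bogolyubov theorem is there to provide, much as in Proposition~\ref{Nonempty annihilators}.
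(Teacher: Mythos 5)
Your main argument is exactly the paper's proof: factor $\pi$ through its closed image $\mathfrak{B}=\pi(\mathfrak{A})$, observe that the gauge of $\pi(a)$ is the same whether computed in $\tilde{\mathfrak{A}}$ or in $\mathfrak{B}$, apply Theorem~\ref{Gauge exists for C*-dynamical systems} twice to get $m(\pi(a)\vert\tilde{\mathcal{S}}^G)=m(\pi(a)\vert\mathcal{S}^G_{\mathfrak{B}})$, and finish with Theorem~\ref{Ergodic optimization through *-homomorphisms} applied to the surjection onto $\mathfrak{B}$. Your additional remarks --- the reduction from self-adjoint to positive elements and the alternative Krylov--Bogolyubov extension argument avoiding separability of $\tilde{\mathfrak{A}}$ --- are correct refinements the paper leaves implicit, but the substance of the proof is the same.
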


	\begin{proof}
	Let $\mathfrak{B} = \pi(\mathfrak{A})$, and let $H : G \to \operatorname{Aut}(\mathfrak{B})$ be the action $H_g = \tilde{\Theta}_g \vert_{\mathfrak{B}}$. Let $K$ denote the space of all $H$-invariant states on $\mathfrak{B}$. Then
	\begin{align*}
	m \left( \pi(a) \vert \tilde{\mathcal{S}}^G \right)	& = \Gamma_{ \tilde{\mathfrak{A}} } (\pi(a))	& \mathrm{\left(Theorem \ref{Gauge exists for C*-dynamical systems}\right)}	\\
		& = \Gamma_{ \mathfrak{B} } (\pi(a)) \\ 
		& = m \left( \pi(a) \vert K \right)	& \mathrm{\left(Theorem \ref{Gauge exists for C*-dynamical systems}\right)} \\
		& = m \left( a \vert \operatorname{Ann}(\ker \pi) \right)	& \mathrm{\left(Theorem \ref{Ergodic optimization through *-homomorphisms}\right)} .
	\end{align*}
	
	\end{proof}
	
	\begin{Cor}\label{Amenable gamma identity}
	Let $(\mathfrak{M}, \rho, G, \Xi)$ be a W*-dynamical system, and let $(\mathfrak{A}, G, \Theta; \iota)$ be a C*-model of $(\mathfrak{M}, \rho, \mathbb{Z}, \Xi)$. Then if $a \in \mathfrak{A}$ is a positive element, then
	$$\Gamma(\iota(a)) = m \left( a \vert \operatorname{Ann}(\ker \iota) \right) .$$
	\end{Cor}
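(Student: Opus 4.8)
The plan is to mirror the proof of Corollary~\ref{Gamma estimate}, replacing Theorem~\ref{Singly generated gauge} by its amenable counterpart Theorem~\ref{Gauge exists for C*-dynamical systems}; the argument is purely a matter of assembling results already in hand. (First, one should read the hypothesis with the evident correction that the C*-model is of the W*-dynamical system $(\mathfrak{M}, \rho, G, \Xi)$, so that the phase group throughout is the amenable group $G$ rather than $\mathbb{Z}$.) I would begin by setting $\tilde{\mathfrak{A}} = \iota(\mathfrak{A})$. Since $\iota$ is a $*$-homomorphism of C*-algebras, $\tilde{\mathfrak{A}}$ is a C*-subalgebra of $\mathfrak{M}$ (the image of a $*$-homomorphism is automatically norm-closed), and it is separable because $\mathfrak{A}$ is. By property~(b) of a C*-model, $\Xi_g(\tilde{\mathfrak{A}}) = \tilde{\mathfrak{A}}$ for every $g$, so we may define $\tilde{\Theta}_g := \Xi_g \vert_{\tilde{\mathfrak{A}}}$; since $G$ is discrete, $(\tilde{\mathfrak{A}}, G, \tilde{\Theta})$ is a C*-dynamical system. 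Regarded as a map onto its image, $\iota : \mathfrak{A} \twoheadrightarrow \tilde{\mathfrak{A}}$ is a surjective $*$-homomorphism, and property~(c) of a C*-model gives the equivariance $\tilde{\Theta}_g \circ \iota = \iota \circ \Theta_g$ for all $g \in G$.

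Next I would observe that the gauge $\Gamma(\iota(a))$, defined via the norm of $\mathfrak{M}$, is unchanged if computed inside $\tilde{\mathfrak{A}}$: the Følner averages $\frac{1}{|F_k|} \sum_{g \in F_k} \Xi_g(\iota(a))$ all lie in $\tilde{\mathfrak{A}}$, and $\tilde{\mathfrak{A}}$ sits isometrically inside $\mathfrak{M}$, so the two sequences of norms agree. Applying Theorem~\ref{Gauge exists for C*-dynamical systems} to $(\tilde{\mathfrak{A}}, G, \tilde{\Theta})$ then yields
$$\Gamma(\iota(a)) = m \left( \iota(a) \vert \tilde{\mathcal{S}}^G \right),$$
where $\tilde{\mathcal{S}}^G$ denotes the space of $\tilde{\Theta}$-invariant states on $\tilde{\mathfrak{A}}$. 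Finally, applying Theorem~\ref{Ergodic optimization through *-homomorphisms} to the surjective equivariant $*$-homomorphism $\iota : \mathfrak{A} \twoheadrightarrow \tilde{\mathfrak{A}}$ gives $m \left( \iota(a) \vert \tilde{\mathcal{S}}^G \right) = m \left( a \vert \operatorname{Ann}(\ker \iota) \right)$ --- noting that $\ker \iota$ is the same set whether $\iota$ is regarded as landing in $\mathfrak{M}$ or in $\tilde{\mathfrak{A}}$ --- and chaining the two identities completes the proof.

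There is no serious obstacle here; the only points deserving a word of justification are the two compatibility checks just invoked: that $\iota(\mathfrak{A})$ is genuinely a C*-algebra carrying the restricted $G$-action, and that the norm appearing in the definition of $\Gamma$ is insensitive to whether $\iota(a)$ is viewed inside $\mathfrak{M}$ or inside $\iota(\mathfrak{A})$. As an alternative route one may bypass $\tilde{\mathfrak{A}}$ altogether: apply Theorem~\ref{Gauge exists for C*-dynamical systems} directly to $(\mathfrak{M}, G, \Xi)$ to obtain $\Gamma(\iota(a)) = m \left( \iota(a) \vert \mathcal{S}_{\mathfrak{M}}^G \right)$, and then invoke Corollary~\ref{Ergodic optimization through non-surjective *-homomorphisms} for the (possibly non-surjective) equivariant $*$-homomorphism $\iota : \mathfrak{A} \to \mathfrak{M}$ to rewrite the right-hand side as $m \left( a \vert \operatorname{Ann}(\ker \iota) \right)$; this requires only the remark that the proof of Theorem~\ref{Gauge exists for C*-dynamical systems} uses no separability of the ambient C*-algebra, merely Banach--Alaoglu compactness of its state space.
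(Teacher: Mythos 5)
Your proposal is correct and follows essentially the same route as the paper: pass to $\tilde{\mathfrak{A}} = \iota(\mathfrak{A})$ with the restricted action, use the isometric inclusion to identify $\Gamma_{\mathfrak{M}}(\iota(a))$ with $\Gamma_{\tilde{\mathfrak{A}}}(\iota(a))$, apply Theorem~\ref{Gauge exists for C*-dynamical systems}, and finish with Theorem~\ref{Ergodic optimization through *-homomorphisms}. Your extra compatibility checks (closedness of the image, insensitivity of the norm to the ambient algebra) and the noted correction of the $\mathbb{Z}$/$G$ typo are all consistent with what the paper leaves implicit.
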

	
	\begin{proof}
	Write $\tilde{\mathfrak{A}} = \iota(\mathfrak{A}) \subseteq \mathfrak{M}$, and let $\tilde{\Theta} : G \to \operatorname{Aut} \left( \tilde{\mathfrak{A}} \right)$ be the action $\tilde{ \Theta }_g = \Xi_g \vert_{ \tilde{\mathfrak{A}} }$ obtained by restricting $\Xi$ to $\tilde{\mathfrak{A}}$. Write $\tilde{\mathcal{S}}^G$ for the space of $\tilde{\Theta}$-invariant states on $\tilde{\mathfrak{A}}$.
		
	We know $\Gamma_{\mathfrak{M}}(\iota(a)) = \Gamma_{\tilde{\mathfrak{A}}}(\iota(a))$. By Theorem \ref{Gauge exists for C*-dynamical systems}, we know that $\Gamma_{ \tilde{\mathfrak{A}} }(\iota(a)) = m \left( \iota(a) \vert \tilde{\mathcal{S}}^G \right)$, and by Theorem \ref{Ergodic optimization through *-homomorphisms}, we know that $m \left( \iota(a) \vert \tilde{\mathcal{S}}^G \right) = m \left( a \vert \operatorname{Ann}(\ker \iota) \right)$.
	\end{proof}
	
	This brings us to our characterization of unique ergodicity with respect to the gauge.
	
	\begin{Thm}\label{Amenable strict ergodicity and gauge}
		Let $(\mathfrak{M}, \rho, G, \Xi)$ be a W*-dynamical system, and let $(\mathfrak{A}, G, \Theta; \iota)$ be a faithful C*-model of $(\mathfrak{M}, \rho, G, \Xi)$. Then the following conditions are related by the implications (i)$\iff$(ii)$\Rightarrow$(iii).
		\begin{enumerate}[label=(\roman*)]
			\item The C*-dynamical system $(\mathfrak{A}, G, \Theta)$ is uniquely ergodic.
			\item The C*-dynamical system $(\mathfrak{A}, G, \Theta)$ is strictly ergodic.
			\item $\Gamma(a) = \rho(\iota(a))$ for all positive $a \in \mathfrak{A}$.
		\end{enumerate}
		Further, if $\mathcal{S}^G$ is a simplex, then (iii)$\Rightarrow$(i).
	\end{Thm}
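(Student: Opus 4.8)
The plan is to follow, essentially line for line, the proof of the analogous theorem in Section~\ref{Singly generated UE}, with the amenable gauge machinery (Theorem~\ref{Gauge exists for C*-dynamical systems} and Corollary~\ref{Amenable gamma identity}) substituted for its singly generated counterparts. The organizing observation is that $\rho \circ \iota$ is always a $\Theta$-invariant state on $\mathfrak{A}$: it is a state since $\iota$ is a unital $*$-homomorphism and $\rho$ a state; it is $\Theta$-invariant because $\Xi_g \circ \iota = \iota \circ \Theta_g$ and $\rho \circ \Xi_g = \rho$; and it is moreover \emph{faithful}, because the model is faithful (so $\iota$ is injective, hence isometric) and $\rho$ is faithful on $\mathfrak{M}$. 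In particular $\mathcal{S}^G \neq \emptyset$, and since $\iota$ is isometric we have $\Gamma(a) = \Gamma(\iota(a))$ for positive $a \in \mathfrak{A}$, so the two ways of writing condition (iii) agree.

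For (i)$\Rightarrow$(ii): if $(\mathfrak{A}, G, \Theta)$ is uniquely ergodic, the unique element of $\mathcal{S}^G$ is $\rho \circ \iota$, which is faithful, so the system is strictly ergodic; and (ii)$\Rightarrow$(i) is immediate from the definitions. For (i)$\Rightarrow$(iii): faithfulness of $\iota$ gives $\ker \iota = \{0\}$, hence $\operatorname{Ann}(\ker \iota) = \operatorname{Ann}(\{0\}) = \mathcal{S}^G$, so Corollary~\ref{Amenable gamma identity} reads $\Gamma(\iota(a)) = m\left( a \vert \mathcal{S}^G \right)$; unique ergodicity collapses $\mathcal{S}^G$ to the singleton $\{\rho \circ \iota\}$, whence $m\left( a \vert \mathcal{S}^G \right) = (\rho \circ \iota)(a) = \rho(\iota(a))$.

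The only implication needing the simplex hypothesis is (iii)$\Rightarrow$(i), which I would prove by contraposition. Suppose $\mathcal{S}^G$ is a compact metrizable simplex (compactness and metrizability are automatic from Lemma~\ref{Simplices}(i)) but that the system is not uniquely ergodic, so $\mathcal{S}^G$ has more than one point. By the Krein--Milman theorem $\mathcal{S}^G$ has an extreme point, and since a one-point extreme boundary would force $\mathcal{S}^G$ to be a singleton, there is $\phi \in \partial_e \mathcal{S}^G$ with $\phi \neq \rho \circ \iota$. Corollary~\ref{Uniquely maximizing states} then yields a self-adjoint $a \in \mathfrak{A}$ with $\{\phi\} = \mathcal{S}_\mathrm{max}^G(a)$; replacing $a$ by $a + r1$ for sufficiently large $r > 0$ we may take $a$ positive, as $\psi(a + r1) = \psi(a) + r$ for every state $\psi$ leaves the maximizing set unchanged. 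Now $\Gamma(\iota(a)) = m\left( a \vert \mathcal{S}^G \right) = \phi(a)$ by Corollary~\ref{Amenable gamma identity} together with $\ker \iota = \{0\}$, while unique maximality forces $\rho(\iota(a)) = (\rho \circ \iota)(a) < \phi(a)$; hence $\Gamma(\iota(a)) \neq \rho(\iota(a))$ and (iii) fails, which is the contrapositive of (iii)$\Rightarrow$(i).

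None of these steps is genuinely hard given the groundwork already in place; the real content of the theorem lives in the supporting results, chiefly the existence and identity of the amenable gauge (Theorem~\ref{Gauge exists for C*-dynamical systems}, Corollary~\ref{Amenable gamma identity}) and the exposed-face description of extreme points of a simplex (Corollary~\ref{Uniquely maximizing states}). The only points demanding care are the bookkeeping identifying $\operatorname{Ann}(\ker \iota)$ with $\mathcal{S}^G$ in the faithful case, and checking that $\rho \circ \iota$ genuinely sits inside the simplex $\mathcal{S}^G$ so that the Krein--Milman argument produces an extreme point truly distinct from it; both are routine once stated.
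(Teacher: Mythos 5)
Your proposal is correct and follows essentially the same route as the paper's proof: the same observation that $\rho \circ \iota$ is a faithful invariant state for (i)$\iff$(ii), the same use of the gauge identity $\Gamma = m\left( \cdot \vert \mathcal{S}^G \right)$ for (i)$\Rightarrow$(iii), and the same contrapositive argument via Krein--Milman and Corollary \ref{Uniquely maximizing states} for (iii)$\Rightarrow$(i). Your added bookkeeping (that $\ker \iota = \{0\}$ gives $\operatorname{Ann}(\ker \iota) = \mathcal{S}^G$, and that $\iota$ isometric reconciles $\Gamma(a)$ with $\Gamma(\iota(a))$) is a welcome clarification but not a different method.
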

	
	\begin{proof}
		(i)$\Rightarrow$(ii) Suppose that $(\mathfrak{A}, G, \Theta)$ is uniquely ergodic. Then $\rho \circ \iota$ is an invariant state on $\mathfrak{A}$, so it follows that $\rho \circ \iota$ is the unique invariant state. But $\rho \circ \iota$ is also faithful, so it follows that $(\mathfrak{A}, G, \Theta)$ is strictly ergodic.
		
		(ii)$\Rightarrow$(i) Trivial.
		
		(i)$\Rightarrow$(iii) Suppose that $(\mathfrak{A}, G, \Theta)$ is uniquely ergodic, and let $a \in \mathfrak{A}$ be positive. Let $\phi$ be an $\left( a \vert \mathcal{S}^G \right)$-maximizing state on $\mathfrak{A}$. Then $\phi = \rho \circ \iota$, since both are invariant states and $(\mathfrak{A}, G, \Theta)$ is uniquely ergodic. Then $\phi = \rho \circ \iota$, so $\Gamma(a) = \phi(a) = \rho(\iota(a))$.
		
		(iii)$\Rightarrow$(i) Suppose that $\mathcal{S}^G$ is a simplex, but that $(\mathfrak{A}, G, \Theta)$ is \emph{not} uniquely ergodic. Let $\phi \in \mathcal{S}^G$ be an extreme point of $\mathcal{S}^G$ different from $\rho \circ \iota$. Then by Corollary \ref{Uniquely maximizing states}, there exists $a \in \mathfrak{A}$ self-adjoint such that $\{ \phi \} = \mathcal{S}_\mathrm{max}^G(a)$. We can assume that $a$ is positive, since otherwise we could replace $a$ with $a + r$ for a sufficiently large positive real number $r > 0$, and $\mathcal{S}_\mathrm{max}^\mathbb{Z}(a) = \mathcal{S}_\mathrm{max}^\mathbb{Z}(a + r)$. Then $\Gamma(a) = \phi(a)$. But by the assumption that $\phi$ is uniquely $\left( a \vert \mathcal{S}^G \right)$-maximizing, it follows that $\rho(\iota(a)) < \phi(a)$. Therefore $\Gamma(a) \neq \rho(\iota(a))$, meaning that (iii) does not attain. Thus $\neg$(i)$\Rightarrow \neg$(iii).
	\end{proof}
	
	We also include the following corollary in the setting of topological dynamical systems.
	
	\begin{Cor}
		Let $(X, G, U)$ be a topological dynamical system, and let $\mu$ be a $U$-invariant, strictly positive Borel probability measure on $X$. Then the following conditions are equivalent.
		\begin{enumerate}[label=(\roman*)]
			\item The topological dynamical system $(X, G, U)$ is uniquely ergodic.
			\item The topological dynamical system $(X, G, U)$ is strictly ergodic.
			\item $\Gamma(f) = \int f \mathrm{d} \mu$ for all positive $f \in C(X)$, where $\Gamma$ is defined with respect to the W*-dynamical system $\left( L^\infty(X, \mu) , f \mapsto \int f \mathrm{d} \mu , G, U \right)$.
		\end{enumerate}
	\end{Cor}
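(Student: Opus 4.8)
The plan is to recognize the corollary as a direct specialization of Theorem~\ref{Amenable strict ergodicity and gauge} to the natural commutative C*-model built from $\mu$, the only ingredient not already supplied by that theorem being the observation that the simplex hypothesis needed for the implication (iii)$\Rightarrow$(i) is automatic when the underlying C*-algebra is abelian.

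First I would assemble the data. Set $\mathfrak{M} = L^\infty(X, \mu)$ with $\rho(f) = \int f \,\mathrm{d}\mu$: since $\mathfrak{M}$ is abelian $\rho$ is tracial, it is normal, and it is faithful because $\mu$ is an honest Borel measure, so $(\mathfrak{M}, \rho)$ is a W*-probability space and $\mathcal{L}^2(\mathfrak{M}, \rho) = L^2(X, \mu)$ is separable (as $X$ is compact metrizable). Letting $\Xi_g$ be the automorphism of $\mathfrak{M}$ induced by $U_g$, the $U$-invariance of $\mu$ makes each $\Xi_g$ preserve $\rho$, so $(\mathfrak{M}, \rho, G, \Xi)$ is a W*-dynamical system (with $G$ amenable, as per the standing hypotheses of this section). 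On the C*-side take $\mathfrak{A} = C(X)$, $\Theta_g f = f \circ U_g$, and let $\iota : C(X) \to L^\infty(X, \mu)$ be the canonical map sending a continuous function to its $\mu$-class; conditions (a)--(c) of a C*-model are immediate from the construction. As recorded in the discussion preceding this section's results, $\ker \iota = \{ f \in C(X) : f|_{\operatorname{supp}\mu} = 0 \}$, so $\iota$ is injective \emph{precisely because} $\mu$ is strictly positive; hence $(C(X), G, \Theta; \iota)$ is a faithful C*-model of $(\mathfrak{M}, \rho, G, \Xi)$.

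Next I would invoke Theorem~\ref{Amenable strict ergodicity and gauge} for this model. Its condition (i) (resp.\ (ii)) is the unique ergodicity (resp.\ strict ergodicity) of $(C(X), G, \Theta)$, which translates via Gelfand duality to the unique (resp.\ strict) ergodicity of $(X, G, U)$; and its condition (iii), namely $\Gamma(\iota(f)) = \rho(\iota(f))$ for all positive $f \in C(X)$, is exactly condition (iii) of the corollary, since $\rho(\iota(f)) = \int f \,\mathrm{d}\mu$ and the gauge $\Gamma$ in the corollary is that of $\iota(f)$ computed in $\mathfrak{M}$ --- equivalently in the subalgebra $\iota(C(X))$, since all the translates $\Xi_g \iota(f)$ lie there (this is the same reduction used in Corollary~\ref{Amenable gamma identity}). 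Theorem~\ref{Amenable strict ergodicity and gauge} therefore gives (i)$\iff$(ii)$\Rightarrow$(iii) immediately.

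It remains to obtain (iii)$\Rightarrow$(i), and this is the single place where commutativity is used. I would observe that $(C(X), G, \Theta)$ is $G$-abelian: for any $\phi \in \mathcal{S}^G$ the GNS algebra $\pi_\phi(C(X))$ is already commutative, so the compressions $\{ P_\phi \pi_\phi(f) P_\phi : f \in C(X) \}$ commute \emph{a fortiori}. By the proposition stating that $G$-abelian systems have $\mathcal{S}^G$ a simplex, $\mathcal{S}^G$ is a simplex, so the final clause of Theorem~\ref{Amenable strict ergodicity and gauge} yields (iii)$\Rightarrow$(i), and all three conditions are equivalent. I do not expect a substantive obstacle here; the only steps that need care are establishing that $\ker\iota = \{0\}$ is equivalent to strict positivity of $\mu$, and checking that the $\Gamma$ appearing in the corollary's statement is literally the $\Gamma(\iota(\,\cdot\,))$ used in Theorem~\ref{Amenable strict ergodicity and gauge}.
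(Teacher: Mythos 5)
Your proposal follows the paper's proof essentially verbatim: build the faithful C*-model $(C(X), G, \Theta; \iota)$ over $\left( L^\infty(X,\mu), f \mapsto \int f \, \mathrm{d}\mu, G, \Xi \right)$, note that strict positivity of $\mu$ is exactly what makes $\iota$ injective, and then invoke Theorem \ref{Amenable strict ergodicity and gauge}, using the simplex hypothesis to upgrade to the full equivalence. The one place you diverge is in justifying that $\mathcal{S}^G$ is a simplex, and there your argument has a flaw: you claim that because $\pi_\phi(C(X))$ is commutative, the compressions $P_\phi \pi_\phi(f) P_\phi$ commute \emph{a fortiori}. That inference is false in general --- compressions of commuting operators by a projection need not commute (take two commuting self-adjoint operators on $\mathbb{C}^3$ diagonal in a basis not adapted to a rank-two projection $P$; their upper-left $2\times 2$ blocks generically fail to commute). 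The conclusion you want (abelian systems are $G$-abelian, hence $\mathcal{S}^G$ is a simplex) is a true and standard fact, but its proof goes through the mean-ergodic-theorem characterization of $G$-abelianness via averaged commutators $[\operatorname{Avg}_F a, b]$, not through the compression claim. The paper sidesteps this entirely by citing the classical fact that the $U$-invariant Borel probability measures on a compact metric space form a Choquet simplex. So: same route, correct skeleton, but you should either cite that classical fact directly or supply the genuine argument for $G$-abelianness rather than the \emph{a fortiori} shortcut.
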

	
	\begin{proof}
		This is a special case of Theorem \ref{Amenable strict ergodicity and gauge}. Consider the C*-dynamical system $\left( C(X), G, \Theta \right)$, where $\Theta$ is the action $\Theta_g : f \mapsto f \circ U_g$, and let $\left( L^\infty(X, \mu), f \mapsto \int f \mathrm{d} \mu, G, \Xi \right)$ be the W*-dynamical system arising from the action $\Xi_g : f \mapsto f \circ U_g$. Finally, let $\iota : C(X) \hookrightarrow L^\infty(X, \mu)$ be the embedding $f \mapsto f$. Since $\mu$ is strictly positive, the embedding $\iota$ is injective, making $(C(X), G, \Theta; \iota)$ a faithful C*-model of $\left( L^\infty(X, \mu), f \mapsto \int f \mathrm{d} \mu, G, \Xi \right)$. Finally, the space of $\Theta$-invariant states on $C(X)$ (or equivalently, the space of $U$-invariant Borel probability measures on $X$) is a simplex (a classical fact, see e.g. \cite[Lemma 2]{JenkinsonEvery}). Therefore Theorem \ref{Amenable strict ergodicity and gauge} applies in its strong form.
	\end{proof}
	
	\section*{Acknowledgments}
	
	This paper is written as part of the author's graduate studies. He is grateful to his beneficent advisor, professor Idris Assani, for no shortage of helpful guidance.
	
	\bibliography{Bibliography}
\end{document}